\newcommand{\R}{\mathbb{R}}
\newcommand{\tr}{\mathrm{Tr}}
\newcommand{\N}{\mathbb{N}}
\newcommand{\E}{\mathbb{E}}
\newcommand{\h}{\mathcal{H}}
\renewcommand{\P}{\mathbb{P}}
\newcommand{\Poinca}{\mathcal{P}}
\newcommand{\var}{\mathop{ \rm var}}
\renewcommand{\(}{\left(}
\renewcommand{\)}{\right)}
\newtheorem{defi}{Definition}
\newtheorem{thm}{Theorem}
\newcommand{\bt}{\begin{thm}}
\newcommand{\et}{\end{thm}}
\newtheorem{prop}{Proposition}
\newtheorem{lemma}{Lemma}
\newtheorem{rmk}{Remark}
\newcommand{\BIT}{\begin{itemize}}
\newcommand{\EIT}{\end{itemize}}
\newcommand{\BNUM}{\begin{enumerate}}
\newcommand{\ENUM}{\end{enumerate}}
\newcommand{\BA}{\begin{array}}
\newcommand{\EA}{\end{array}}
\newcommand{\Diag}{\mathop{\rm Diag}}
\newtheorem{assshort}{\bfseries Ass.}
\newcommand{\bas}{\begin{assshort}}
\newcommand{\eas}{\end{assshort}}
\newcommand{\asm}[1]{{\textbf{(Ass.\ \ref{#1})}}}
\newcommand{\C}{\Sigma}
\newcommand{\D}{\Delta}
\newcommand{\Cn}{\widehat{\Sigma}}
\newcommand{\Dn}{\widehat{\Delta}}
\newcommand{\Nla}{\mathcal{N}_\infty(\lambda)}
\newcommand{\Fla}{\mathcal{F}_\infty(\lambda)}
\patchcmd{\endass}{\@endpefalse}{}{}{}
\title{Statistical Estimation of the Poincaré constant and Application to Sampling Multimodal Distributions}
\author[1]{Loucas Pillaud-Vivien}
\author[1]{Francis Bach}
\author[2]{Tony Lelièvre}
\author[1]{Alessandro Rudi}
\author[2]{Gabriel Stoltz}
\affil[1]{INRIA - Ecole Normale Supérieure - PSL Research University}
\affil[2]{Université Paris-Est - CERMICS (ENPC) - INRIA}
\begin{document}

\maketitle

\begin{abstract}
Poincaré inequalities are ubiquitous in probability and analysis and have various applications in statistics (concentration of measure, rate of convergence of Markov chains). The Poincaré constant, for which the inequality is tight, is related to the typical convergence rate of diffusions to their equilibrium measure. In this paper, we show both theoretically and experimentally that, given sufficiently many samples of a measure, we can estimate its Poincaré constant. As a by-product of the estimation of the Poincaré constant, we derive an algorithm that captures a low dimensional representation of the data by finding directions which are difficult to sample. These directions are of crucial importance for sampling or in fields like molecular dynamics, where they are called reaction coordinates. Their knowledge can leverage, with a simple conditioning step, computational bottlenecks by using importance sampling techniques.
\end{abstract}

\section{Introduction}

Sampling is a cornerstone of probabilistic modelling, in particular in the Bayesian framework where statistical inference is rephrased as the estimation of the posterior distribution given the data~\citep{robert2007bayesian,murphy2012machine}: the representation of this distribution through samples is both flexible, as most interesting quantities can be computed from them (e.g., various moments or quantiles), and practical, as there are many sampling algorithms available depending on the various structural assumptions made on the model. Beyond one-dimensional distributions, a large class of these algorithms are iterative and update samples with a Markov chain which eventually converges to the desired distribution, such as Gibbs sampling or Metropolis-Hastings (or more general Markov chain Monte-Carlo algorithms~\citep{gamerman2006markov,gilks1995markov,durmus2017nonasymptotic}) which are adapted to most situations, or Langevin's~algorithm~\citep{durmus2017nonasymptotic,raginsky2017non,Welling2011BayesianLV,Mandt:2017:SGD:3122009.3208015,lelievre_stoltz_2016,bakry2014}, which is adapted to sampling from densities in $\R^d$.

While these sampling algorithms are provably converging in general settings when the number of iterations tends to infinity, obtaining good explicit convergence rates has been a central focus of study, and is often related to the mixing time of the underlying Markov chain~\citep{meyn2012markov}. In particular, for sampling from positive densities in $\R^d$, the Markov chain used in Langevin's algorithm can classically be related to a diffusion process, thus allowing links with other communities such as molecular dynamics~\citep{lelievre_stoltz_2016}. The main objective of molecular dynamics is to infer macroscopic properties of matter from atomistic models via averages with respect to probability measures dictated by the principles of statistical physics. Hence, it relies on high dimensional and highly multimodal probabilistic models.

When the density is log-concave, sampling can be done in polynomial time with respect to the dimension~\citep{ma2018sampling,durmus2017,durmus2017nonasymptotic}. However, in general, sampling with generic algorithms does not scale well with respect to the dimension. Furthermore, the multimodality of the objective measure can trap the iterates of the algorithm in some regions for long durations: this phenomenon is known as metastability. To accelerate the sampling procedure, a common technique in molecular dynamics is to resort to importance sampling strategies where the target probability measure is biased using the image law of the process for some low-dimensional function, known as ``reaction coordinate'' or ``collective variable''. Biasing by this low-dimensional probability measure can improve the convergence rate of the algorithms by several orders of magnitude \citep{Lelievre_2008,Lelievre2013}. Usually, in molecular dynamics, the choice of a good reaction coordinate is based on physical intuition on the model but this approach has limitations, particularly in the Bayesian context \citep{Chopin2012}. There have been efforts to numerically find these reaction coordinates \cite{gkeka2019ml}. Computations of spectral gaps by approximating directly the diffusion operator work well in low-dimensional settings but scale poorly with the dimension. One popular method is based on diffusion maps \citep{COIFMAN20065,Boaz2006,Rohrdanz2011}, for which reaction coordinates are built by approximating the entire infinite-dimensional diffusion operator and selecting its first eigenvectors.

In order to assess or find a reaction coordinate, it is necessary to understand the convergence rate of diffusion processes. We first introduce in Section \ref{sec:PI} Poincaré inequalities and Poincaré constants that control the convergence rate of diffusions to their equilibrium. We then derive in Section \ref{sec:Estimation} a kernel method to estimate it and optimize over it to find good low dimensional representation of the data for sampling in Section \ref{sec:learning_RC}. Finally we present in Section \ref{sec:experiments} synthetic examples for which our procedure is able to find good reaction coordinates.

\paragraph{Contributions.} In this paper, we make the following contributions:
\begin{itemize}
\item We show both theoretically and experimentally that, given sufficiently many samples of a measure, we can estimate its Poincaré constant and thus quantify the rate of convergence of Langevin dynamics. 
\item By finding projections whose marginal laws have the largest Poincaré constant, we derive an algorithm that captures a low dimensional representation of the data. This knowledge of ``difficult to sample directions'' can be then used to accelerate dynamics to their equilibrium measure.
\end{itemize}

\section{Poincaré Inequalities }
\label{sec:PI}

\subsection{Definition}

We introduce in this part the main object of this paper which is the Poincaré inequality \citep{bakry2014}. Let us consider a probability measure $d\mu$ on $\R^d$ which has a density with respect to the Lebesgue measure. Consider $H^1(\mu)$ the space of functions in $L^2(\mu)$ (i.e., which are square integrable) that also have all their first order derivatives in $L^2$, that is, $H^1(\mu) = \{f \in L^2(\mu),\ \int_{\R^d}f^2 d\mu+\int_{\R^d}\|\nabla f\|^2 d\mu < \infty \}$.

\begin{defi} [Poincaré inequality and Poincaré constant]
\label{defi:PI}
The Poincaré constant of the probability measure $d\mu$ is the smallest constant $\Poinca_{\mu}$ such that for all $f \in H^1(\mu) $ the following Poincaré inequality  \textbf{(PI)} holds:
\begin{align}
\label{eq:Poicare_constant}
\int_{\R^d}f(x)^2 d\mu(x) - &\left(\int_{\R^d}f(x) d\mu(x)\right)^2 \leqslant \Poinca_{\mu} \int_{\R^d}\|\nabla f(x)\|^2 d\mu(x).
\end{align}
\end{defi}

In Definition \ref{defi:PI} we took the largest possible and the most natural functional space $ H^1(\mu)$ for which all terms make sense, but Poincaré inequalities can be equivalently defined for subspaces of test functions $\h$ which are dense in $ H^1(\mu)$. This will be the case when we derive the estimator of the Poincaré constant in Section \ref{sec:Estimation}.

\begin{rmk}[A probabilistic formulation of the Poincaré inequality.]
Let $X$ be a random variable distributed according to the probability measure $d\mu$. \textbf{(PI)} can be reformulated as: for~all~$f \in H^1(\mu)$,
\begin{align}
\mathrm{Var}_\mu\,(f(X)) \leqslant \Poinca_{\mu}\,\E_\mu \left[\,\|\nabla f(X)\|^2\right].
\end{align}
Poincaré inequalities are hence a way to bound the variance from above by the so-called \textit{Dirichlet energy} $ \E \left[\,\|\nabla f(X)\|^2\right]$ (see \emph{\citep{bakry2014}}).
\end{rmk}

\subsection{Consequences of \textbf{(PI)}: convergence rate of diffusions}

Poincaré inequalities are ubiquitous in various domains such as probability, statistics or partial differential equations (PDEs). For example, in PDEs they play a crucial role for showing the existence of solutions of Poisson equations or Sobolev embeddings \citep{Gilbarg2001}, and they lead in statistics to concentration of measure results \citep{gozlan2010}. In this paper, the property that we are the most interested in is the convergence rate of diffusions to their stationary measure $d\mu$. In this section, we consider a very general class of measures: $d\mu (x) = \mathrm{e}^{-V(x)}dx$ (called Gibbs measures with potential $V$), which allows for a clearer explanation. Note that all measures admitting a positive density can be written like this and are typical in Bayesian machine learning \citep{robert2007bayesian} or molecular dynamics \citep{lelievre_stoltz_2016}. Yet, the formalism of this section can be extended  to more general cases \citep{bakry2014}.

Let us consider the overdamped Langevin diffusion in $\R^d$, that is the solution of the following stochastic differential equation (SDE):
\begin{align}
\label{eq:langevin}
\mathrm{d}X_t = -\nabla V (X_t) \mathrm{d}t + \sqrt{2}\,\mathrm{d} B_t,
\end{align}
where $(B_t)_{t\geqslant0}$ is a $d$-dimensional Brownian motion. It is well-known \citep{bakry2014} that the law of $(X_t)_{t\geqslant0}$ converges to the Gibbs measure $d\mu$ and that the Poincaré constant controls the rate of convergence to equilibrium in $L^2(\mu)$. Let us denote by $P_t (f) $ the Markovian semi-group associated with the Langevin diffusion $(X_t)_{t\geqslant0}$. It is defined in the following way: $P_t (f) (x) = \mathbb{E}[f(X_t)| X_0 = x]$. This semi-group satisfies the dynamics 
$$ \frac{d } {dt} P_t (f)= \mathcal{L} P_t (f), $$
where $\mathcal{L} \phi = \Delta^L \phi - \nabla V \cdot \nabla \phi$ is a differential operator called the infinitesimal generator of the Langevin diffusion \eqref{eq:langevin} ($\Delta^L$ denotes the standard Laplacian on $\R^d$). Note that by integration by parts, the semi-group $(P_t)_{t \geqslant 0}$ is reversible with respect to $d\mu$, that is: $-\int f(\mathcal{L}g)\,d\mu = \int \nabla f \cdot \nabla  g\, d\mu = -\int (\mathcal{L}f)g\,d\mu$. Let us now state a standard convergence theorem (see e.g.~\citep[Theorem 2.4.5]{bakry2014} ), which proves that $\Poinca_\mu$ is the characteristic time of the exponential convergence of the diffusion to equilibrium in~$L^2(\mu)$.
\begin{thm}[Poincaré and convergence to equilibrium]
\label{thm:Poinca_diffusions}
With the notation above, the following statements are equivalent:
\begin{enumerate}[label=$(\roman{*})$]
\item $\mu$ satisfies a Poincaré inequality with constant $\Poinca_\mu$;
\item For all $f$ smooth and compactly supported, $\mathrm{Var}_\mu (P_t (f)) \leqslant \mathrm{e}^{-2t / \Poinca_\mu} \mathrm{Var}_\mu (f)$ for all $t \geqslant 0$.
\end{enumerate}
\end{thm}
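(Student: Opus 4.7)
The plan is to exploit the classical identity linking the time derivative of the variance under the semigroup to the Dirichlet energy, and then close the loop via Grönwall's lemma in one direction, and by differentiating at $t=0$ in the other.

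First I would set up the key computation. Since $d\mu$ is invariant for $(P_t)_{t \geq 0}$, the mean $\int P_t f\, d\mu = \int f\, d\mu$ is preserved, so
\begin{align*}
\mathrm{Var}_\mu(P_t f) = \int (P_t f)^2\, d\mu - \left(\int f\, d\mu\right)^2.
\end{align*}
Differentiating in $t$, using $\frac{d}{dt}P_t f = \mathcal{L}P_t f$, and then applying the reversibility identity $-\int g\,\mathcal{L}g\, d\mu = \int \|\nabla g\|^2 d\mu$ recalled just before the theorem, yields
\begin{align*}
\frac{d}{dt}\mathrm{Var}_\mu(P_t f) = 2\int (P_t f)\, \mathcal{L}(P_t f)\, d\mu = -2 \int \|\nabla P_t f\|^2\, d\mu.
\end{align*}
This is the workhorse identity for both implications.

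For $(i) \Rightarrow (ii)$, I would apply the Poincaré inequality to $g = P_t f$ (here we use that $P_t$ maps smooth compactly supported $f$ into $H^1(\mu)$ by the regularizing properties of the Langevin semigroup). This gives $\mathrm{Var}_\mu(P_t f) \leq \Poinca_\mu \int \|\nabla P_t f\|^2 d\mu$, which combined with the identity above reads
\begin{align*}
\frac{d}{dt}\mathrm{Var}_\mu(P_t f) \leq -\frac{2}{\Poinca_\mu}\,\mathrm{Var}_\mu(P_t f).
\end{align*}
Grönwall's lemma then produces the exponential decay $\mathrm{Var}_\mu(P_t f) \leq e^{-2t/\Poinca_\mu}\mathrm{Var}_\mu(f)$.

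For $(ii) \Rightarrow (i)$, I would differentiate the inequality $\mathrm{Var}_\mu(P_t f) \leq e^{-2t/\Poinca_\mu}\mathrm{Var}_\mu(f)$ at $t = 0$. Since both sides equal $\mathrm{Var}_\mu(f)$ at $t=0$, comparing right derivatives yields $-2\int \|\nabla f\|^2\, d\mu \leq -\frac{2}{\Poinca_\mu}\mathrm{Var}_\mu(f)$, which rearranges to \textbf{(PI)} for every smooth compactly supported $f$. A standard density argument then extends the inequality to all $f \in H^1(\mu)$.

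The main obstacle I expect is not the formal manipulations, which are short, but the functional-analytic regularity needed to justify them: namely, that $P_t f$ remains smooth enough (in particular in $H^1(\mu)$) to legitimately insert it into \textbf{(PI)}, that the time derivative above can be exchanged with the integral, and that one-sided differentiation at $t=0$ yields the stated Dirichlet energy. These facts rely on the smoothing action of the Langevin semigroup and on the density of smooth compactly supported functions in $H^1(\mu)$; I would invoke the corresponding standard results from \citep{bakry2014} rather than reprove them.
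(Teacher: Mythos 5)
Your proposal is correct and follows the same route as the paper: the identity $\frac{d}{dt}\mathrm{Var}_\mu(P_t f) = -2\int \|\nabla P_t f\|^2 d\mu$, then Grönwall for $(i)\Rightarrow(ii)$ and a one-sided derivative of the variance bound at $t=0$ for $(ii)\Rightarrow(i)$. Your explicit remarks on regularity of $P_t f$ and the density argument extending \textbf{(PI)} from smooth compactly supported functions to $H^1(\mu)$ are sound additions that the paper leaves implicit.
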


\begin{proof}
The proof is standard. Note that upon replacing $f$ by $f-\int\!f d\mu$, one can assume that $\int\!f d\mu = 0$. Then, for all $t \geqslant 0$,
\begin{align*}
\label{eq:variance_poinca}
\frac{d}{dt}\mathrm{Var}_\mu (P_t (f)) &= \frac{d}{dt}\int(P_t (f))^2 d\mu = 2 \int P_t (f) (\mathcal{L} P_t (f)) d\mu = -2 \int \|\nabla P_t (f)\|^2  d\mu \tag{$\ast$}
\end{align*}

Let us assume $(i)$. With equation \eqref{eq:variance_poinca}, we have $$ \frac{d}{dt}\mathrm{Var}_\mu (P_t (f)) = -2 \int \|\nabla P_t (f)\|^2  d\mu \leqslant -2\, \Poinca_\mu ^{-1}\int(P_t (f))^2 d\mu = -2\, \Poinca_\mu ^{-1} \mathrm{Var}_\mu (P_t (f)). $$ The proof is then completed by using Grönwall's inequality.

Let us assume $(ii)$. We write, for $t > 0$,
\begin{align*}
&-t^{-1}(\mathrm{Var}_\mu (P_t (f)) - \mathrm{Var}_\mu (f)) \geqslant -t^{-1}(\mathrm{e}^{-2t / \Poinca_\mu} - 1)\mathrm{Var}_\mu (f).
\end{align*}
By letting $t$ go to $0$ and using equation \eqref{eq:variance_poinca},
\begin{align*}
   2 \Poinca_\mu^{-1} \mathrm{Var}_\mu (f) &\leqslant \frac{d}{dt}\mathrm{Var}_\mu (P_t (f))_{t=0}  = 2 \int \|\nabla f \|^2 d\mu,
\end{align*}
which shows the converse implication.
\end{proof}

\begin{rmk}
Let $f$ be a centered eigenvector of $-\mathcal{L}$ with eigenvalue $\lambda \neq 0$. By the Poincaré inequality,
\begin{align*} 
\int f^2d\mu \leqslant \Poinca_\mu \int \|\nabla f\|^2 d\mu &= \Poinca_\mu \int  f(-\mathcal{L}f) d\mu = \Poinca_\mu \lambda \int  f^2 d\mu,
\end{align*}
from which we deduce that every non-zero eigenvalue of $-\mathcal{L}$ is larger that $1/\Poinca_\mu$.  The best Poincaré constant is thus the inverse of the smallest non zero eigenvalue of $-\mathcal{L}$. The finiteness of the Poincaré constant is therefore equivalent to a \emph{spectral gap} property of $-\mathcal{L}$. Similarly, a discrete space Markov chain with transition matrix $P$ converges at a rate determined by the spectral gap of $I-P$.  
\end{rmk}

There have been efforts in the past to estimate spectral gaps of Markov chains \citep{hsu2015mixing,levin2016estimating,qin2019estimating,wolfer2019estimating,combes2019computationally} but these have been done with samples from trajectories of the dynamics. The main difference here is that the estimation will only rely on samples from the stationary measure.

\paragraph{Poincaré constant and sampling.} In high dimensional settings (in Bayesian machine learning~\citep{robert2007bayesian}) or molecular dynamics \citep{lelievre_stoltz_2016} where~$d$ can be large -- from~$100$ to~$10^7$), one of the standard techniques to sample $d\mu(x) = \mathrm{e}^{-V(x)}dx$ is to build a Markov chain by discretizing in time the overdamped Langevin diffusion \eqref{eq:langevin} whose law converges to $d\mu$. According to Theorem~\ref{thm:Poinca_diffusions}, the typical time to wait to reach equilibrium is~$\Poinca_\mu$. Hence, the larger the Poincaré constant of a probability measure $d\mu$ is, the more difficult the sampling of $d\mu$ is. Note also that~$V$ need not be convex for the Markov chain to converge.

\subsection{Examples} 
\label{subsec:examples}

\paragraph{Gaussian distribution.} For the Gaussian measure on $\R^d$ of mean $0$ and variance $1$: $d\mu (x) = \frac{1}{(2\pi)^{d/2}} \mathrm{e}^{-\|x\|^2/2} dx$, it holds for all $f$ smooth and compactly supported,
\begin{align*}
\mathrm{Var}_{\mu}(f) \leqslant \int_{\mathbb{R}^d} \|\nabla f\|^2 d\mu, 
\end{align*}
and one can show that $\Poinca_\mu = 1$ is the optimal Poincaré constant (see \citep{chernoff1981}). More generally, for a Gaussian measure with covariance matrix $\Sigma$, the Poincaré constant is the spectral radius of $\Sigma$.

Other examples of analytically known Poincaré constant are $1/d$ for the uniform measure on the unit sphere in dimension $d$ \citep{Ledoux2014} and $4$ for the exponential measure on the real line \citep{bakry2014}. There also exist various criteria to ensure the existence of \textbf{(PI)}. We will not give an exhaustive list as our aim is rather to emphasize the link between sampling and optimization. Let us however finish this part with particularly important results.

\paragraph{A measure of non-convexity.} Let $d\mu(x) = \mathrm{e}^{-V(x)}dx$.  It has been shown in the past decades that the ``more convex'' $V$ is, the smaller the Poincaré constant is. Indeed, if $V$ is $\rho$-strongly convex, then the Bakry-Emery criterion \citep{bakry2014} tells us that $\Poinca_\mu \leqslant 1/\rho$. If $V$ is only convex, it has been shown that $d\mu$ satisfies also a \textbf{(PI)} (with a possibly very large Poincaré constant) \citep{Kannan1995,Bobkov1995}. Finally, the case where $V$ is non-convex is explored in detail in a one-dimensional setting and it is shown that for potentials $V$ with an energy barrier of height $h$ between two wells, the Poincaré constant explodes exponentially with respect the height $h$ \citep{menz2014}. In that spirit, the Poincaré constant of $d\mu(x) = \mathrm{e}^{-V(x)}dx$ can be a quantitative way to quantify how multimodal the distribution $d\mu$ is and hence how non-convex the potential $V$ is \citep{jain2017non,raginsky2017non}.

\section{Statistical Estimation of the Poincaré Constant}
\label{sec:Estimation}

The aim of this section is to provide an estimator of the Poincaré constant of a measure $\mu$ when we only have access to $n$ samples of it, and to study its convergence properties. More precisely, given $n$ independent and identically distributed (i.i.d.) samples $(x_1,\hdots,x_n)$ of the probability measure $d\mu$, our goal is to estimate $\Poinca_{\mu}$. We will denote this estimator (function of $(x_1,\hdots,x_n)$) by the standard notation $\widehat{\Poinca}_\mu$.

\subsection{Reformulation of the problem in a reproducing kernel Hilbert Space}

\paragraph{Definition and first properties.} Let us suppose here that the space of test functions of the \textbf{(PI)}, $\h$, is a reproducing kernel Hilbert space (RKHS) associated with a kernel $K$ on $\R^d$ \citep{smola-book,Cristianini2004}. This has two important consequences:
\begin{enumerate}
\item $\h$ is the linear function space $\h = \mathrm{span}\{ K(\cdot,x), \ x \in \R^d \}$, and in particular, for all $ x \in \R^d$, the function $y \mapsto K(y,x)$ is an element of $\h$ that we will denote by $K_x$.
\item The reproducing property: $\forall f \in \h$ and $\forall x \in \R^d$, $f(x) = \langle f,K(\cdot,x) \rangle_\h$. In other words, function evaluations are equal to dot products with canonical elements of the RKHS.
\end{enumerate}
We make the following mild assumptions on the RKHS:
\bas\label{asm:density} 
 \hspace*{.2cm}  The RKHS $\h$ is dense in $ H^1(\mu)$.
\eas
Note that this is the case for most of the usual kernels: Gaussian, exponential \citep{micchelli2006universal}. As \textbf{(PI)} involves derivatives of test functions, we will also need some regularity properties of the RKHS. Indeed, to represent $\nabla f$ in our RKHS we need a partial derivative reproducing property of the kernel space.

\bas\label{asm:regularity_RKHS} 
 \hspace*{.2cm}  $K$ is a Mercer kernel such that $K \in  C^2(\R^d \times \R^d)$.
\eas
Let us denote by $\partial_i = \partial_{x^i}$ the partial derivative operator with respect to the $i$-th component of $x$. It has been shown \citep{Zhou2008} that under assumption \asm{asm:regularity_RKHS}, $\forall i \in \llbracket1, d\rrbracket$, $\partial_i K_x \in \h$ and that a partial derivative reproducing property holds true: $\forall f \in \h$ and $\forall x \in \R^d$, $\partial_i f(x) = \langle \partial_i K_x  , f \rangle_\h $. Hence, thanks to assumption \asm{asm:regularity_RKHS}, $\nabla f$ is easily represented in the RKHS. We also need some boundedness properties of the kernel. 
\bas\label{asm:bounded_kernel} 
 \hspace*{.2cm}  $K$ is a kernel such that $\forall x \in~\R^d, \,K (x,x) \leqslant \mathcal{K}$ and\footnote{The subscript $d$ in $\mathcal{K}_d$ accounts for the fact that this quantity is expected to scale linearly with $d$ (as is the case for the Gaussian kernel).} $ \left\|\nabla K_x\right\|^2 \leqslant \mathcal{K}_d$, where $\left\|\nabla K_x \right\|^2: = \sum_{i=1}^d\langle \partial_i K_x, \partial_i K_x \rangle = \sum_{i=1}^d \frac{\partial^2 K}{\partial x^i \partial y^i} (x,x)$ (see calculations below), $x$ and $y$ standing respectively for the first and the second variables of $(x,y) \mapsto K(x,y)$.
\eas
The equality mentioned in the expression of $\|\nabla K_x\|^2$ arises from the following computation: $\partial_i K_y (x) = \langle \partial_i K_y, K_x \rangle = \partial_{y^i} K (x,y) $ and we can write that for all $x, y \in \R^d$, $\langle \partial_i K_x, \partial_i K_y \rangle = \partial_{x^i} \left( \partial_i K_y (x) \right) = \partial_{x^i} \partial_{y^i}  K(x,y)  $. Note that, for example, the Gaussian kernel satisfies \asm{asm:density}, \asm{asm:regularity_RKHS}, \asm{asm:bounded_kernel}.
\paragraph{A spectral point of view.} Let us define the following operators from $\h$ to $\h$: 
\begin{align*}
\C &= \E \left[K_x \otimes K_x \right],\hspace*{1.5cm} \D = \E \left[\nabla K_x \otimes_d \nabla K_x \right],
\end{align*}
and their empirical counterparts,
\begin{align*}
 \Cn  = \frac{1}{n} \sum_{i=1}^n K_{x_i} \otimes K_{x_i}, \hspace*{0.5cm} \Dn  = \frac{1}{n} \sum_{i=1}^n \nabla K_{x_i} \otimes_d \nabla K_{x_i},
\end{align*}
where $\otimes$ is the standard tensor product: $\forall f,g, h \in \h$, $(f \otimes g) (h) = \langle g, h \rangle_{_\h} f$ and $\otimes_d$ is defined as follows:  $\forall f,g \in \h^d$ and $h \in \h$, $(f \otimes_d g) (h) = \sum_{i=1}^d \langle g_i, h\rangle_{_\h} f_i $.

\begin{prop}[Spectral characterization of the Poincaré constant]
\label{prop:Spectral_characterization}
Suppose that assumptions \asm{asm:density}, \asm{asm:regularity_RKHS}, \asm{asm:bounded_kernel} hold true. Then the Poincaré constant $\Poinca_\mu$ is the maximum of the following Rayleigh ratio:
\begin{align}
\label{eq:spectral_poinca}
\Poinca_\mu = \sup_{f \in \h \setminus  \mathrm{Ker}(\Delta) } \frac{\langle f,C f \rangle_\mathcal{H}}{\langle f, \D f \rangle_\mathcal{H} } = \left\|\D^{-1/2} C \D^{-1/2}\right\|,
\end{align}
with $\|\cdot\|$ the operator norm on $\h$ and $C = \Sigma - m \otimes m$ where $ m = \int_{\R^d} K_x d\mu(x) \in \h $ is the covariance operator, considering $\Delta^{-1}$ as the inverse of $\Delta$ restricted to $\left(\mathrm{Ker} (\Delta) \right)^\perp$.
\end{prop}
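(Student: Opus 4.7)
The plan is to translate the Poincaré inequality into an operator inequality on $\h$ via the reproducing property, and then to identify the best constant with a Rayleigh ratio. I proceed in four steps.

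First, I would use the reproducing property to rewrite the two sides of \textbf{(PI)} as quadratic forms on $\h$. For $f \in \h$, we have $f(x) = \langle f, K_x \rangle_\h$, hence $f(x)^2 = \langle f, (K_x \otimes K_x) f\rangle_\h$; taking expectations yields $\int f^2 d\mu = \langle f, \Sigma f\rangle_\h$. Likewise $\int f\, d\mu = \langle f, m\rangle_\h$, so $\left(\int f\, d\mu\right)^2 = \langle f, (m \otimes m) f\rangle_\h$, and therefore $\mathrm{Var}_\mu(f) = \langle f, Cf\rangle_\h$. Using the partial derivative reproducing property granted by \asm{asm:regularity_RKHS}, $\partial_i f(x) = \langle f, \partial_i K_x\rangle_\h$, the same manipulation gives $\int \|\nabla f\|^2 d\mu = \langle f, \Delta f\rangle_\h$.

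Second, I would use \asm{asm:density} to argue that the optimal constant in \textbf{(PI)} is unchanged when one restricts the test space to $\h$. Indeed, given $f \in H^1(\mu)$, by density there exists a sequence $(f_n) \subset \h$ with $f_n \to f$ in $H^1(\mu)$, so both $\mathrm{Var}_\mu(f_n) \to \mathrm{Var}_\mu(f)$ and $\int \|\nabla f_n\|^2 d\mu \to \int \|\nabla f\|^2 d\mu$, and the inequality passes to the limit. Hence
\begin{align*}
\Poinca_\mu \;=\; \sup_{f \in \h,\ \langle f, \Delta f\rangle > 0} \frac{\langle f, Cf\rangle_\h}{\langle f, \Delta f\rangle_\h}.
\end{align*}

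Third, I would characterize the excluded set. Since $\Delta$ is bounded (by \asm{asm:bounded_kernel}) and positive semi-definite, $\langle f, \Delta f\rangle = 0$ iff $f \in \mathrm{Ker}(\Delta)$, iff $\nabla f = 0$ $\mu$-a.s., iff $f$ is $\mu$-a.s.\ constant. This in turn forces $\mathrm{Var}_\mu(f) = \langle f, Cf\rangle = 0$, so $\mathrm{Ker}(\Delta) \subset \mathrm{Ker}(C)$ and the Rayleigh ratio is well-defined (as a $0/0$ form we exclude) on $\h \setminus \mathrm{Ker}(\Delta)$, giving the first equality in \eqref{eq:spectral_poinca}.

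Finally, I would pass to the operator-norm expression. On $(\mathrm{Ker}\,\Delta)^\perp$, the operator $\Delta$ is self-adjoint and strictly positive, so by spectral calculus $\Delta^{1/2}$ and its generalized inverse $\Delta^{-1/2}$ (defined as the inverse of $\Delta^{1/2}$ on its range, extended by $0$ on $\mathrm{Ker}\,\Delta$) are well-defined. For $f \in (\mathrm{Ker}\,\Delta)^\perp$ in the range of $\Delta^{1/2}$, set $g = \Delta^{1/2} f$; using $\mathrm{Ker}\,\Delta \subset \mathrm{Ker}\,C$ to justify the manipulation, one obtains
\begin{align*}
\frac{\langle f, C f\rangle_\h}{\langle f, \Delta f\rangle_\h} \;=\; \frac{\langle g, \Delta^{-1/2} C \Delta^{-1/2} g\rangle_\h}{\|g\|_\h^2}.
\end{align*}
Because the range of $\Delta^{1/2}$ is dense in $(\mathrm{Ker}\,\Delta)^\perp$ and $\Delta^{-1/2} C \Delta^{-1/2}$ is self-adjoint and positive, taking the supremum of the right-hand side over $g \neq 0$ yields the operator norm $\|\Delta^{-1/2} C \Delta^{-1/2}\|$.

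The main obstacle is the last step: one must handle carefully the possibility that $\Delta$ has dense but not closed range, which makes $\Delta^{-1/2}$ unbounded. The correct reading is that $\Poinca_\mu < \infty$ if and only if the symmetric operator $\Delta^{-1/2} C \Delta^{-1/2}$ extends to a bounded operator, in which case its norm coincides with the supremum of the Rayleigh ratio; otherwise both sides equal $+\infty$. Care is also needed in step two to ensure the density in \asm{asm:density} is with respect to the $H^1(\mu)$-norm (not merely $L^2(\mu)$), which is what makes the gradient term pass to the limit.
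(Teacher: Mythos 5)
Your proposal is correct and follows essentially the same route as the paper's proof: rewrite variance and Dirichlet energy as the quadratic forms $\langle f, C f\rangle_\h$ and $\langle f, \Delta f\rangle_\h$ via the (derivative) reproducing property, use \asm{asm:density} to restrict the supremum from $H^1(\mu)$ to $\h$, note $\mathrm{Ker}(\Delta)\subset\mathrm{Ker}(C)$, and identify the Rayleigh ratio with $\|\Delta^{-1/2}C\Delta^{-1/2}\|$. The only difference is that you spell out the last identification (substitution $g=\Delta^{1/2}f$, density of $\mathrm{Ran}(\Delta^{1/2})$ in $(\mathrm{Ker}\,\Delta)^\perp$, and the convention when the operator is unbounded) in more detail than the paper, which simply asserts it.
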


Note that $C$ and $\D$ are symmetric positive semi-definite trace-class operators (see Appendix~\ref{subsec:operators}). Note also that $\mathrm{Ker} (\Delta)$ is the set of constant functions, which suggests introducing $\h_0 := (\mathrm{Ker} (\Delta))^\perp = \h \cap L^2_0(\mu)$, where $L^2_0(\mu)$ is the space of $L^2(\mu)$ functions with mean zero with respect to $\mu$. Finally note that $\mathrm{Ker} (\Delta) \subset \mathrm{Ker} (C)$ (see Section \ref{sec:proofs_of_prop_1/2} of the Appendix). With the characterization provided by Proposition \ref{prop:Spectral_characterization}, we can easily define an estimator of the Poincaré constant~$\widehat{\Poinca}_\mu$, following standard regularization techniques from kernel methods \citep{smola-book,Cristianini2004,fukumizu2007statistical}.

\begin{defi}
The estimator $\widehat{\Poinca}_\mu^{n,\lambda}$ of the Poincaré constant is the following:
\begin{align}
\label{eq:Empirical_rayleigh_ratio}
\widehat{\Poinca}_\mu^{n,\lambda} := \sup_{f \in \h \setminus \mathrm{Ker} (\Delta)} \frac{\langle f,\widehat{C} f \rangle_{\mathcal{H}}}{\langle f, (\widehat{\D} + \lambda I) f \rangle_{\mathcal{H}} } = \left\|\widehat{\D}_\lambda^{-1/2} \widehat{C} \widehat{\D}_\lambda^{-1/2}\right\|,
\end{align}
with $\widehat{C} = \Cn - \widehat{m} \otimes \widehat{m}$ and where $ \widehat{m} = \frac{1}{n}\sum_{i=1}^n K_{x_i}$. $\widehat{C}$~is the empirical covariance operator and $\widehat{\D}_\lambda = \widehat{\D} + \lambda I$ is a regularized empirical version of the operator $\D$ restricted to $\left(\mathrm{Ker} (\Delta) \right)^\perp$ as in Proposition~\ref{prop:Spectral_characterization}. 
\end{defi}
Note that regularization is necessary as the nullspace of $\widehat{\Delta}$ is no longer included in the nullspace of $\widehat{C}$ so that the Poincaré constant estimates blows up when $\lambda \to 0$. The problem in Equation \eqref{eq:Empirical_rayleigh_ratio} has a natural interpretation in terms of Poincaré inequality as it corresponds to a regularized \textbf{(PI)} for the empirical measure $\widehat{\mu}_n = \frac{1}{n} \sum_{i=1}^n \delta_{x_i}$ associated with the i.i.d. samples $x_1,\hdots,x_n$ from $d\mu$. To alleviate the notation, we will simply denote the estimator by $\widehat{\Poinca}_\mu$ until the end of the paper.

\subsection{Statistical consistency of the estimator}

We show that, under some assumptions and by choosing carefully $\lambda$ as a function of $n$, the estimator~$\widehat{\Poinca}_\mu$ is statistically consistent, i.e., almost surely: 
$$\widehat{\Poinca}_\mu \xrightarrow{n \rightarrow \infty} \Poinca_\mu.$$
As we regularized our problem, we prove the convergence in two steps: first, the convergence of $\widehat{\Poinca}_\mu$ to the regularized problem $\Poinca^\lambda_\mu = \sup_{f \in \h \setminus \{0\}} \frac{\langle f,C f \rangle}{\langle f, (\D + \lambda I) f \rangle } = \|\D_\lambda^{-1/2} C \D_\lambda^{-1/2}\|$, which corresponds to controlling the statistical error associated with the estimator $\widehat{\Poinca}_\mu$ (variance); second, the convergence of $\Poinca^\lambda_\mu$ to $\Poinca_\mu$ as $\lambda$ goes to zero which corresponds to the bias associated with the estimator $\widehat{\Poinca}_\mu$. The next result states the statistical consistency of the estimator when $\lambda$ is a sequence going to zero as $n$ goes to infinity (typically as an inverse power of $n$). 

\begin{thm}[Statistical consistency]
\label{thm:statistical_consistency}
Assume that \asm{asm:density}, \asm{asm:regularity_RKHS}, \asm{asm:bounded_kernel} hold true and that the operator $\Delta^{-1/2} C \Delta^{-1/2}$ is compact on $\h$. Let $(\lambda_n)_{n \in \N}$ be a sequence of positive numbers such that $\lambda_n \rightarrow 0$ and $\lambda_n\sqrt{n} \rightarrow + \infty$. Then, almost surely,$$\widehat{\Poinca}_\mu \xrightarrow{n \rightarrow \infty} \Poinca_\mu.$$
\end{thm}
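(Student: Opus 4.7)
The plan is to follow the variance/bias split suggested in the excerpt: first control the stochastic error $|\widehat{\Poinca}_\mu - \Poinca^\lambda_\mu|$ at fixed $\lambda$, then the deterministic bias $|\Poinca^\lambda_\mu - \Poinca_\mu|$ as $\lambda \to 0$, and finally combine the two along the chosen sequence $\lambda_n$.

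For the variance part, I would use the inequality $\bigl|\,\|A\| - \|B\|\,\bigr| \leq \|A - B\|$ together with the algebraic decomposition
\begin{align*}
\widehat{\D}_\lambda^{-1/2} \widehat{C} \widehat{\D}_\lambda^{-1/2} - \D_\lambda^{-1/2} C \D_\lambda^{-1/2} &= \widehat{\D}_\lambda^{-1/2}\bigl(\widehat{C} - C\bigr)\widehat{\D}_\lambda^{-1/2} \\
&\quad + \bigl(\widehat{\D}_\lambda^{-1/2} - \D_\lambda^{-1/2}\bigr) C \widehat{\D}_\lambda^{-1/2} \\
&\quad + \D_\lambda^{-1/2} C \bigl(\widehat{\D}_\lambda^{-1/2} - \D_\lambda^{-1/2}\bigr).
\end{align*}
Under \asm{asm:bounded_kernel} the i.i.d.\ rank-one summands $K_{x_i} \otimes K_{x_i}$ and $\nabla K_{x_i} \otimes_d \nabla K_{x_i}$ are uniformly bounded (in operator and Hilbert--Schmidt norm) by $\mathcal{K}$ and $\mathcal{K}_d$, so a Hoeffding--Bernstein inequality in Hilbert space, combined with Borel--Cantelli, gives $\|\widehat{C} - C\| = \grandO(\sqrt{\log n / n})$ and $\|\widehat{\D} - \D\| = \grandO(\sqrt{\log n / n})$ almost surely. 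Combined with the trivial bound $\|\D_\lambda^{-1/2}\| \leq \lambda^{-1/2}$ and the integral representation
\[
A^{-1/2} = \frac{1}{\pi} \int_0^\infty t^{-1/2} (A + t I)^{-1}\, dt
\]
used to show $\|\widehat{\D}_\lambda^{-1/2} - \D_\lambda^{-1/2}\| = \grandO(\|\widehat{\D} - \D\|/\lambda^{3/2})$, each of the three terms above is $\grandO(1/(\lambda\sqrt{n}))$ almost surely. This yields $|\widehat{\Poinca}_\mu - \Poinca^{\lambda_n}_\mu| \to 0$ thanks to the assumption $\lambda_n \sqrt{n} \to \infty$.

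For the bias part, I would use a monotone/variational argument. For any fixed $f \in \h$, the ratio $\langle f, C f \rangle_\h / \langle f, (\D + \lambda I) f \rangle_\h$ is nondecreasing as $\lambda \downarrow 0$ and converges pointwise to $\langle f, C f \rangle_\h / \langle f, \D f \rangle_\h$ (this ratio being zero on $\mathrm{Ker}(\D) \subset \mathrm{Ker}(C)$). Hence $\Poinca^\lambda_\mu \leq \Poinca_\mu$ for every $\lambda > 0$, and for any $\varepsilon > 0$, picking a near-maximizer $f_\varepsilon$ of the Rayleigh ratio in Proposition~\ref{prop:Spectral_characterization} gives $\liminf_{\lambda \to 0} \Poinca^\lambda_\mu \geq \Poinca_\mu - \varepsilon$. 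Density assumption \asm{asm:density} identifies the suprema over $\h$ and over $H^1(\mu)$, while the assumed compactness of $\D^{-1/2} C \D^{-1/2}$ guarantees that $\Poinca_\mu < +\infty$ and that the supremum is attained, which is needed to make the whole argument nontrivial.

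The main obstacle is the perturbation estimate for the regularized inverse square root $\widehat{\D}_\lambda^{-1/2} - \D_\lambda^{-1/2}$. The first-power resolvent identity $\widehat{\D}_\lambda^{-1} - \D_\lambda^{-1} = \widehat{\D}_\lambda^{-1}(\D - \widehat{\D})\D_\lambda^{-1}$ is immediate, but extracting the correct $\lambda^{-3/2}$ scaling for the square root requires either the integral representation above or a Lipschitz estimate for $x \mapsto x^{-1/2}$ on $[\lambda, +\infty)$ via functional calculus. Once this is settled, the choice $\lambda_n \to 0$ with $\lambda_n \sqrt{n} \to \infty$ exactly balances the variance term $\grandO(1/(\lambda_n \sqrt{n}))$ against the vanishing bias, yielding the claimed almost sure convergence.
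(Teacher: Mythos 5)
Your overall architecture (variance/bias split along $\lambda_n$) matches the paper, but the variance step contains a genuine quantitative gap. In your three-term decomposition the cross terms involve $\widehat{\D}_\lambda^{-1/2}-\D_\lambda^{-1/2}$, and the functional-calculus/integral-representation route you invoke gives at best $\|\widehat{\D}_\lambda^{-1/2}-\D_\lambda^{-1/2}\|\leqslant \tfrac12\lambda^{-3/2}\|\widehat{\D}-\D\|$. Combined with $\|\widehat{C}-C\|,\ \|\widehat{\D}-\D\|=\grandO(\sqrt{\log n/n})$ and $\|\widehat{\D}_\lambda^{-1/2}\|\leqslant\lambda^{-1/2}$, the cross terms are only $\grandO(n^{-1/2}\lambda^{-2})$ (or $\grandO(n^{-1/2}\lambda^{-3/2})$ if you additionally use $\|C^{1/2}\D_\lambda^{-1/2}\|^2=\Poinca^\lambda_\mu$), not the claimed $\grandO(1/(\lambda\sqrt n))$. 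This is not cosmetic: under the theorem's sole hypothesis $\lambda_n\sqrt n\to\infty$ these bounds need not vanish --- e.g.\ $\lambda_n=n^{-1/2}\log n$ is admissible yet $n^{-1/2}\lambda_n^{-3/2}\to\infty$, and even for the canonical choice $\lambda_n=n^{-1/4}$ the bound $n^{-1/2}\lambda_n^{-2}$ is $\Theta(1)$ (the extra $\sqrt{\log n}$ from your Borel--Cantelli step raises a similar, milder issue). The paper avoids precisely this by never differencing inverse square roots: using $\|T^*T\|=\|TT^*\|$ it replaces $\widehat{\D}_\lambda^{-1/2}C\widehat{\D}_\lambda^{-1/2}$ by $C^{1/2}\widehat{\D}_\lambda^{-1}C^{1/2}$, applies the first-power resolvent identity $\widehat{\D}_\lambda^{-1}-\D_\lambda^{-1}=\widehat{\D}_\lambda^{-1}(\D-\widehat{\D})\D_\lambda^{-1}$, and controls everything through whitened quantities: $\|\widehat{\D}_\lambda^{-1/2}\D_\lambda^{1/2}\|^2\leqslant 2$ (Lemma~\ref{lemma:magic_2}), $\|C^{1/2}\D_\lambda^{-1/2}\|^2=\Poinca^\lambda_\mu$, and the relative concentration bounds of Lemmas~\ref{lemma:concentration_C} and~\ref{lemma:concentration_delta}, the latter scaling as $\sqrt{\mathcal{K}_d\log(\cdot)/(\lambda n)}$. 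That is what yields the $\grandO(1/(\lambda\sqrt n))$ estimate of Proposition~\ref{prop:hat_P_to_P_lambda} and makes $\lambda_n\sqrt n\to\infty$ sufficient; you would need to import this relative-perturbation machinery (or an equivalent) to close your variance step.

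Your bias argument, by contrast, is correct and takes a genuinely different route from the paper's: monotonicity of $\lambda\mapsto\langle f,Cf\rangle_\h/\langle f,(\D+\lambda I)f\rangle_\h$ for each fixed $f$, the inclusion $\mathrm{Ker}(\D)\subset\mathrm{Ker}(C)$, and a near-maximizer (no attainment needed) give $\Poinca^\lambda_\mu\uparrow\Poinca_\mu$ using only boundedness of $\D^{-1/2}C\D^{-1/2}$, whereas the paper proves the stronger operator-norm convergence of $\D_\lambda^{-1/2}C\D_\lambda^{-1/2}$ via Lemma~\ref{le:compact_convergence}, which is where compactness enters. So the bias half of your proposal stands (and is more elementary); the proof of the theorem fails only at the variance estimate described above.
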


As already mentioned, the proof is divided into two steps: the analysis of the statistical error for which we have an explicit rate of convergence in probability (see Proposition~\ref{prop:hat_P_to_P_lambda} below) and which requires $n^{-1/2}/\lambda_n \rightarrow 0$, and the analysis of the bias for which we need $\lambda_n \rightarrow 0$ and the compactness condition (see Proposition \ref{prop:P_lambda_to_P}). Notice that the compactness assumption in Proposition~\ref{prop:P_lambda_to_P} and Theorem~\ref{thm:statistical_consistency} is stronger than \textbf{(PI)}. Indeed, it can be shown that satisfying \textbf{(PI)} is equivalent to having the operator $\Delta^{-1/2} C \Delta^{-1/2}$ bounded whereas to have convergence of the bias we need compactness. Note also that $\lambda_n = n^{-1/4}$ matches the two conditions stated in Theorem \ref{thm:statistical_consistency} and is the optimal balance between the rate of convergence of the statistical error (of order $\frac{1}{\lambda \sqrt{n}}$, see Proposition \ref{prop:hat_P_to_P_lambda}) and of the bias we obtain in some cases (of order~$\lambda$, see Section \ref{sec:analysis_of_bias} of the Appendix). Note that the rates of convergence do not depend on the dimension $d$ of the problem which is a usual strength of kernel methods and differ from local methods like diffusion maps \cite{COIFMAN20065,hein2007graph}.

For the statistical error term, it is possible to quantify the rate of convergence of the estimator to the regularized Poincaré constant as shown below.

\begin{prop}[Analysis of the statistical error]
\label{prop:hat_P_to_P_lambda}
Suppose that \asm{asm:density}, \asm{asm:regularity_RKHS}, \asm{asm:bounded_kernel} hold true. For any $\delta \in (0,1/3)$, and $\lambda > 0$ such that $\lambda \leqslant\|\Delta\| $ and any integer $n \geqslant 15 \frac{\mathcal{K}_d}{\lambda} \log \frac{4\,\mathrm{Tr } \D }{\lambda \delta }$, with probability at least $1-3\delta$,
\begin{align}
\label{eq:hat_P_to_P_lambda}
\left|\widehat{\Poinca}_\mu-\Poinca^\lambda_\mu\right| \leqslant \frac{8 \mathcal{K}}{\lambda \sqrt{n}}\log (2/\delta) + \mathrm{o}\left(\frac{1}{\lambda \sqrt{n}}\right).
\end{align}
\end{prop}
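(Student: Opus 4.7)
The plan is to use the reverse triangle inequality for the operator norm to reduce the bound to
$$\bigl|\widehat{\Poinca}_\mu - \Poinca^\lambda_\mu\bigr| \leq \bigl\|\widehat{\D}_\lambda^{-1/2} \widehat{C} \widehat{\D}_\lambda^{-1/2} - \D_\lambda^{-1/2} C \D_\lambda^{-1/2}\bigr\|,$$
and then to decompose this norm into a ``covariance'' fluctuation (involving $\widehat{C} - C$) and a ``Dirichlet'' fluctuation (involving $\widehat{\D} - \D$). Concretely, introducing the auxiliary operators $A := \widehat{\D}_\lambda^{-1/2}\D_\lambda^{1/2}$ and $T := \D_\lambda^{-1/2} C \D_\lambda^{-1/2}$ (so that $\|T\| = \Poinca^\lambda_\mu$), the identity
$$\widehat{\D}_\lambda^{-1/2}\widehat{C}\widehat{\D}_\lambda^{-1/2} - T \;=\; A\,\D_\lambda^{-1/2}(\widehat{C}-C)\D_\lambda^{-1/2}\,A^{*} \;+\; \bigl(A T A^{*} - T\bigr)$$
reduces the problem to controlling $\|A\|$, $\|A - I\|$, and $\|\widehat{C} - C\|$.

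For the covariance fluctuation, apply Bernstein's inequality for i.i.d.\ bounded self-adjoint operator sums in a Hilbert space to the centered rank-one operators $K_{x_i}\otimes K_{x_i} - \E[K_x\otimes K_x]$, whose norm is at most $2\mathcal{K}$ by \asm{asm:bounded_kernel}; a parallel vector-valued concentration for $\widehat{m} - m$ handles the mean correction in $\widehat{C} = \widehat{\Sigma} - \widehat{m}\otimes\widehat{m}$. Their union holds with probability at least $1-2\delta$ and yields $\|\widehat{C} - C\| \leq c\,\mathcal{K}\log(2/\delta)/\sqrt{n}$ for an explicit constant $c$. The crude bound $\|\D_\lambda^{-1/2}\| \leq \lambda^{-1/2}$ then gives
$$\bigl\|\D_\lambda^{-1/2}(\widehat{C}-C)\D_\lambda^{-1/2}\bigr\| \leq \lambda^{-1}\|\widehat{C}-C\|.$$

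For the factor $A$, apply a ``relative'' Bernstein bound to the operators $\D_\lambda^{-1/2}(\nabla K_{x_i}\otimes_d \nabla K_{x_i})\D_\lambda^{-1/2}$, whose norm is at most $\mathcal{K}_d/\lambda$ under \asm{asm:bounded_kernel}. The sample-size hypothesis $n \geq 15\,\mathcal{K}_d\,\lambda^{-1}\,\log(4\,\mathrm{Tr}\,\D/(\lambda\delta))$ is calibrated precisely so that
$$t := \bigl\|\D_\lambda^{-1/2}(\widehat{\D}-\D)\D_\lambda^{-1/2}\bigr\| \leq \tfrac{1}{2}$$
with probability at least $1-\delta$. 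From the operator inequality $\widehat{\D}_\lambda \succeq (1-t)\D_\lambda$ one deduces $\|A\|^{2} = \|\D_\lambda^{1/2}\widehat{\D}_\lambda^{-1}\D_\lambda^{1/2}\| \leq 2$ and, via operator-monotone function calculus for the square root (e.g.\ the representation $X^{1/2} = \pi^{-1}\int_0^\infty s^{-1/2}(I - s(X+s)^{-1})\,ds$), the linear-in-$t$ bound $\|A - I\| = \grandO(t)$.

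Combining everything by a union bound over the three concentration events (hence the $1-3\delta$ of the statement), the covariance branch contributes the announced leading term $\|A\|^{2}\lambda^{-1}\|\widehat{C}-C\| \leq \tfrac{8\mathcal{K}}{\lambda\sqrt{n}}\log(2/\delta)$, while the Dirichlet remainder is dominated by $(\|A\|+1)\|A - I\|\,\|T\| = \grandO(\Poinca^\lambda_\mu\,\mathcal{K}_d/(\lambda\sqrt{n}))$, which is absorbed into the $\petito(1/(\lambda\sqrt{n}))$ term in the asymptotic regime $\lambda\sqrt{n}\to\infty$. The main obstacle is converting the scalar inequality $t \leq 1/2$ into the norm bound $\|A - I\| = \grandO(t)$, since the square root is not operator-Lipschitz in general; this step relies on the integral representation of $X^{1/2}$ above or an equivalent dyadic spectral argument.
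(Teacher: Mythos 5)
Your decomposition and two of your three ingredients match the paper's proof (the whitened Bernstein bound for $\|\D_\lambda^{-1/2}(\widehat{\D}-\D)\D_\lambda^{-1/2}\|$, the bound $\|A\|^2=\|(I+F)^{-1}\|\leqslant (1-t)^{-1}\leqslant 2$ under the stated sample size, and the union bound giving $1-3\delta$). The genuine gap is the step you yourself flag: the claim $\|A-I\|=\grandO(t)$ with a constant independent of $\lambda$, where $A=\widehat{\D}_\lambda^{-1/2}\D_\lambda^{1/2}$ and $t=\|\D_\lambda^{-1/2}(\widehat{\D}-\D)\D_\lambda^{-1/2}\|$. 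Note that $A^*A=(I+F)^{-1}$ being close to $I$ does not make $A$ close to $I$, and the square root is not operator Lipschitz, so this has to be proved as a genuinely two-sided ``relative'' perturbation bound. The integral representation you invoke does not deliver it: writing $A-I=\widehat{\D}_\lambda^{-1/2}(\D_\lambda^{1/2}-\widehat{\D}_\lambda^{1/2})$ and $\D_\lambda^{1/2}-\widehat{\D}_\lambda^{1/2}=\frac{1}{\pi}\int_0^\infty \sqrt{s}\,(\widehat{\D}_\lambda+s)^{-1}(\D_\lambda-\widehat{\D}_\lambda)(\D_\lambda+s)^{-1}ds$, the best you extract after inserting $\D_\lambda-\widehat{\D}_\lambda=-\D_\lambda^{1/2}F\D_\lambda^{1/2}$ is a bound of order $t\log(\|\D\|/\lambda)$: the range $\lambda\lesssim s\lesssim\|\D\|$ contributes an integrand of order $1/s$, hence the logarithm, and no choice of crude resolvent bounds removes it. So as stated the linear-in-$t$ bound is unjustified (and is exactly the kind of statement that fails for operator square roots), even though the extra $\log(\|\D\|/\lambda)$ would in fact still be harmless in the regime $\lambda\to 0$, $\lambda\sqrt{n}\to\infty$. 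The paper sidesteps this difficulty entirely: instead of comparing $ATA^*$ to $T$, it uses $\|T^*T\|=\|TT^*\|$ to rewrite $\|\widehat{\D}_\lambda^{-1/2}C\widehat{\D}_\lambda^{-1/2}\|=\|C^{1/2}\widehat{\D}_\lambda^{-1}C^{1/2}\|$, and then the resolvent identity $\widehat{\D}_\lambda^{-1}-\D_\lambda^{-1}=\widehat{\D}_\lambda^{-1}(\D-\widehat{\D})\D_\lambda^{-1}$ gives directly $\|C^{1/2}(\widehat{\D}_\lambda^{-1}-\D_\lambda^{-1})C^{1/2}\|\leqslant\|A\|^2\,\Poinca^\lambda_\mu\,t\leqslant 2\Poinca^\lambda_\mu t$, with no square-root perturbation needed. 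Adopting that step repairs your argument with no other change.

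A second, smaller problem is the final bookkeeping. You state the Dirichlet remainder as $\grandO(\Poinca^\lambda_\mu\mathcal{K}_d/(\lambda\sqrt{n}))$ and claim it is absorbed into the $\petito(1/(\lambda\sqrt{n}))$ term; a quantity of order $1/(\lambda\sqrt{n})$ cannot be $\petito(1/(\lambda\sqrt{n}))$, so this would instead pollute the leading constant. What saves the argument is the relative Bernstein bound you already invoke, which gives $t=\grandO\big(\sqrt{\mathcal{K}_d\log(4\,\tr\D/(\lambda\delta))/(\lambda n)}\big)$; then the remainder is of order $\Poinca^\lambda_\mu\sqrt{\lambda\log(1/(\lambda\delta))}\cdot\frac{1}{\lambda\sqrt n}$ (even with an extra $\log(\|\D\|/\lambda)$), which is genuinely $\petito(1/(\lambda\sqrt n))$ as $\lambda\to 0$ — this is exactly how the paper argues. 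Finally, be aware that your leading term is obtained from the unwhitened bound $\lambda^{-1}\|\widehat{C}-C\|$, whereas the paper applies Bernstein to the whitened operators $\D_\lambda^{-1/2}K_{x_i}\otimes\D_\lambda^{-1/2}K_{x_i}$ with $\Nla\leqslant\mathcal{K}/\lambda$; both give the order $\mathcal{K}/(\lambda\sqrt n)$, but your route does not actually produce the constant $8$ and the pure $\log(2/\delta)$ dependence without further care (the operator Bernstein $\beta$ contains a trace term), so the claimed constant is asserted rather than derived.
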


Note that in Proposition~\ref{prop:hat_P_to_P_lambda} we are only interested in the regime where $\lambda \sqrt{n}$ is large. Lemmas \ref{lemma:concentration_C} and \ref{lemma:concentration_delta} of the Appendix give explicit and sharper bounds under refined hypotheses on the spectra of $C$ and $\Delta$. Recall also that under assumption \asm{asm:bounded_kernel}, $C$ and $\Delta$ are trace-class operators (as proved in the Appendix, Section \ref{subsec:operators}) so that $\|\Delta\|$ and $\tr(\Delta)$ are indeed finite. Finally, remark that \eqref{eq:hat_P_to_P_lambda} implies the almost sure convergence of the statistical error by applying the Borel-Cantelli lemma.

\begin{prop}[Analysis of the bias]
\label{prop:P_lambda_to_P}
Assume that \asm{asm:density}, \asm{asm:regularity_RKHS}, \asm{asm:bounded_kernel} hold true, and that the bounded operator $\Delta^{-1/2} C \Delta^{-1/2}$ is compact on $\h$. Then,
\begin{align*}
\underset{\lambda \rightarrow 0}{\lim} \ \Poinca^\lambda_\mu = \Poinca.
\end{align*}
\end{prop}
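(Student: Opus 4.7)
The plan is to work with the operator-norm formulations $\Poinca_\mu = \|A\|$ and $\Poinca^\lambda_\mu = \|A_\lambda\|$ on $\h_0 := (\mathrm{Ker}\,\D)^\perp$, where $A := \D^{-1/2} C \D^{-1/2}$ and $A_\lambda := \D_\lambda^{-1/2} C \D_\lambda^{-1/2}$ with $\D_\lambda := \D + \lambda I$; restricting to $\h_0$ is harmless since $\mathrm{Ker}(\D) \subset \mathrm{Ker}(C)$, as noted in the excerpt. I will prove the stronger statement $\|A_\lambda - A\| \to 0$ as $\lambda \to 0$, whence $\Poinca^\lambda_\mu \to \Poinca_\mu$ follows by continuity of the operator norm.

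The key algebraic move is the factorisation
$$A_\lambda \;=\; T_\lambda\, A\, T_\lambda, \qquad T_\lambda := \D^{1/2} \D_\lambda^{-1/2} = \D_\lambda^{-1/2} \D^{1/2},$$
obtained by inserting $I = \D^{-1/2} \D^{1/2}$ around the middle $C$ (to be read as an identity of bounded operators via the functional calculus, not as naive cancellation of unbounded factors). The operator $T_\lambda$ is self-adjoint on $\h_0$ and satisfies $\|T_\lambda\| \leq 1$, since by spectral calculus its spectrum is $\{\sigma^{1/2}(\sigma+\lambda)^{-1/2} : \sigma \in \mathrm{Spec}(\D|_{\h_0})\} \subset [0,1]$. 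Moreover $T_\lambda \to I$ \emph{strongly} on $\h_0$: for each fixed $f$, dominated convergence applied to the spectral resolution of $\D$ gives $T_\lambda f \to f$, using $\sigma(\sigma+\lambda)^{-1} \uparrow 1$ on $(0,\infty)$.

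The decisive step invokes the compactness of $A$. A classical fact states that if $K$ is a compact operator and $B_\lambda \to B$ strongly with $\|B_\lambda\|$ uniformly bounded, then $B_\lambda K \to B K$ in operator norm (strong convergence is uniform on the precompact set $\{Kx : \|x\|\leq 1\}$). Applied with $B_\lambda = T_\lambda \to I$ and $K = A$, this yields $T_\lambda A \to A$ in norm; taking adjoints, together with $T_\lambda^* = T_\lambda$ and $A^* = A$, gives $A T_\lambda \to A$ in norm as well. The triangle inequality
$$\|A_\lambda - A\| \;\leq\; \|T_\lambda\|\,\|A T_\lambda - A\| \;+\; \|T_\lambda A - A\| \;\longrightarrow\; 0$$
then closes the argument.

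The main obstacle is careful book-keeping around the (possibly unbounded) operator $\D^{-1/2}$: the Poincaré inequality is equivalent to mere boundedness of $A$, so every occurrence of $\D^{-1/2}$ in the factorisation must be interpreted through the functional calculus restricted to $\h_0$, where $\D$ is injective. This is also exactly where the compactness assumption earns its place: it is what upgrades the elementary strong convergence $T_\lambda \to I$ into the operator-norm convergence of the composition $T_\lambda A T_\lambda$. Without compactness, the above argument would deliver only $A_\lambda \to A$ in the strong operator topology, which does not in general imply convergence of the operator norms $\|A_\lambda\| \to \|A\|$.
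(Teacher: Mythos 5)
Your proof is correct and follows essentially the same route as the paper: the same factorisation $\D_\lambda^{-1/2}C\D_\lambda^{-1/2} = T_\lambda\,(\D^{-1/2}C\D^{-1/2})\,T_\lambda^*$ with $\|T_\lambda\|\leqslant 1$ and $T_\lambda\to I$ strongly, combined with the standard fact that compactness upgrades strong convergence to norm convergence (the paper isolates this as Lemma~\ref{le:compact_convergence}, proved by finite-rank approximation, while you argue via precompactness of the image of the unit ball — an equivalent classical argument). Your explicit choice $T_\lambda=\D_\lambda^{-1/2}\D^{1/2}$ is in fact the correct bounded direction, cleaning up a small slip in the paper's notation.
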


As said above the compactness condition (similar to the one used for convergence proofs of kernel Canonical Correlation Analysis \citep{fukumizu2007statistical}) is stronger than satisfying \textbf{(PI)}. The compactness condition adds conditions on the spectrum of $\Delta^{-1/2} C \Delta^{-1/2}$: it is discrete and accumulates at~$0$. We give more details on this condition in Section \ref{sec:analysis_of_bias} of the Appendix and derive explicit rates of convergence under general conditions. We derive also a rate of convergence for more specific structures (Gaussian case or under an assumption on the support of $\mu$) in Sections \ref{sec:analysis_of_bias} and \ref{sec:gaussian_bias} of the Appendix.

\section{Learning a Reaction Coordinate}
\label{sec:learning_RC}

If the measure $\mu$ is multimodal, the Langevin dynamics~\eqref{eq:langevin} is trapped for long times in certain regions (modes) preventing it from efficient space exploration. This phenomenon is called \textit{metastability} and is responsible for the slow convergence of the diffusion to its equilibrium \citep{Lelievre2013,Lelievre_2008}. Some efforts in the past decade \citep{Lelievre2015} have focused on understanding this multimodality by capturing the behavior of the dynamics at a coarse-grained level, which often have a low-dimensional nature. The aim of this section is to take advantage of the estimation of the Poincaré constant to give a procedure to unravel these dynamically meaningful slow variables called reaction coordinate. 

\subsection{Good Reaction Coordinate}
\label{subsec:GRC}
From a numerical viewpoint, a good reaction coordinate can be defined as a low dimensional function $\xi: \R^d \to \R^p\  (p \ll d) $ such that the family of conditional measures $\left(\mu(\cdot | \xi(x) = r)\right)_{z \in \R^p}$ are ``less multimodal'' than the measure $d\mu$. This can be fully formalized in particular in the context of free energy techniques such as the adaptive biasing force method, see for example \cite{Lelievre_2008}. For more details on mathematical formalizations of metastability, we also refer to \cite{Lelievre2013}. The point of view we will follow in this work is to choose $\xi$ in order to maximize the Poincaré constant of the pushforward distribution $\xi * \mu$. The idea is to capture in $\xi * \mu$ the essential multimodality of the original measure, in the spirit of the two scale decomposition of Poincaré or logarithmic Sobolev constant inequalities \citep{Lelievre2009,menz2014,OTTO2007121}.

\subsection{Learning a Reaction Coordinate}
\label{subsec:learning_RC}

\paragraph{Optimization problem.} Let us assume in this subsection that the reaction coordinate is an orthogonal projection onto a linear subspace of dimension $p$. Hence $\xi$ can be represented by $\forall x \in \R^d, \  \xi(x) = A x$ with $A \in \mathcal{S}^{p,d}$ where $\mathcal{S}^{p,d} = \{A \in \R^{p\times d}\ \mathrm{s.\ t.}\ A A^\top = I_p \}$ is the Stiefel manifold~\citep{edelman1998geometry}. As discussed in Section~\ref{subsec:GRC}, to find a good reaction coordinate we look for $\xi$ for which the Poincaré constant of the pushforward measure $\xi * \mu$ is the largest. Given $n$ samples, let us define the matrix $X = (x_1,\hdots,x_n)^\top \in \R^{n \times d}$. We denote by $\widehat{\Poinca}_{X}$ the estimator of the Poincaré constant using the samples $(x_1,\hdots,x_n)$. Hence $\widehat{\Poinca}_{AX^\top}$ defines an estimator of the Poincaré constant of the pushforward measure $\xi * \mu$. Our aim is to find $\underset{A \in \mathcal{S}^{p,d}}{\mathrm{argmax}}\ \ \widehat{\Poinca}_{AX^\top}$.

\paragraph{Random features.} One computational issue with the estimation of the Poincaré constant is that building $\widehat{C}$ and $\widehat{\Delta}$ requires respectively constructing $n \times n$ and $nd \times nd$ matrices. Random features~\citep{Rahimi2008} avoid this problem by building explicitly features that approximate a translation invariant kernel $\ K(x,x') = K(x-x')$. More precisely, let $M$ be the number of random features, $(w_m)_{1 \leqslant m\leqslant M}$ be random variables independently and identically distributed according to $\P(dw) = \int_{\R^d} \mathrm{e}^{-\mathrm{i} w^\top \delta} K(\delta) d \delta \, dw$ and $(b_m)_{1 \leqslant m\leqslant M}$ be independently and identically distributed according to the uniform law on $[0,2\pi]$, then the feature vector $\phi^M (x) = \sqrt{\frac{2}{M}} \left(\cos(w_1^\top x + b_1), \hdots, \cos(w_M^\top x + b_M)\right)^\top \in \R^M$ satisfies $K(x,x') \approx \phi^M (x)^\top \phi^M (x') $. Therefore, random features allow to approximate $\widehat{C}$ and $\widehat{\Delta}$ by $M \times M$ matrices $\widehat{C}^M$ and $\widehat{\D}^M$ respectively. Finally, when these matrices are constructed using the projected samples, i.e.~$\left(\cos(w_m^\top A x_i + b_m)\right)_{_{{ \substack{1\leq m\leq M \\ 1\leq i\leq n}}}}$ , we denote them by $\widehat{C}^M_A$ and $\widehat{\D}^M_A$ respectively. Hence, the problem reads
\begin{align}
\label{eq:optimization_problem}
\mathrm{Find} \  \underset{A \in \mathcal{S}^{p,d}}{\mathrm{argmax}}\ \ \widehat{\Poinca}_{AX^\top} = \underset{A \in \mathcal{S}^{p,d}}{\mathrm{argmax}}\  \max_{v \in \R^M \setminus \{0\}} \ F(A,v)\ ,\quad \textrm{where}\ F(A,v):=\frac{v^\top \widehat{C}^M_A v}{v^\top (\widehat{\D}^M_A + \lambda I) v }.
\end{align}

\paragraph{Algorithm.} To solve the non-concave optimization problem \eqref{eq:optimization_problem}, our procedure is to do one step of non-Euclidean gradient descent to update $A$ (gradient descent in the Stiefel manifold) and one step by solving the generalized eigenvalue problem to update $v$. More precisely, the algorithm~reads:

\vspace*{0.25cm}
\begin{algorithm}[H]
 \KwResult{Best linear Reaction Coordinate: $A_* \in \mathcal{S}^{d, p}$}
 $A_0$ random matrix in $\mathcal{S}^{d, p}$, $\eta_t > 0$ step-size\;
 \For{$t = 0, \hdots, T-1$}{
 \begin{itemize}
  \item Solve generalized largest eigenvalue problem with matrices $\widehat{C}^M_{A_t}$ and $\widehat{\D}^M_{A_t}$ to get~$v^* (A_t)$: $$ v^* (A_t) = \underset{v \in \R^M \setminus\{0\}}{\mathrm{argmax}}\ \  \frac{v^\top \widehat{C}^M_A v}{v^\top (\widehat{\D}^M_A + \lambda I) v }. $$
  \item Do one gradient ascent step: $A_{t+1} = A_t + \eta_t\ \mathrm{grad}_A\, F(A,v^* (A_t)).$
 \end{itemize}
  }
 \caption{Algorithm to find best linear Reaction Coordinate.}
\end{algorithm}

\section{Numerical experiments}
\label{sec:experiments}

We divide our experiments into two parts: the first one illustrates the convergence of the estimated Poincaré constant as given by Theorem~\ref{thm:statistical_consistency} (see Section~\ref{subsec:experiments_poincare}), and the second one demonstrates the interest of the reaction coordinates learning procedure described in Section~\ref{subsec:learning_RC} (see Section~\ref{subsec:experiments_RC}).

\subsection{Estimation of the Poincaré constant}
\label{subsec:experiments_poincare}

In our experiments we choose the Gaussian Kernel $K(x,x') = \exp\, (-\|x-x'\|^2)$? This induces a RKHS satisfying \asm{asm:density}, \asm{asm:regularity_RKHS}, \asm{asm:bounded_kernel}. Estimating $\widehat{\Poinca}_\mu$ from $n$ samples $(x_i)_{i \leqslant n}$ is equivalent to finding the largest eigenvalue for an operator from $\h$ to $\h$. Indeed, we have 
$$ \widehat{\Poinca}_\mu = \left\| (\widehat{Z}_n^* \widehat{Z}_n + \lambda I)^{-\frac{1}{2}} \widehat{S}_n^* \left(I - \frac{1}{n}\mathds{1}\mathds{1}^\top\right)\, \widehat{S}_n\, (\widehat{Z}_n^* \widehat{Z}_n + \lambda I)^{-\frac{1}{2}} \right\|_\h, $$
where $\widehat{Z}_n = \sum_{i=1}^d \widehat{Z}^i_n$ and $\widehat{Z}^i_n$ is the operator from $\h$ to $\R^n$: $\forall g\in \h$, $\widehat{Z}^i_n(g) = \frac{1}{\sqrt{n}} \left( \langle  g, \partial_i K_{x_j} \rangle \right)_{1 \leqslant j\leqslant n}$ and $\widehat{S}_n$ is the operator from $\h$ to $\R^n$: $\forall g\in \h$, $\widehat{S}_n(g) = \frac{1}{\sqrt{n}} \left( \langle  g, K_{x_j} \rangle \right)_{1 \leqslant j\leqslant n}$. 
By the Woodbury operator identity, $(\lambda I+\widehat{Z}_n^*\widehat{Z}_n)^{-1}=\frac{1}{\lambda}\left(I- \widehat{Z}_n^*(\lambda I+\widehat{Z}_n\widehat{Z}_n^*)^{-1}\widehat{Z}_n\right)$, and the fact that for any operator $\|T^* T\| = \|T T^*\|$,
\begin{align*}
\widehat{\Poinca}_\mu &= \left\| (\widehat{Z}_n^* \widehat{Z}_n + \lambda I)^{-\frac{1}{2}} \widehat{S}_n^* \left(I - \frac{1}{n}\mathds{1}\mathds{1}^\top\right)\, \widehat{S}_n\, (\widehat{Z}_n^* \widehat{Z}_n + \lambda I)^{-\frac{1}{2}} \right\|_\h \\
&= \left\| (\widehat{Z}_n^* \widehat{Z}_n + \lambda I)^{-\frac{1}{2}} \widehat{S}_n^* \left(I - \frac{1}{n}\mathds{1}\mathds{1}^\top\right) \left(I - \frac{1}{n}\mathds{1}\mathds{1}^\top\right)\, \widehat{S}_n\, (\widehat{Z}_n^* \widehat{Z}_n + \lambda I)^{-\frac{1}{2}} \right\|_\h \\
&= \left\| \left(I - \frac{1}{n}\mathds{1}\mathds{1}^\top\right)\, \widehat{S}_n\, (\widehat{Z}_n^* \widehat{Z}_n + \lambda I)^{-1} \widehat{S}_n^* \left(I - \frac{1}{n}\mathds{1}\mathds{1}^\top\right) \right\|_2 \\
&= \frac{1}{\lambda} \left\| \left(I - \frac{1}{n}\mathds{1}\mathds{1}^\top\right)  \left(\widehat{S}_n \widehat{S}_n^* - \widehat{S}_n \widehat{Z}_n^* \, (\widehat{Z}_n \widehat{Z}_n^* + \lambda I)^{-1} \widehat{Z}_n \widehat{S}_n^*\right) \left(I - \frac{1}{n}\mathds{1}\mathds{1}^\top\right) \right\|_2,
\end{align*}
which is now the largest eigenvalue of a $n \times n$ matrix built as the product of matrices involving the kernel $K$ and its derivatives. Note for the above calculation that we used that $\left(I - \frac{1}{n}\mathds{1}\mathds{1}^\top\right)^2 = \left(I - \frac{1}{n}\mathds{1}\mathds{1}^\top\right)$.

\begin{figure}[ht]
\includegraphics[width=0.49\textwidth]{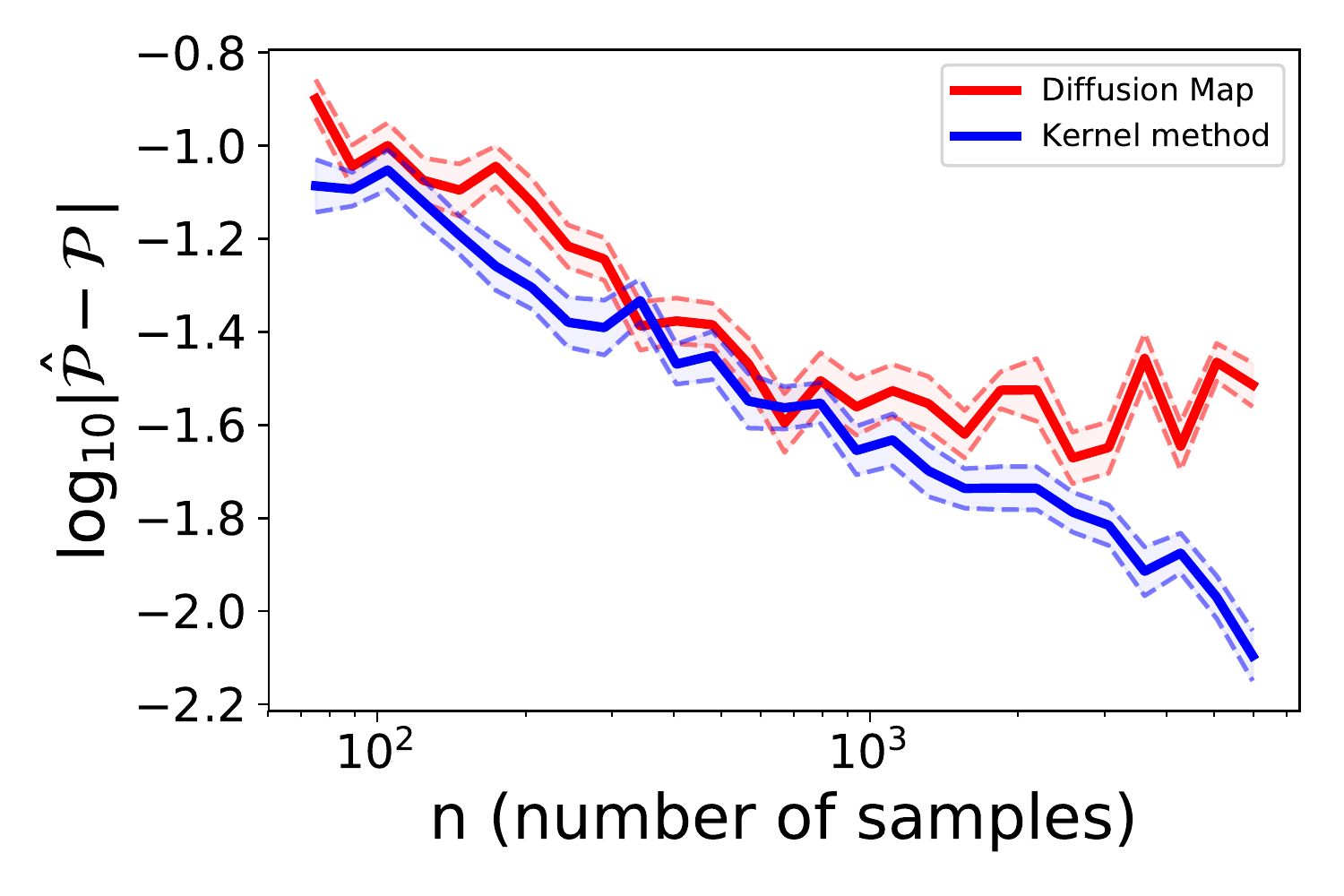}
\hspace{0.1cm}%
\includegraphics[width=0.49\textwidth]{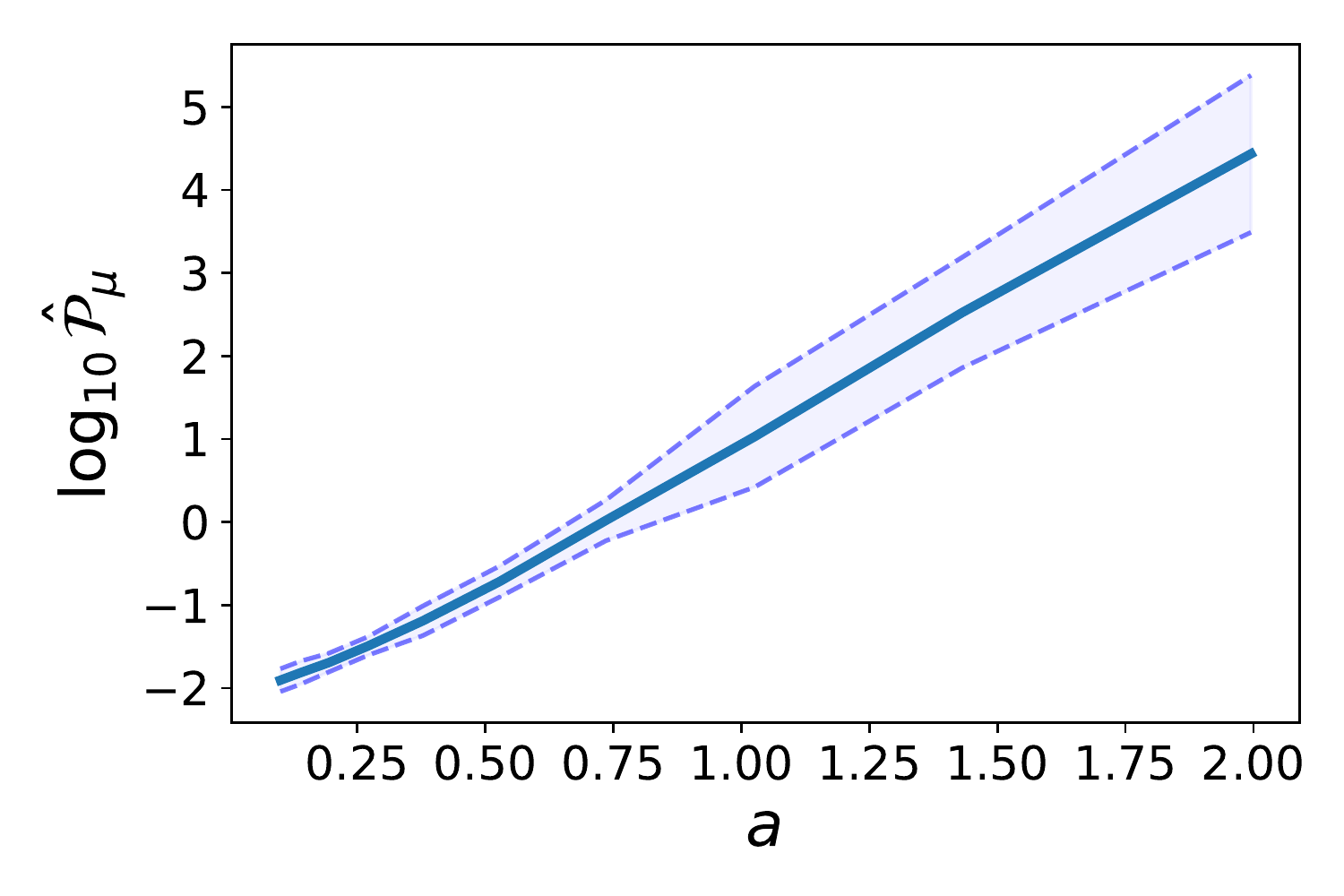} 
\caption{ \textbf{(Left)} Comparison of the convergences of the kernel-based method described in this paper and diffusion maps in the case of a Gaussian of variance~$1$ (for each $n$ we took the mean over $50$ runs). The dotted lines correspond to standard deviations of the estimator. \textbf{(Right)} Exponential growth of the Poincaré constant for a mixture of two Gaussians $\mathcal{N}(\pm \frac{a}{2},\sigma^2)$ as a function of the distance $a$ between the two Gaussians ($\sigma = 0.1$ and $n = 500$).}
\label{fig:poinca}
\end{figure}

We illustrate in Figure~\ref{fig:poinca} the rate of convergence of the estimated Poincaré constant to $1$ for the Gaussian $\mathcal{N}(0,1)$ as the number of samples $n$ grows. Recall that in this case the Poincaré constant is equal to~$1$ (see Subsection \ref{subsec:examples}). We compare our prediction to the one given by diffusion maps techniques \citep{COIFMAN20065}. For our method, in all the experiments we set $\lambda_n = \frac{C_\lambda}{n}$, which is smaller than what is given by Theorem \ref{thm:statistical_consistency}, and optimize the constant $C_\lambda$ with a grid search. Following \citep{hein2007graph}, to find the correct bandwidth $\varepsilon_n$ of the kernel involved in diffusion maps, we performed a similar grid search on the constant $C_\varepsilon$ for the Diffusion maps with the scaling $\varepsilon_n = \frac{C_\varepsilon}{n^{1/4}}$. Additionally to a faster convergence when $n$ become large, the kernel-based method is more robust with respect to the choice of itss hyperparameter, which is of crucial importance for the quality of diffusion maps. Note also that we derive an explicit convergence rate for the bias in the Gaussian case in Section \ref{sec:gaussian_bias} of the Appendix. In Figure~\ref{fig:poinca}, we also show the growth of the Poincaré constant for a mixture of Gaussians of variances~$1$ as a function of the distance  between the two means of the Gaussians. This is a situation for which the estimation provides an estimate when, up to our knowledge, no precise Poincaré constant is known (even if lower and upper bounds are known \citep{Chafai2010}). 

\subsection{Learning a reaction coordinate}
\label{subsec:experiments_RC}

We next illustrate the algorithm described in Section \ref{sec:learning_RC} to learn a reaction coordinate which, we recall, encodes directions which are difficult to sample. To perform the gradient step over the Stiefel manifold we used Pymanopt \citep{Pymanopt2016}, a Python library for manifold optimization derived from Manopt~\citep{manopt} (Matlab). We show here a synthetic two-dimensional example example. We first preprocessed the samples with ``whitening'', i.e., making it of variance~$1$ in all directions to avoid scaling artifacts. In both examples, we took $M = 200 $ for the number of random features and $n = 200 $ for the number of samples.

\begin{figure}[ht]
\footnotesize
\includegraphics[width=0.49\textwidth]{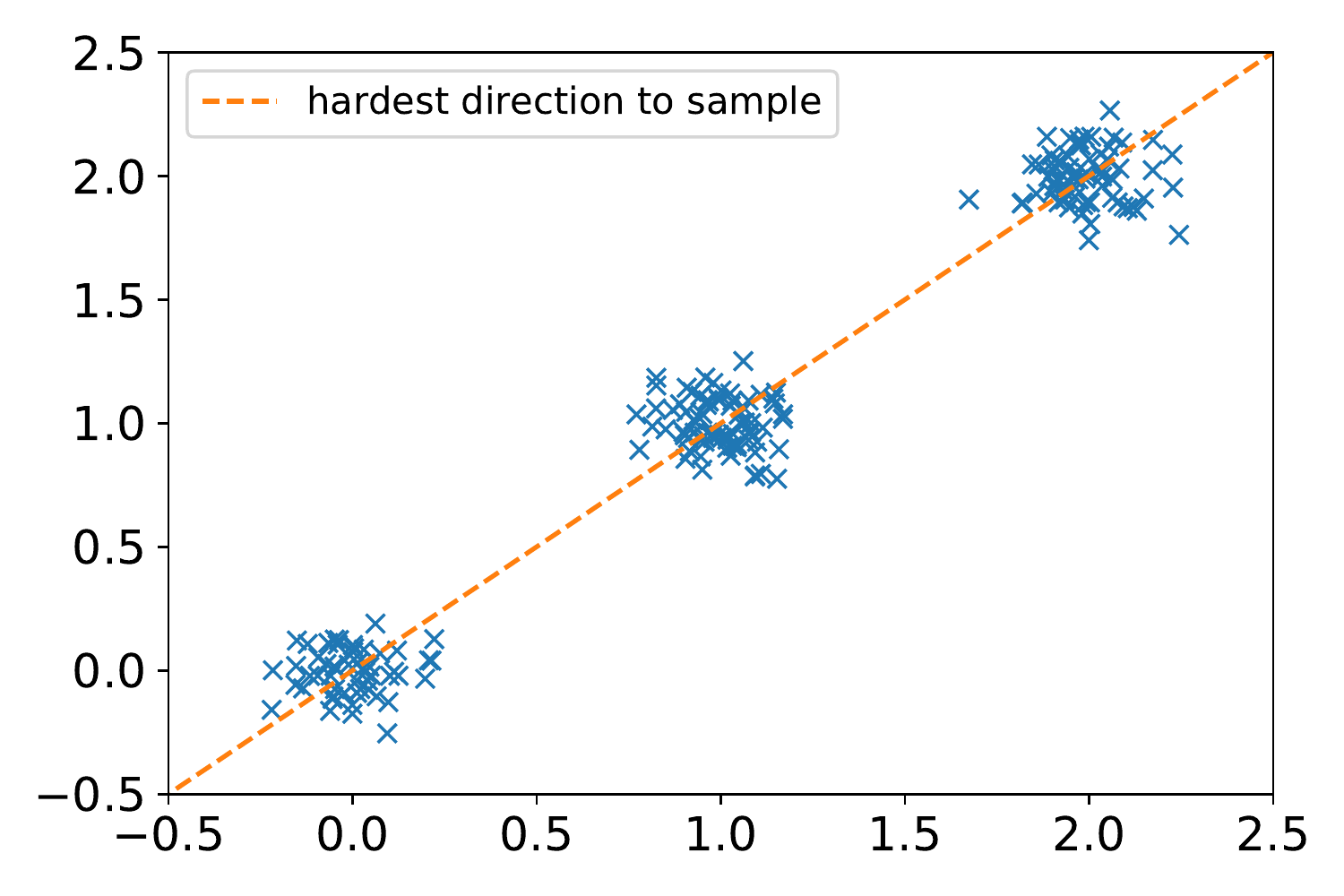}
\hspace{0.1cm}%
\includegraphics[width=0.49\textwidth]{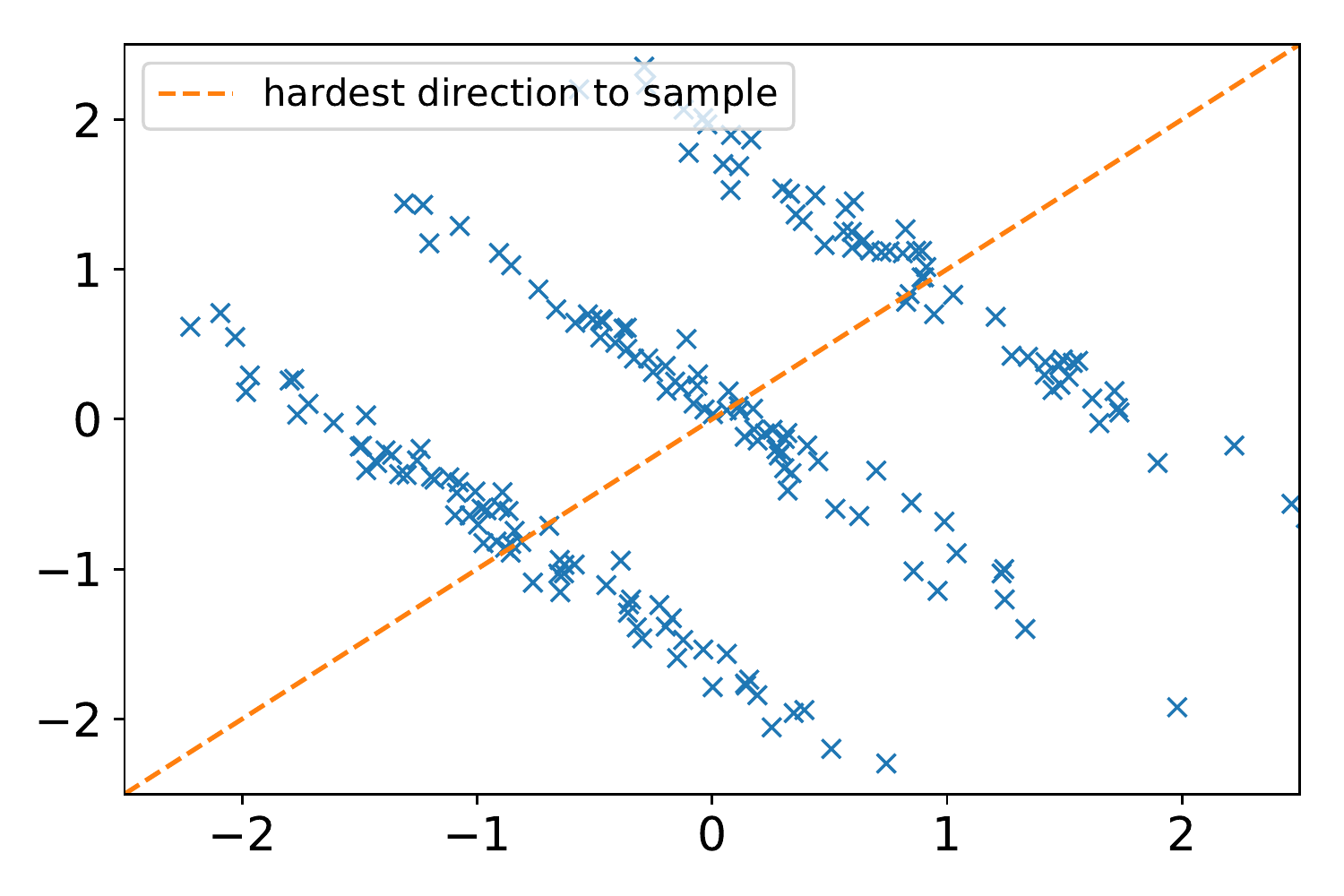}
\\
\begin{center}
\includegraphics[width=0.49\textwidth]{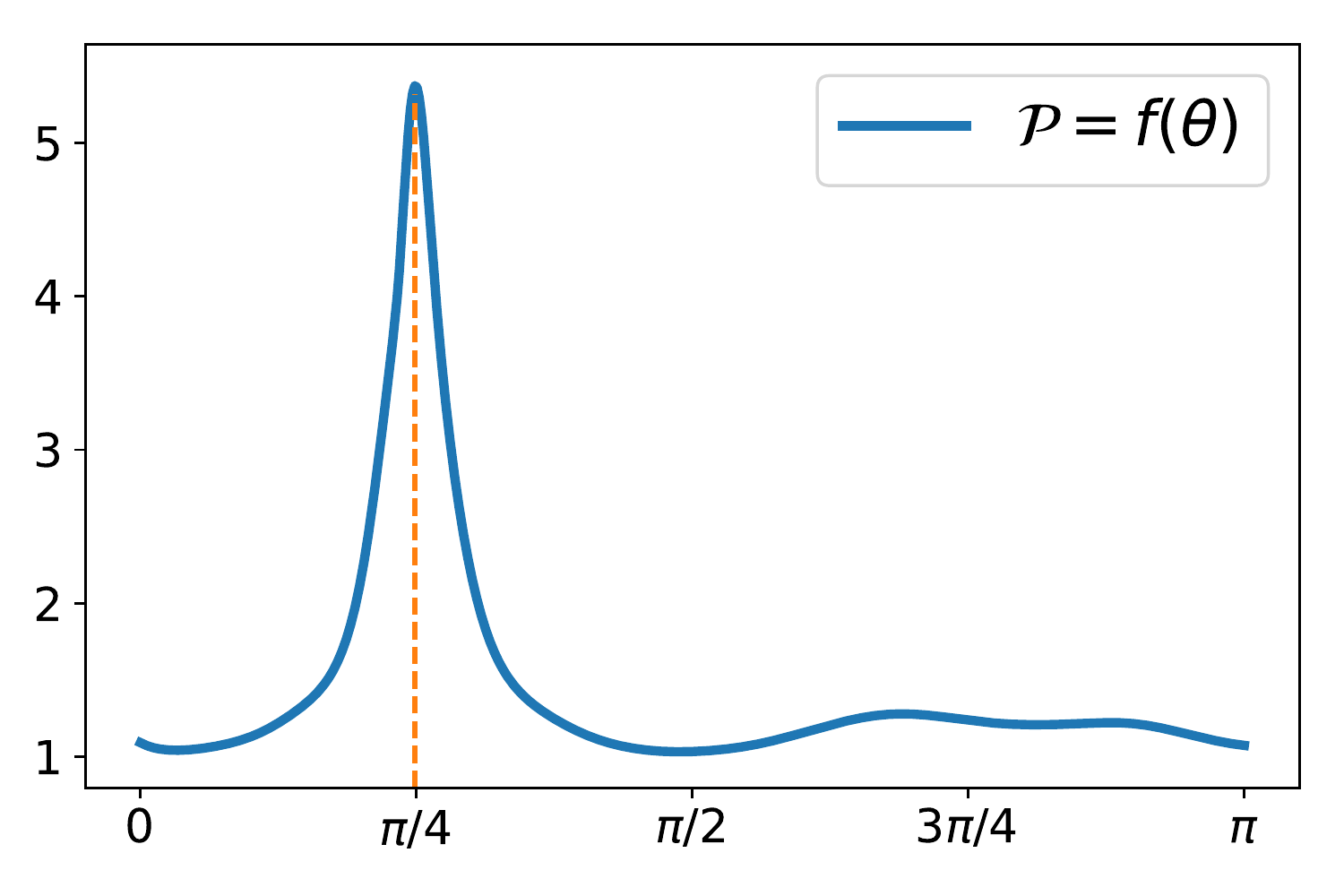}
\end{center}

\caption{ \textbf{(Top Left)} Samples of mixture of three Gaussians. \textbf{(Top right)} Whiten samples of Gaussian mixture on the left. \textbf{(Bottom)} Plot of the Poincaré constant of the projected samples on a line of angle $\theta$.}
\label{fig:three_gaussians}
\end{figure}

We show (Figure~\ref{fig:three_gaussians}) one synthetic example for which our algorithm found a good reaction coordinate. The samples are taken from a mixture of three Gaussians of means $(0,0), (1,1) $ and $(2,2)$ and covariance $\Sigma = \sigma^2 I$ where $\sigma = 0.1$. The three means are aligned along a line which makes an angle $\theta = \pi/4$ with respect to the $x$-axis: one expects the algorithm to identify this direction as the most difficult one to sample (see left and center plots of Figure~\ref{fig:three_gaussians}). With a few restarts, our algorithm indeed finds the largest Poincaré constant for a projection onto the line parametrized by $\theta = \pi / 4$.


\section{Conclusion and Perspectives}

In this paper, we have presented an efficient method to estimate the Poincaré constant of a distribution from independent samples, paving the way to learn low-dimensional marginals that are hard to sample (corresponding to the image measure of so-called reaction coordinates). While we have focused on linear projections, learning non-linear projections is important in molecular dynamics and it can readily be done with a well-defined parametrization of the non-linear function and then applied to real data sets, where this would lead to accelerated sampling~\citep{Lelievre2015}. Finally, it would be interesting to apply our framework to Bayesian inference~\citep{Chopin2012} and leverage the knowledge of reaction coordinates to accelerate sampling methods.

\bibliographystyle{plain}

\bibliography{Poincare_estimation.bib}

\clearpage

\onecolumn 

\appendix

\textbf{\huge Appendix}

$ $

\vspace{0.2cm}

The Appendix is organized as follows. In Section \ref{sec:proofs_of_prop_1/2} we prove Propositions~\ref{prop:Spectral_characterization} and~\ref{prop:hat_P_to_P_lambda}. Section~\ref{sec:analysis_of_bias} is devoted to the analysis of the bias. We study spectral properties of the diffusion operator $L$ to give sufficient and general conditions for the compactness assumption from Theorem~\ref{thm:statistical_consistency} and Proposition~\ref{prop:P_lambda_to_P} to hold. Section~\ref{sec:technical_inequalities} provides concentration inequalities for the operators involved in Proposition~\ref{prop:hat_P_to_P_lambda}. We conclude by Section~\ref{sec:gaussian_bias} that gives explicit rates of convergence for the bias when~$\mu$ is a 1-D Gaussian (this result could be easily extended to higher dimensional Gaussians).

\section{Proofs of Proposition \ref{prop:Spectral_characterization} and \ref{prop:hat_P_to_P_lambda}}
\label{sec:proofs_of_prop_1/2}

Recall that $L_0^2(\mu)$ is the subspace of $L^2(\mu)$ of zero mean functions: $L_0^2(\mu):= \{ f \in L^2(\mu),\ \int f(x)d\mu(x) = 0 \}$ and that we similarly defined $\h_0 := \h \cap L_0^2(\mu)$. Let us also denote by $\R \mathds{1}$  , the set of constant functions.

\begin{proof}[Proof of Proposition \ref{prop:Spectral_characterization}]

The proof is simply the following reformulation of Equation~\eqref{eq:Poicare_constant}. Under assumption \asm{asm:density}:
\begin{align*}
\Poinca_\mu &= \sup_{f \in H^1(\mu) \setminus \R \mathds{1}} \frac{\int_{\R^d}f(x)^2 d\mu(x) - \left(\int_{\R^d}f(x) d\mu(x)\right)^2 }{\int_{\R^d}\|\nabla f(x)\|^2 d\mu(x)} \\
&=  \sup_{f \in \h \setminus \R \mathds{1}} \frac{\int_{\R^d}f(x)^2 d\mu(x) - \left(\int_{\R^d}f(x) d\mu(x)\right)^2 }{\int_{\R^d}\|\nabla f(x)\|^2 d\mu(x)} \\
&=  \sup_{f \in \h_0 \setminus \{0\}} \frac{\int_{\R^d}f(x)^2 d\mu(x) - \left(\int_{\R^d}f(x) d\mu(x)\right)^2 }{\int_{\R^d}\|\nabla f(x)\|^2 d\mu(x)}.
\end{align*}
We then simply note that
\begin{align*}
\left(\int_{\R^d}f(x) d\mu(x)\right)^2 &= \left(\left\langle f,\int_{\R^d}K_x d\mu(x) \right\rangle_\h \right)^2 = \langle f,m \rangle_\h^2 = \langle f,(m \otimes m) f \rangle_\h.
\end{align*}
Similarly,
\begin{align*}
\int_{\R^d}f(x)^2 d\mu(x) &= \langle f,\C f \rangle_\h \quad \textrm{and} \quad  \int_{\R^d}\|\nabla f(x)\|^2 d\mu(x) = \langle f,\D f \rangle_\h.
\end{align*}
Note here that $\mathrm{Ker} (\Delta) \subset \mathrm{Ker} (C)$. Indeed, if $f \in \mathrm{Ker} (\Delta)$, then $\langle f, \Delta f \rangle_\h = 0$. Hence, $\mu$-almost everywhere, $\nabla f = 0$ so that $f$ is constant and $C f = 0$. Note also the previous reasoning shows that $\mathrm{Ker} (\Delta) $ is the subset of $\h$ made of constant functions, and $(\mathrm{Ker} (\Delta))^\perp = \h \cap L^2_0(\mu)= \h_0$.

Thus we can write,
\begin{align*}
\Poinca_\mu &= \sup_{f \in \h \setminus \mathrm{Ker} (\Delta)} \frac{\langle f,(\C - m \otimes m) f \rangle_\h}{\langle f,\D f \rangle_\h} = \left\|\D^{-1/2} C \D^{-1/2}\right\|,
\end{align*}
where we consider $\Delta^{-1}$ as the inverse of $\Delta$ restricted to $\left(\mathrm{Ker} (\Delta) \right)^\perp$ and thus get Proposition \ref{prop:Spectral_characterization}.
\end{proof}

\begin{proof}[Proof of Proposition \ref{prop:hat_P_to_P_lambda}] 

We refer to Lemmas \ref{lemma:concentration_C} and \ref{lemma:concentration_delta} in Section~\ref{sec:technical_inequalities} for the explicit bounds. We have the following inequalities:

\begin{align*}
\left|\widehat{\Poinca}_\mu-\Poinca^\lambda_\mu\right| &= \left|\left\|\widehat{\D}_\lambda^{-1/2} \widehat{C} \widehat{\D}_\lambda^{-1/2}\right\| - \left\|\D_\lambda^{-1/2} C \D_\lambda^{-1/2}\right\|\right| \\
&\leqslant \left|\left\|\widehat{\D}_\lambda^{-1/2} \widehat{C} \widehat{\D}_\lambda^{-1/2}\right\| - \left\|\widehat{\D}_\lambda^{-1/2} C \widehat{\D}_\lambda^{-1/2}\right\|\right| + \left|\left\|\widehat{\D}_\lambda^{-1/2} C \widehat{\D}_\lambda^{-1/2}\right\| -  \left\|\D_\lambda^{-1/2} C \D_\lambda^{-1/2}\right\|\right| \\
&\leqslant \left\|\widehat{\D}_\lambda^{-1/2} (\widehat{C}-C) \widehat{\D}_\lambda^{-1/2}\right\| + \left|\left\|C^{1/2} \widehat{\D}_\lambda^{-1} C^{1/2} \right\| -  \left\|C^{1/2} \D_\lambda^{-1} C^{1/2} \right\|\right| \\
&\leqslant \left\|\widehat{\D}_\lambda^{-1/2} (\widehat{C}-C) \widehat{\D}_\lambda^{-1/2}\right\| + \left\|C^{1/2} (\widehat{\D}_\lambda^{-1}-\D_\lambda^{-1}) C^{1/2} \right\|.
\end{align*}
Consider an event where the estimates of Lemmas \ref{lemma:concentration_C}, \ref{lemma:concentration_delta} and \ref{lemma:magic_2} hold for a given value of $\delta > 0$. A simple computation shows that this event has a probability $1-3\delta$ at least. We study the two terms above separately. First, provided that $n \geqslant 15 \Fla \log \frac{4\,\mathrm{Tr } \D }{\lambda \delta }$ and $\lambda \in (0, \|\Delta\|]$ in order to use Lemmas~\ref{lemma:concentration_delta} and \ref{lemma:magic_2},
\begin{align*}
\left\|\widehat{\D}_\lambda^{-1/2} (\widehat{C}-C) \widehat{\D}_\lambda^{-1/2}\right\| &= \left\|\widehat{\D}_\lambda^{-1/2} \D_\lambda^{1/2} \D_\lambda^{-1/2} (\widehat{C}-C) \D_\lambda^{-1/2} \D_\lambda^{1/2} \widehat{\D}_\lambda^{-1/2}\right\| \\
&\leqslant \underbrace{\left\|\widehat{\D}_\lambda^{-1/2} \D_\lambda^{1/2} \right\|^2}_{\textrm{Lemma}\ \ref{lemma:magic_2}} \underbrace{\left\|\D_\lambda^{-1/2}  (\widehat{C}-C) \D_\lambda^{-1/2}\right\|}_{\textrm{Lemma}\ \ref{lemma:concentration_C}} \\
&\leqslant 2 \, (\textrm{Lemma}\ \ref{lemma:concentration_C}).
\end{align*}
For the second term,
\begin{align*}
\left\|C^{1/2} (\widehat{\D}_\lambda^{-1}-\D_\lambda^{-1}) C^{1/2} \right\| &= \left\|C^{1/2} \widehat{\D}_\lambda^{-1}  (\D-\widehat{\D}) \D_\lambda^{-1} C^{1/2} \right\| \\
&= \left\|C^{1/2} \D_\lambda^{-1/2} \D_\lambda^{1/2} \widehat{\D}_\lambda^{-1} \D_\lambda^{1/2} \D_\lambda^{-1/2}  (\D-\widehat{\D}) \D_\lambda^{-1/2} \D_\lambda^{-1/2} C^{1/2} \right\| \\
&\leqslant \underbrace{\left\|\widehat{\D}_\lambda^{-1/2} \D_\lambda^{1/2} \right\|^2}_{\textrm{Lemma}\ \ref{lemma:magic_2}}  \underbrace{\left\|C^{1/2} \D_\lambda^{-1/2}\right\|^2}_{\Poinca_\mu^\lambda } \underbrace{\left\| \D_\lambda^{-1/2}  (\D-\widehat{\D}) \D_\lambda^{-1/2} \right\|}_{\textrm{Lemma}\ \ref{lemma:concentration_delta}} \\
&\leqslant 2\cdot \Poinca^\lambda_\mu \cdot(\textrm{Lemma}\ \ref{lemma:concentration_delta}). 
\end{align*}
The leading order term in the estimate of Lemma~\ref{lemma:concentration_delta} is of order $\left(\frac{2\mathcal{K}_d \log (4 \tr \D/\lambda\delta )}{\lambda n}\right)^{1/2}$  whereas the leading one in Lemma~\ref{lemma:concentration_C} is of order $\frac{8 \mathcal{K} \log (2/\delta )}{\lambda\sqrt{ n}}$. Hence, the latter is the dominant term in the final estimation.
\end{proof}
\section{Analysis of the bias: convergence of the regularized Poincaré constant to the true one}
\label{sec:analysis_of_bias}

We begin this section by proving Proposition \ref{prop:P_lambda_to_P}. We then investigate the compactness condition required in the assumptions of Proposition \ref{prop:P_lambda_to_P} by studying the spectral properties of the diffusion operator $L$. In Proposition 
\ref{prop:bias_compact}, we derive, under some general assumption on the RKHS and usual growth conditions on $V$, some convergence rate for the bias term.

\subsection{General condition for consistency: proof of Proposition \ref{prop:P_lambda_to_P}}

To prove Proposition~\ref{prop:P_lambda_to_P}, we first need a general result on operator norm convergence.

\begin{lemma}
\label{le:compact_convergence}
Let $\h$ be a Hilbert space and suppose that $(A_n)_{n \geqslant 0}$ is a family of bounded operators such that $\forall n \in \N$, $\|A_n\| \leqslant 1$ and $\forall f \in \h$, $A_n f \xrightarrow{n\to\infty} A f$. Suppose also that $B$ is a compact operator. Then, in operator norm, $$ A_n B A_n^* \xrightarrow{n\to\infty} A B A^*.$$
\end{lemma}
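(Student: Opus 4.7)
The plan is to invoke the classical fact that composition with a compact operator turns strong operator convergence into norm convergence, and then handle the asymmetry between $A_n$ and $A_n^*$ carefully. First, from $\|A_n\| \leqslant 1$ and $A_n \to A$ strongly, one gets $\|A\| \leqslant 1$ by lower semicontinuity of the norm. Next I would use the algebraic identity
\[
A_n B A_n^* - A B A^* \;=\; (A_n - A)\, B\, A_n^* \;+\; A\, B\, (A_n^* - A^*),
\]
and estimate each of the two pieces separately in operator norm.

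For the first piece, the key observation is that $B$ compact means $B$ maps the closed unit ball of $\h$ into a relatively compact set. On any relatively compact set, pointwise convergence of the equicontinuous family $(A_n - A)$ (equicontinuity coming from the uniform bound $\|A_n - A\| \leqslant 2$) is automatically uniform, by a standard $\varepsilon$-net argument: cover the compact image by finitely many balls, control each center by strong convergence, and use uniform boundedness for the remainder. This gives $\|(A_n - A)\, B\| \to 0$. Combined with $\|A_n^*\| = \|A_n\| \leqslant 1$, one concludes $\|(A_n - A)\, B\, A_n^*\| \to 0$.

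The second piece is where the main subtlety lies, because strong convergence $A_n \to A$ does not in general imply strong convergence $A_n^* \to A^*$ (only weak$^*$ convergence), so the same direct argument does not apply. The workaround is to note that $B^*$ is also compact, so the previous paragraph, applied with $B^*$ in place of $B$, yields $\|(A_n - A)\, B^*\| \to 0$. Taking adjoints preserves the operator norm, hence $\|B\, (A_n^* - A^*)\| \to 0$ as well. Combining with $\|A\| \leqslant 1$ gives $\|A\, B\, (A_n^* - A^*)\| \to 0$, which finishes the proof.

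The main obstacle is exactly this asymmetry: without the compactness of $B$ (and hence $B^*$), the statement would be false in general, and without the adjoint trick the second term cannot be handled. Everything else is a routine triangle inequality together with the standard ``strong $\times$ compact $=$ norm'' lemma.
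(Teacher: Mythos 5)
Your proof is correct, but it takes a genuinely different route from the paper's. The paper approximates the compact operator $B$ in norm by a finite-rank operator $B_{n_\varepsilon}=\sum_{i=1}^{n_\varepsilon} b_i \langle f_i,\cdot\rangle g_i$ and exploits the rank-one structure: $A_n\left(\langle f_i,\cdot\rangle g_i\right)A_n^{*}=\langle A_n f_i,\cdot\rangle A_n g_i$, so the adjoint acting on the right is absorbed into an inner product and strong convergence of $A_n$ alone yields norm convergence of each summand; a triangle inequality using $\|B-B_{n_\varepsilon}\|\leqslant \varepsilon/2$ and $\|A_n\|,\|A\|\leqslant 1$ then concludes. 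You instead split $A_n B A_n^{*}-ABA^{*}=(A_n-A)BA_n^{*}+AB(A_n^{*}-A^{*})$, prove the ``strong convergence composed with compact gives norm convergence'' fact by an $\varepsilon$-net argument on the relatively compact set $B(\text{unit ball})$, and handle the adjoint term by duality: $B^{*}$ is compact, so $\|(A_n-A)B^{*}\|\to 0$, and taking adjoints gives $\|B(A_n^{*}-A^{*})\|\to 0$; together with $\|A\|\leqslant 1$ (which you correctly justify by lower semicontinuity) this finishes the argument. Both proofs are complete. Yours makes explicit the real subtlety of the statement --- strong convergence does not pass to adjoints --- which the paper's rank-one computation bypasses silently, and it avoids invoking the approximation of compact operators by finite-rank ones (automatic in a Hilbert space via the singular value decomposition, but a genuinely stronger property in general spaces); the paper's route, in exchange, treats both sides of $B$ in a single stroke and keeps the bookkeeping down to one $\varepsilon/2+\varepsilon/2$ estimate.
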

\begin{proof}
Let $\varepsilon > 0$. As $B$ is compact, it can be approximated by a finite rank operator $B_{_{n_\varepsilon}} = \sum_{i=1}^{n_\varepsilon} b_i \langle f_i, \cdot \rangle g_i $, where  $(f_{i})_i$  and $(g_{i})_i$  are orthonormal bases, and  $(b_{i})_i$  is a sequence of nonnegative numbers with limit zero (singular values of the operator). More precisely, $n_\varepsilon$ is chosen so that 
$$ \| B - B_{_{n_\varepsilon}} \| \leqslant \frac{\varepsilon}{2}.$$
Moreover, $\varepsilon$ being fixed, $A_n B_{_{n_\varepsilon}} A_n^* = \sum_{i=1}^{n_\varepsilon} b_i \langle A_n f_i, \cdot \rangle A_n g_i \underset{n\infty}{\longrightarrow} \sum_{i=1}^{n_\varepsilon} b_i \langle A f_i, \cdot \rangle A g_i = A B_{_{n_\varepsilon}} A^* $ in operator norm, so that, for $n \geqslant N_\varepsilon$, with $N_\varepsilon \geqslant n_\varepsilon$ sufficiently large, $\|A_n B_{_{n_\varepsilon}} A_n^* - A B_{_{n_\varepsilon}} A^*\| \leqslant \frac{\varepsilon}{2}$. Finally, as $\|A\| \leqslant 1$, it holds, for $n \geqslant N_\varepsilon$
\begin{align*}
\|A_n B_{_{n_\varepsilon}} A_n^* - A B A^*\| &\leqslant \| A_n B_{_{n_\varepsilon}} A_n^* - A B_{_{n_\varepsilon}} A^* \| + \| A ( B_{_{n_\varepsilon}} - B) A^* \| \\
& \leqslant   \| A_n B_{_{n_\varepsilon}} A_n^* - A B_{_{n_\varepsilon}} A^* \| + \| B_{_{n_\varepsilon}} - B \| \leqslant \varepsilon.
 \end{align*} 
This proves the convergence in operator norm of $A_n B A_n^*$ to $A B A^*$ when $n$ goes to infinity.
\end{proof}

We can now prove Proposition~\ref{prop:P_lambda_to_P}.

\begin{proof}[Proof of Proposition~\ref{prop:P_lambda_to_P}]
Let $\lambda > 0$, we want to show that $$\Poinca_\mu^\lambda = \|\Delta_\lambda^{-1/2}C\Delta_\lambda^{-1/2}\| \underset{\lambda \rightarrow 0}{\longrightarrow}  \|\Delta^{-1/2}C\Delta^{-1/2}\| = \Poinca_\mu. $$
Actually, with Lemma~\ref{le:compact_convergence}, we will show a stronger result which is the norm convergence of the operator $\Delta_\lambda^{-1/2}C\Delta_\lambda^{-1/2}$ to $\Delta^{-1/2}C\Delta^{-1/2}$. Indeed, denoting by $B = \Delta^{-1/2}C\Delta^{-1/2}$ and by $A_\lambda = \Delta_\lambda^{1/2}\Delta^{-1/2}$ both defined on $\h_0$, we have $\Delta_\lambda^{-1/2}C\Delta_\lambda^{-1/2} = A_\lambda B A_\lambda^*$ with $B$ compact and $\|A_\lambda\| \leqslant 1$. Furthermore, let $(\phi_i)_{i \in \N}$ be an orthonormal family of eigenvectors of the compact operator $\D$ associated to eigenvalues $(\nu_i)_{i \in \N}$. Then we can write, for any $ f \in \h_0$, $$A_\lambda f = \Delta_\lambda^{1/2}\Delta^{-1/2} f = \sum_{i=0}^\infty \sqrt{\frac{\lambda + \nu_i}{\nu_i}} \langle f, \phi_i\rangle_\h \, \phi_i \underset{\lambda \rightarrow 0}{\longrightarrow} f. $$ Hence by applying Lemma~\ref{le:compact_convergence}, we have the convergence in operator norm of $\Delta_\lambda^{-1/2}C\Delta_\lambda^{-1/2}$ to $\Delta^{-1/2}C\Delta^{-1/2}$, hence in particular the convergence of the norms of the operators.

\end{proof}

\subsection{Introduction of the operator \texorpdfstring{$L$}{L}}

\noindent In all this section we focus on a distribution $d\mu$ of the form $d\mu(x) = \mathrm{e}^{-V(x)}dx$.

Let us give first a characterization of the function that allows to recover the Poincaré constant, i.e., the function in $H^1(\mu)$ that minimizes $\frac{\int_{\R^d}\|\nabla f(x)\|^2 d\mu(x)}{\int_{\R^d}f(x)^2 d\mu(x) - \left(\int_{\R^d}f(x) d\mu(x)\right)^2 }$. We call $f_*$ this function. We recall that we denote by $\Delta^L$ the standard Laplacian in $\R^d$: $\forall f \in H^1(\mu)$, $\Delta^L f = \sum_{i=1}^d \frac{\partial^2 f_i}{\partial^2 xi}$. Let us define the operator $\forall f \in H^1(\mu)$, $Lf = - \Delta^L f + \langle \nabla V, \nabla f \rangle $, which is the opposite of the infinitesimal generator of the dynamics \eqref{eq:langevin}. We can verify that it is symmetric in $L^2(\mu)$. Indeed by integrations by parts for any $\forall f,g \in C^\infty_c$,
\begin{align*}
\langle Lf , g\rangle_{L^2(\mu)} &=  \int (Lf)(x)g(x)d\mu(x) \\
&= -\int \Delta^L f(x) g(x) \mathrm{e}^{-V(x)} dx +  \int \langle \nabla V(x), \nabla f(x) \rangle g(x) \mathrm{e}^{-V(x)} dx \\
&= \int \left\langle \nabla f(x), \nabla\left(g(x) \mathrm{e}^{-V(x)}\right) \right\rangle dx +  \int \langle \nabla V(x), \nabla f(x) \rangle g(x) \mathrm{e}^{-V(x)} dx \\
&= \int \langle \nabla f(x), \nabla g(x) \rangle \mathrm{e}^{-V(x)} dx - \int \langle \nabla f(x), \nabla V(x) \rangle g(x) \mathrm{e}^{-V(x)} dx \\
 &\hspace*{4.6cm}+  \int \langle \nabla V(x), \nabla f(x) \rangle g(x) \mathrm{e}^{-V(x)} dx \\
&= \int \langle \nabla f(x), \nabla g(x) \rangle d\mu(x).
\end{align*}
The last equality being totally symmetric in $f$ and $g$, we have the symmetry of the operator $L$:  $\langle Lf , g\rangle_{L^2(\mu)} = \int \langle \nabla f, \nabla g \rangle d\mu = \langle f , Lg\rangle_{L^2(\mu)}$ (for the self-adjointness we refer to \citep{bakry2014}). Remark that the same calculation shows that $ \nabla^* = -\mathrm{div} + \nabla V \cdot $, hence $L = \nabla^* \cdot \nabla = - \Delta^L + \langle \nabla V, \nabla \cdot \rangle$, where $\nabla^*$ is the adjoint of $\nabla$ in $L^2(\mu)$.

Let us call $\pi$ the orthogonal projector of $L^2(\mu)$ on constant functions: $ \pi f : x \in \R^d \mapsto \int f d\mu$. The problem \eqref{eq:spectral_poinca} then rewrites:
\begin{align}
\label{eq:poinca_operator}
\Poinca^{-1} = \inf_{f \in (H^1(\mu) \cap L_0^2(\mu)) \setminus \{0\}}\frac{\langle Lf,f \rangle_{L^2(\mu)}}{\|(I_{L^2(\mu)}-\pi)f\|^2},
\end{align}
Until the end of this part, to alleviate the notation we omit to mention that the scalar product is the canonical one on $L^2(\mu)$. In the same way, we also denote $\mathds{1} = I_{L^2(\mu)}$.



\subsubsection{Case where  \texorpdfstring{$d\mu$}{mu} has infinite support}

\begin{prop}[Properties of the minimizer]
\label{prop:minimizer}
If $\underset{|x| \rightarrow \infty}{\lim}\frac{1}{4} \left| \nabla V \right|^2 - \frac{1}{2} \Delta^L V = +\infty$, the problem \eqref{eq:poinca_operator} admits a minimizer in $H^1(\mu)$ and every minimizer $f$ is an eigenvector of $L$ associated with the eigenvalue $\Poinca^{-1}$:
\begin{align}
\label{eq:pde_f_star}
Lf = \Poinca^{-1} f.
\end{align} 
\end{prop}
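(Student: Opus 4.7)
The plan is to exploit the hypothesis $\lim_{|x|\to\infty}\bigl(\tfrac{1}{4}|\nabla V|^2 - \tfrac{1}{2}\Delta^L V\bigr) = +\infty$ via the standard Witten transform, which converts $L$ into a Schrödinger operator with confining potential, and then read off the conclusion from the Hilbert--Schmidt spectral theorem applied to $L$.

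First, I would introduce the unitary isomorphism $U : L^2(\mu) \to L^2(\mathbb{R}^d, dx)$ defined by $(Uf)(x) = f(x)\, \mathrm{e}^{-V(x)/2}$. A direct computation (noting that $L = \nabla^* \nabla$ with $\nabla^* = -\mathrm{div} + \nabla V \cdot$ as observed just before the statement) gives the conjugation identity
\begin{equation*}
U L U^{-1} = H := -\Delta^L + W, \qquad W(x) = \tfrac{1}{4}|\nabla V(x)|^2 - \tfrac{1}{2}\Delta^L V(x).
\end{equation*}
The hypothesis is precisely that $W(x) \to +\infty$ as $|x| \to \infty$, so $H$ is a Schrödinger operator with a confining potential. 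It is a classical result (see e.g.\ Reed--Simon, or Helffer) that such an $H$ is essentially self-adjoint on $C_c^\infty$ and has \emph{compact resolvent} on $L^2(\mathrm{d}x)$. Pulling back through $U$, $L$ has compact resolvent on $L^2(\mu)$.

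Next, since $L$ is a nonnegative self-adjoint operator with compact resolvent, its spectrum is a discrete sequence $0 = \lambda_0 < \lambda_1 \leqslant \lambda_2 \leqslant \cdots \to \infty$ with an $L^2(\mu)$-orthonormal basis of eigenfunctions $(e_k)_{k\geqslant 0}$, the ground state being $e_0 = \mathds{1}$ because $L\mathds{1} = 0$ and $\mu$ is a probability measure (so $\mathds{1} \in L^2(\mu)$). The min--max characterization of eigenvalues of self-adjoint operators with compact resolvent gives
\begin{equation*}
\lambda_1 = \inf_{f \in \mathrm{Dom}(L^{1/2}) \cap L_0^2(\mu),\, f\ne 0} \frac{\langle Lf, f\rangle_{L^2(\mu)}}{\|f\|_{L^2(\mu)}^2},
\end{equation*}
and the infimum is attained exactly on the eigenspace $\mathrm{Ker}(L - \lambda_1 I)$. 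Identifying the form domain $\mathrm{Dom}(L^{1/2})$ with $H^1(\mu)$ (again via the conjugation $U$, which identifies form domains of $H$ and $L$, together with the observation that the form $\langle Lf,f\rangle_{L^2(\mu)} = \int \|\nabla f\|^2 d\mu$ extends continuously to $H^1(\mu)$) yields existence of a minimizer in $H^1(\mu)$ for \eqref{eq:poinca_operator} and the identification $\Poinca^{-1} = \lambda_1$.

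Finally, to get the Euler--Lagrange equation \eqref{eq:pde_f_star} for every minimizer $f$: either invoke directly that minimizers of the Rayleigh quotient lie in the corresponding eigenspace (a standard consequence of the spectral theorem and the variational characterization), or argue by perturbation: for any $g \in H^1(\mu) \cap L_0^2(\mu)$ and $\varepsilon \in \mathbb{R}$, expanding $\varepsilon \mapsto \langle L(f+\varepsilon g), f+\varepsilon g\rangle - \Poinca^{-1}\|f+\varepsilon g\|^2$ and using optimality at $\varepsilon = 0$ gives $\langle Lf - \Poinca^{-1}f, g\rangle_{L^2(\mu)} = 0$ for all such $g$, hence $Lf = \Poinca^{-1} f$ in $L_0^2(\mu)$, and since $Lf$ and $f$ both have zero $\mu$-mean this holds in $L^2(\mu)$.

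The main obstacle I anticipate is the functional-analytic bookkeeping: rigorously defining the self-adjoint realization of $L$ (or $H$), checking that its form domain really coincides with $H^1(\mu)$ rather than a strict subset, and verifying the compact resolvent claim for $H$ under the stated growth condition on $W$ (this requires a Persson-type argument or Rellich-type compact embedding of the form domain into $L^2$). The variational existence and the passage to the eigenvalue equation are then routine once the spectral framework is in place.
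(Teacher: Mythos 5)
Your argument is correct in substance, but it is organized differently from the paper's proof, even though both rest on the same underlying fact. You conjugate $L$ itself by the ground-state (Witten) transform $U f = \mathrm{e}^{-V/2}f$, observe that $ULU^{-1}=-\Delta^L+W$ with $W=\tfrac14|\nabla V|^2-\tfrac12\Delta^L V\to+\infty$, conclude that $L$ has compact resolvent, and then read off both existence of a minimizer and the equation $Lf=\Poinca^{-1}f$ from the spectral theorem and the min--max characterization of $\lambda_1$ (with a perturbation argument as an alternative for the Euler--Lagrange step). The paper instead packages the very same compact-resolvent fact as a compact embedding $H^1(\mu)\hookrightarrow L^2(\mu)$ (Lemmas~\ref{lemma:compact_embedding} and~\ref{lemma:schrodinger}, whose proof is precisely your unitary conjugation), runs the direct method of the calculus of variations on a normalized minimizing sequence for \eqref{eq:poinca_operator} (weak $H^1(\mu)$ and strong $L^2(\mu)$ convergence, lower semicontinuity via Cauchy--Schwarz), and only then writes the Euler--Lagrange equation with a Lagrange multiplier identified as $-\Poinca^{-1}$. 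What your route buys is more information in one stroke: discreteness of the whole spectrum of $L$, the identification $\Poinca^{-1}=\lambda_1$, and the fact that minimizers are exactly the $\lambda_1$-eigenfunctions. What it costs is exactly the point you flag yourself: you must fix a self-adjoint realization of $L$ and prove that its form domain is all of $H^1(\mu)$ (equivalently that $C_c^\infty$ is dense in $H^1(\mu)$ for the form norm); if the form domain were a strict subspace, the min--max value could differ from the infimum in \eqref{eq:poinca_operator} taken over $H^1(\mu)$. The paper's direct variational argument sidesteps this domain-identification issue entirely, since Lemma~\ref{lemma:compact_embedding} is stated for $H^1(\mu)$ itself and no operator domain ever enters before the (formal) Euler--Lagrange step. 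So your proof is acceptable as a genuinely different, more operator-theoretic route, provided you actually supply the essential self-adjointness/form-domain step rather than leaving it as an anticipated obstacle; your perturbation argument for the Euler--Lagrange equation is essentially the paper's, up to the remark that the identity $\langle \nabla f,\nabla g\rangle_{L^2(\mu)}=\Poinca^{-1}\langle f,g\rangle_{L^2(\mu)}$, first obtained for $g\in H^1(\mu)\cap L^2_0(\mu)$, extends to all $g\in H^1(\mu)$ because both sides vanish on constants.
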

To prove the existence of a minimizer in $H^1(\mu)$, we need the following lemmas.
\begin{lemma}[Criterion for compact embedding of $H^1(\mu)$ in $L^2(\mu)$]
\label{lemma:compact_embedding}
The injection $H^1(\mu) \hookrightarrow L^2(\mu)$ is compact if and only if the Schrödinger operator $-\Delta^L + \frac{1}{4} \left| \nabla V \right|^2 - \frac{1}{2} \Delta^L V$ has compact resolvent.
\end{lemma}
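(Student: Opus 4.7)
The strategy is to reduce the claim to the standard spectral-theoretic equivalence that a non-negative self-adjoint operator has compact resolvent if and only if its form domain embeds compactly into the ambient Hilbert space. The bridge between $L^2(\mu)$ and $L^2(\R^d, dx)$ will be the \emph{ground state transformation} (or Witten conjugation) that unitarily relates the diffusion generator $L = -\Delta^L + \nabla V\cdot \nabla$ on $L^2(\mu)$ to the Schrödinger operator $H = -\Delta^L + W$ on $L^2(\R^d, dx)$, with $W = \tfrac{1}{4}|\nabla V|^2 - \tfrac{1}{2}\Delta^L V$.

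\emph{Step 1: unitary conjugation.} I would introduce $U : L^2(\mu) \to L^2(\R^d, dx)$ defined by $Uf = e^{-V/2} f$, which is unitary since $d\mu = e^{-V}\,dx$. For $g \in C^\infty_c$, a direct computation using the product rule twice on $f = e^{V/2} g$ gives
\[
\Delta^L\!\left(e^{V/2} g\right) = e^{V/2}\!\left[\Delta^L g + \nabla V \cdot \nabla g + \left(\tfrac{1}{4}|\nabla V|^2 + \tfrac{1}{2}\Delta^L V\right) g\right],
\]
and combining this with $\nabla V \cdot \nabla(e^{V/2} g) = e^{V/2}\bigl(\nabla V \cdot \nabla g + \tfrac{1}{2}|\nabla V|^2 g\bigr)$, the cross terms cancel and one obtains $L(e^{V/2} g) = e^{V/2}\, H g$, i.e.\ $ULU^{-1} = H$ on $C^\infty_c$. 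Since $C^\infty_c$ is a core for both self-adjoint realizations under the standing growth assumption on $V$, the identity extends to the self-adjoint closures, so $L$ and $H$ are unitarily equivalent and share spectral properties (in particular, compactness of the resolvent).

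\emph{Step 2: compact resolvent $\iff$ compact form embedding.} For any non-negative self-adjoint operator $A$ on a Hilbert space $\mathcal{K}$, the map $(A+I)^{-1/2}$ is an isometric isomorphism from $\mathcal{K}$ onto the form domain $\mathcal{D}(A^{1/2})$ equipped with the form norm $\sqrt{\|\cdot\|^2 + \langle A\,\cdot,\cdot\rangle}$. Hence $(A+I)^{-1}$ is compact on $\mathcal{K}$ if and only if $\mathcal{D}(A^{1/2}) \hookrightarrow \mathcal{K}$ is compact, and $A$ has compact resolvent iff $(A+I)^{-1}$ does. Integration by parts in $L^2(\mu)$ gives $\langle(L+I)f,f\rangle_{L^2(\mu)} = \int (f^2 + |\nabla f|^2)\,d\mu = \|f\|_{H^1(\mu)}^2$, so the form domain of $L$ with its form norm coincides with $H^1(\mu)$ with its natural norm. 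Therefore $L$ has compact resolvent on $L^2(\mu)$ iff $H^1(\mu) \hookrightarrow L^2(\mu)$ is compact; combining with the unitary equivalence from Step 1 yields the announced equivalence.

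The main technical obstacle will not be the computation itself but the careful handling of operator domains: one must check that $C^\infty_c$ is a form core for $L$, so that the form domain is indeed all of $H^1(\mu)$ and the conjugation $ULU^{-1} = H$ extends from $C^\infty_c$ to the self-adjoint closures rather than being a merely formal identity on smooth functions. Under the growth hypothesis on $V$ inherited from Proposition \ref{prop:minimizer} — which forces $W \to +\infty$ at infinity — essential self-adjointness of $L$ on $C^\infty_c$ and the Friedrichs-extension machinery proceed in the classical way, but these verifications are the real substance that turns the formal unitary equivalence into an honest identity of self-adjoint operators.
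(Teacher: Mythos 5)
Your outline is correct, and it is in fact the standard argument \emph{behind} the result: the paper does not prove this lemma at all, it simply cites \citep[Proposition~1.3]{gansberger2010embbeding} and \citep[Lemma~XIII.65]{reed2012methods}, and those references are proved exactly by the ground-state (Witten) transformation you describe. Your Step~1 computation checks out: with $Uf = \mathrm{e}^{-V/2}f$ one gets $L(\mathrm{e}^{V/2}g) = \mathrm{e}^{V/2}\bigl(-\Delta^L g + W g\bigr)$ with $W = \tfrac14|\nabla V|^2 - \tfrac12 \Delta^L V$, and Step~2 (compact resolvent of a non-negative self-adjoint operator $\iff$ compact embedding of its form domain, via $(A+I)^{-1/2}$) is the correct abstract ingredient, the same one as in Reed--Simon.

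Two refinements on the domain issues you flag at the end. First, the lemma as stated is an unconditional equivalence, with no growth hypothesis on $V$; invoking essential self-adjointness of $L$ and $H$ on $C^\infty_c$ under the growth condition of Proposition~\ref{prop:minimizer} is therefore slightly off-target (harmless for the paper's application, where that hypothesis is in force, but not needed). The cleaner route is to stay entirely at the level of quadratic forms: $U$ maps $C^\infty_c$ bijectively onto $C^\infty_c$ and intertwines the Dirichlet form $f \mapsto \int |\nabla f|^2 d\mu$ with the Schrödinger form $g \mapsto \int |\nabla g|^2 + W g^2\,dx$ (your pointwise identity, integrated), so the Friedrichs realizations are unitarily equivalent without any self-adjointness discussion. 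Second, the genuinely nontrivial point is then the one you half-identify: one must know that $C^\infty_c$ (equivalently smooth compactly supported functions) is dense in $H^1(\mu)$ for its natural norm, so that the form domain is all of $H^1(\mu)$ and not a strictly smaller $H^1_0$-type closure; for smooth $V$ on all of $\R^d$ this is a known density result for weighted Sobolev spaces, and it — not the conjugation computation — is the step that turns your plan into a complete proof of the stated ``if and only if''.
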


\begin{proof}
See \citep[Proposition 1.3]{gansberger2010embbeding} or \citep[Lemma XIII.65]{reed2012methods}.
\end{proof}

\begin{lemma}[A sufficient condition]
\label{lemma:schrodinger}
If $\ \Phi \in C^\infty$ and $\Phi(x) {\longrightarrow} +\infty$ when $|x| \rightarrow \infty$, the Schrödinger operator $-\Delta^L + \Phi $ on $\R^d$ has compact resolvent.
\end{lemma}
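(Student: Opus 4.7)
The plan is to show that the form domain of $H := -\Delta^L + \Phi$ embeds compactly into $L^2(\R^d)$, which, since $H$ is bounded below, is equivalent to $(H+c)^{-1}$ being compact for sufficiently large $c > 0$.

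Without loss of generality I would assume $\Phi \geq 1$ everywhere (by adding a constant, which only shifts the spectrum and does not affect compactness of the resolvent). Since $\Phi \in C^\infty$ and $\Phi \to +\infty$, the associated symmetric quadratic form $q(f) = \int_{\R^d} |\nabla f|^2 \, dx + \int_{\R^d} \Phi f^2 \, dx$ is nonnegative and closable on $C_c^\infty(\R^d)$, and its closure defines a self-adjoint operator which is precisely $H$ by standard Friedrichs extension arguments. The form domain $Q$ is the Hilbert space $\{f \in H^1(\R^d) : \int \Phi f^2 < \infty\}$ equipped with the norm $\|f\|_Q^2 = q(f) = \|\nabla f\|_{L^2}^2 + \|\sqrt{\Phi} f\|_{L^2}^2$. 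The compactness of the resolvent of $H$ is equivalent to the compact embedding $Q \hookrightarrow L^2(\R^d)$ (see e.g. Reed–Simon vol.~IV, Thm.~XIII.64).

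To establish this compact embedding, I would take a sequence $(f_n)_{n \in \N}$ bounded in $Q$, say $\|f_n\|_Q \leq M$ for all $n$, and show it admits a subsequence converging in $L^2(\R^d)$. First, the restriction of $(f_n)$ to any ball $B_R = \{x : |x| \leq R\}$ is bounded in $H^1(B_R)$ (since $\|\nabla f_n\|_{L^2(B_R)} \leq M$ and $\|f_n\|_{L^2(B_R)}^2 \leq M^2$, using $\Phi \geq 1$). By the classical Rellich–Kondrachov theorem, up to extraction $(f_n)$ converges in $L^2(B_R)$. A diagonal extraction over $R = 1, 2, 3, \ldots$ then yields a subsequence, still denoted $(f_n)$, converging in $L^2(B_R)$ for every $R$.

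The remaining step, and the only one where the confining hypothesis on $\Phi$ is used, is to handle the tail. Given $\varepsilon > 0$, the assumption $\Phi(x) \to +\infty$ allows us to pick $R_\varepsilon$ such that $\Phi(x) \geq 1/\varepsilon$ for all $|x| > R_\varepsilon$. Then, for any $n, m$,
\begin{equation*}
\int_{|x| > R_\varepsilon} (f_n - f_m)^2 \, dx \leq \varepsilon \int_{|x| > R_\varepsilon} \Phi (f_n - f_m)^2 \, dx \leq \varepsilon \, \|f_n - f_m\|_Q^2 \leq 4 \varepsilon M^2.
\end{equation*}
Combined with the $L^2(B_{R_\varepsilon})$ convergence, this shows that $(f_n)$ is Cauchy in $L^2(\R^d)$, hence convergent. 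This proves the compact embedding $Q \hookrightarrow L^2(\R^d)$ and thereby the compactness of the resolvent.

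The main obstacle is essentially bookkeeping: making sure the form-domain framework is correctly set up so that compact embedding of the form domain really does imply compactness of the resolvent. The analytic heart of the argument, namely combining Rellich–Kondrachov on balls with the confinement $\Phi \to \infty$ to control the tails, is short and standard.
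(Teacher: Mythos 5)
Your proof is correct. The paper does not actually prove this lemma itself: it simply cites Reed--Simon (Lemma XIII.67) and Helffer, and your argument is essentially the standard proof behind those citations --- the form criterion (compact resolvent if and only if the form domain embeds compactly in $L^2$, Reed--Simon Thm.~XIII.64), Rellich--Kondrachov on balls combined with a diagonal extraction, and the confinement $\Phi(x)\to+\infty$ to control the tails. The only step you gloss over is the identification of $H$ with the Friedrichs extension and of its form domain with $\{f\in H^1(\R^d):\int \Phi f^2<\infty\}$; for $\Phi$ smooth and bounded below this is standard (the operator is in fact essentially self-adjoint on $C_c^\infty(\R^d)$), and in any case your compactness argument applies verbatim to elements of the closure of $C_c^\infty(\R^d)$ in the form norm, since the bounds $\|\nabla f_n\|_{L^2}\leqslant M$ and $\|\sqrt{\Phi}f_n\|_{L^2}\leqslant M$ pass to that closure, so nothing essential is missing.
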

\begin{proof}
See \citep[Section 3]{Helffer2005witten} or \citep[Lemma XIII.67]{reed2012methods}.
\end{proof}
Now we can prove Proposition \ref{prop:minimizer}.
\begin{proof}[Proof of Proposition \ref{prop:minimizer}] 

We first prove that \eqref{eq:poinca_operator} admits a minimizer in $H^1(\mu)$. Indeed, we have, 
$$ \Poinca^{-1} = \inf_{f \in  (H^1 \cap L_0^2) \setminus \{0\}}\frac{\langle Lf,f \rangle_{L^2(\mu)}}{\|(\mathds{1}-\pi)f\|^2} = \inf_{f \in (H^1 \cap L_0^2) \setminus \{0\}}J(f),\ \textrm{ where }\, J(f):= \frac{\|\nabla f\|^2}{\|f\|^2}. $$
Let $(f_n)_{n\geqslant 0}$ be a sequence of functions in $H^1_0(\mu)$ equipped with the natural $H^1$-norm such that $(J(f_n))_{n\geqslant 0}$ converges to $\Poinca^{-1}$. As the problem in invariant by rescaling of $f$, we can assume that $\forall n \geqslant 0, \ \|f_n\|_{L^2(\mu)}^2=1$. Hence $J(f_n) = \|\nabla f_n\|_{L^2(\mu)}^2$ converges (to $\Poinca^{-1}$). In particular $\|\nabla f_n\|_{L^2(\mu)}^2$ is bounded in $L^2(\mu)$, hence $(f_n)_{n\geqslant 0}$ is bounded in $H^1(\mu)$. Since by Lemma \ref{lemma:compact_embedding} and \ref{lemma:schrodinger} we have a compact injection of $H^1(\mu)$ in $L^2(\mu)$, it holds, upon extracting a subsequence, that there exists $f \in H^1(\mu)$ such that 
\begin{align*}
\begin{cases}
f_n \hspace*{-0.15cm}&\rightarrow f \qquad \textrm{ strongly in }L^2(\mu) \\
 f_n \hspace*{-0.15cm}&\rightharpoonup  f \hspace*{0.65cm} \textrm{ weakly in }H^1(\mu).
\end{cases}
\end{align*}
Thanks to the strong ${L^2(\mu)}$ convergence, $\|f\|^2 = \underset{n \infty}{\textrm{lim }} \|f_n\|^2 = 1$. By the Cauchy-Schwarz inequality and then taking the limit  $n \rightarrow+\infty $, $$\|\nabla f\|^2=\underset{n \infty}{\textrm{lim }}\langle \nabla f_n,\nabla f \rangle \leqslant\underset{n \infty}{\textrm{lim }} \|\nabla f\|\|\nabla f_n\| = \|\nabla f\| \Poinca^{-1}. $$
Therefore, $ \|\nabla f\| \leqslant \Poinca^{-1/2}$ which implies that $J(f) \leqslant \Poinca^{-1}$, and so $J(f) = \Poinca^{-1}$. This shows that $f$ is a minimizer of $J$.

Let us next prove the PDE characterization of minimizers. A necessary condition on a minimizer $f_*$ of the problem $\inf_{f \in H^1(\mu)} \{\|\nabla f\|_{L^2(\mu)},\ \|f\|^2 = 1 \}$ is to satisfy the following Euler-Lagrange equation: there exists $\beta \in \R$ such that:
\begin{align*}
L f_* + \beta f_* &= 0.
\end{align*}
Plugging this into \eqref{eq:poinca_operator}, we have: $\Poinca^{-1} = \langle Lf_*,f_* \rangle =  -\beta \langle f_*,f_* \rangle = -\beta \|f_*\|^2_2= -\beta $. Finally, the equation satisfied by $f_*$ is:
\begin{align*}
Lf = -\Delta^L f_* + \langle \nabla V,\nabla f_* \rangle = \Poinca^{-1} f_*,
\end{align*}
which concludes the proof.
\end{proof}

\subsubsection{Case where \texorpdfstring{$d\mu$}{mu} has compact support}

We suppose in this section that $d\mu$ has a compact support included in $\Omega$. Without loss of generality we can take a set $\Omega$ with a $C^\infty$ smooth boundary $\partial \Omega$. In this case, without changing the result of the variational problem, we can restrict ourselves to functions that vanish at the boundary, namely the Sobolev space $H^1_D (\R^d,d\mu) = \left\{f \in H^1(\mu) \textrm{ s.t. } f_{|\partial \Omega} = 0  \right\}$. Note that, as $V$ is smooth, $H^1(\mu) \supset H^1 (\R^d,d\lambda)$ the usual "flat" space equipped with $d\lambda$, the Lebesgue measure. Note also that only in this section the domain of the operator $L$ is $H^2 \cap H^1_D$.

\begin{prop}[Properties of the minimizer in the compact support case]
\label{prop:minimizer_compact}
The problem \eqref{eq:poinca_operator} admits a minimizer in $H_D^1$ and every minimizer $f$ satisfies the partial differential equation:
\begin{align}
\label{eq:minimizer_compact}
L f = \Poinca^{-1} f.
\end{align} 
\end{prop}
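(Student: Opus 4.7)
My plan is to imitate the strategy used for Proposition~\ref{prop:minimizer} in the previous section, where compactness of the embedding $H^1(\mu) \hookrightarrow L^2(\mu)$ was the central analytic ingredient. In the present compact-support setting this compactness is essentially free: since $\Omega$ is bounded with smooth boundary and $V \in C^\infty(\overline{\Omega})$, the density $\mathrm{e}^{-V}$ is bounded above and below by positive constants on $\Omega$, so the norms of $H^1(\mu)$ and of the usual flat $H^1(\Omega, d\lambda)$ are equivalent. Consequently the Rellich--Kondrachov theorem gives directly the compact injection $H^1_D \hookrightarrow L^2(\mu)$, and one does not need the Schrödinger operator argument (Lemmas~\ref{lemma:compact_embedding} and \ref{lemma:schrodinger}) that was required in the unbounded case.

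For existence, I would apply the direct method. Rewrite \eqref{eq:poinca_operator} as $\Poinca^{-1} = \inf\{\|\nabla f\|^2_{L^2(\mu)} : f \in H^1_D \cap L^2_0(\mu),\ \|f\|_{L^2(\mu)}=1\}$. Pick a minimizing sequence $(f_n)_{n \geqslant 0}$, which is bounded in $H^1_D$ by construction. By the compact embedding above, one extracts a subsequence such that $f_n \to f$ strongly in $L^2(\mu)$ and $f_n \rightharpoonup f$ weakly in $H^1_D$; the strong $L^2(\mu)$-limit preserves the constraints $\|f\|_{L^2(\mu)}=1$ and $\int f\,d\mu=0$, and weak lower semi-continuity of the norm yields $\|\nabla f\|^2 \leqslant \liminf \|\nabla f_n\|^2 = \Poinca^{-1}$. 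Hence $f$ is a minimizer, and closedness of $H^1_D$ under weak convergence (since the Dirichlet trace is a continuous linear functional) places $f$ in $H^1_D$.

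To derive the PDE, I would write the Euler--Lagrange equation for the constrained minimization: there exist Lagrange multipliers $\alpha,\beta \in \R$ such that for every test function $\varphi \in C^\infty_c(\Omega)$,
\begin{align*}
\int \nabla f \cdot \nabla \varphi \, d\mu = \alpha \int f \varphi \, d\mu + \beta \int \varphi \, d\mu.
\end{align*}
Using the integration-by-parts identity $\int \nabla f \cdot \nabla \varphi \, d\mu = \int (Lf)\varphi \, d\mu$ valid for smooth compactly supported $\varphi$, and testing against the constant function (i.e., taking the mean of both sides) to identify $\beta$ through the zero-mean constraint on $f$, one gets $L f = -\alpha f$ in the weak sense. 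Evaluating $\Poinca^{-1} = \int (Lf) f \, d\mu = -\alpha \|f\|^2 = -\alpha$ fixes the multiplier, giving $Lf = \Poinca^{-1} f$. Elliptic regularity for $L$ on $\Omega$ then upgrades this weak solution to the classical PDE on the smooth domain with Dirichlet data on $\partial \Omega$.

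The main subtle point, and the only place where the compact case genuinely differs from the previous one, is justifying the restriction to $H^1_D$ rather than the full $H^1(\mu)$ without changing the infimum: one must check that the admissible boundary values do not lower the Rayleigh ratio, which is what the paper asserts at the beginning of the subsection. Once that reduction is granted, everything else is a standard direct-method plus Euler--Lagrange argument, with Rellich--Kondrachov replacing the Schrödinger-operator compactness lemma.
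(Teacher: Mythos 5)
Your argument is essentially the paper's own proof: the paper simply notes that the proof of Proposition~\ref{prop:minimizer} carries over verbatim because $H^1_D$ embeds compactly in $L^2$ without any growth assumption on $V$, which is exactly the Rellich--Kondrachov reduction you spell out, followed by the same direct-method and Euler--Lagrange steps. Your additional care with the multiplier for the zero-mean constraint and the remark on restricting to $H^1_D$ are refinements of detail, not a different route.
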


\begin{proof}
The proof is exactly the same than the one of Proposition \ref{prop:minimizer} since $H^1_D$ can  be compactly injected in $L^2$ without any additional assumption on $V$.
\end{proof}

Let us take in this section $\h = H^d(\R^d,\,d\lambda)$, which is the RKHS associated to the kernel $k(x,x') = \mathrm{e}^{-\left\|x-x'\right\|}$. As $f_*$ satisfies \eqref{eq:minimizer_compact}, from regularity properties of elliptic PDEs, we infer that $f_*$ is $C^\infty(\overline{\Omega})$. By the Whitney extension theorem \citep{Whitney1932}, we can extend $f_*$ defined on $\overline{\Omega}$ to a smooth and compactly supported function in $\Omega' \supset \Omega$ of $\R^d$. Hence $f_* \in C^\infty_c(\R^d) \subset \h$. 
\begin{prop}
\label{prop:bias_compact}
Consider a minimizer $f_*$ of \eqref{eq:poinca_operator}. Then
\begin{align}
\Poinca^{-1} \leqslant \Poinca_{\lambda}^{-1} \leqslant \Poinca^{-1} + \lambda \frac{\|f_*\|_\h}{\ \ \|f_*\|^2_{L^2(\mu)}}.
\end{align}
\end{prop}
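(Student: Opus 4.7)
My plan is to exploit the variational characterizations directly and to use $f_*$ itself as a test function in the regularized Rayleigh ratio. The key preparatory fact, established in the paragraph preceding the proposition, is that $f_*\in\h$: by elliptic regularity applied to \eqref{eq:minimizer_compact} we have $f_*\in C^\infty(\overline\Omega)$, and the Whitney extension theorem then yields a compactly supported $C^\infty$ extension which lies in $\h = H^d(\R^d,d\lambda)$. Up to subtracting its $\mu$-mean (which does not change gradients and preserves membership in $\h$, since the RKHS contains constants) we may also assume $\int f_* d\mu = 0$, so that $\langle f_*, Cf_*\rangle_\h = \mathrm{Var}_\mu(f_*) = \|f_*\|_{L^2(\mu)}^2$.

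For the lower bound, I would simply observe that for every $f\in\h\setminus\mathrm{Ker}(\Delta)$,
\begin{equation*}
\frac{\langle f,(\Delta+\lambda I)f\rangle_\h}{\langle f,Cf\rangle_\h} \geqslant \frac{\langle f,\Delta f\rangle_\h}{\langle f,Cf\rangle_\h},
\end{equation*}
so taking infima over both sides and using the spectral characterization of Proposition~\ref{prop:Spectral_characterization} yields $\Poinca_\la^{-1}\geqslant\Poinca^{-1}$.

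For the upper bound, the plan is to plug $f_*$ into the regularized ratio. Using the reproducing and partial-derivative-reproducing properties (assumptions \asm{asm:regularity_RKHS} and \asm{asm:bounded_kernel}), we have $\langle f_*,\Delta f_*\rangle_\h = \int\|\nabla f_*\|^2 d\mu$ and $\langle f_*,Cf_*\rangle_\h = \|f_*\|_{L^2(\mu)}^2$. Since $f_*$ attains the optimum in \eqref{eq:poinca_operator}, $\int\|\nabla f_*\|^2 d\mu = \Poinca^{-1}\|f_*\|^2_{L^2(\mu)}$, and hence
\begin{equation*}
\Poinca_\la^{-1} \;\leqslant\; \frac{\langle f_*,(\Delta+\lambda I)f_*\rangle_\h}{\langle f_*,Cf_*\rangle_\h} \;=\; \frac{\Poinca^{-1}\|f_*\|^2_{L^2(\mu)} + \lambda\|f_*\|^2_\h}{\|f_*\|^2_{L^2(\mu)}} \;=\; \Poinca^{-1} + \lambda\,\frac{\|f_*\|^2_\h}{\|f_*\|^2_{L^2(\mu)}},
\end{equation*}
which is the announced estimate.

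The only real obstacle is ensuring $f_*\in\h$ so that it is an admissible competitor in the regularized variational problem; this is exactly why the compact-support setting and the choice $\h = H^d(\R^d,d\lambda)$ were introduced, together with the regularity of solutions to the elliptic equation $Lf=\Poinca^{-1}f$ on $\overline\Omega$. Once this is granted, the proof is a one-line computation: the bound is nothing more than the value of the regularized Rayleigh ratio at the unregularized optimizer, and the quantity $\|f_*\|_\h^2/\|f_*\|^2_{L^2(\mu)}$ is precisely the price paid for the regularization term $\lambda I$.
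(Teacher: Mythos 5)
Your argument is essentially the paper's proof: since $f_*\in\h$ (by elliptic regularity and Whitney extension, as established just before the proposition), you use it as a test function in the regularized Rayleigh quotient, and the identity $\int\|\nabla f_*\|^2d\mu=\Poinca^{-1}\|f_*\|^2_{L^2(\mu)}$ gives the upper bound, while the lower bound is immediate from $\lambda\|f\|_\h^2\geqslant 0$. One justification, however, is wrong as stated: you reduce to $\int f_*\,d\mu=0$ by ``subtracting the mean, since the RKHS contains constants,'' but $\h=H^d(\R^d,d\lambda)$ does \emph{not} contain nonzero constants (they are not square-integrable for the Lebesgue measure), so $f_*-\int f_*\,d\mu$ need not belong to $\h$. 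Fortunately no such reduction is required: the infimum in \eqref{eq:poinca_operator} is taken over $H^1(\mu)\cap L^2_0(\mu)$, so the minimizer is mean-zero to begin with; equivalently, as the paper argues, the eigenvalue relation \eqref{eq:minimizer_compact} gives $\int f_*\,d\mu=\Poinca\int Lf_*\,d\mu=0$ because $\mu$ is stationary. With that repair your proof coincides with the paper's. Note finally that your bound carries $\|f_*\|_\h^2$ in the numerator, which is indeed what the computation produces; the missing square in the displayed statement is a typo in the paper, and the paper's own last line has the same inconsistency.
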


\begin{proof}
First note that $f_*$ has mean zero with respect to $d\mu$. Indeed, $\int f d\mu = \Poinca^{-1} \int L f d \mu = 0$, by the fact that $d\mu$ is the stationary distribution of the dynamics. 

For $\lambda > 0$,
\begin{align*}
\Poinca^{-1} \leqslant \Poinca_{\lambda}^{-1} &= \inf_{f \in \h \setminus \R\mathds{1}} \frac{\int_{\R^d}\|\nabla f(x)\|^2 d\mu(x) + \lambda \|f\|_\h^2}{\int_{\R^d}f(x)^2 d\mu(x) - \left(\int_{\R^d}f(x) d\mu(x)\right)^2 } \\
&\leqslant \frac{\int_{\R^d}\|\nabla f_*(x)\|^2 d\mu(x) + \lambda \|f_*\|_\h^2}{\int_{\R^d}f_*(x)^2 d\mu(x) }= \Poinca^{-1} + \lambda \frac{\|f_*\|_\h}{\ \ \|f_*\|^2_{L^2(\mu)}},
\end{align*}
which provides the result.
\end{proof}
\section{Technical inequalities}
\label{sec:technical_inequalities}

\subsection{Concentration inequalities}

We first begin by recalling some concentration inequalities for sums of random vectors and operators.

\begin{prop}[Bernstein’s inequality for sums of random vectors]
\label{prop:Bernstein_vector}
Let $z_1,\hdots, z_n$ be a sequence of independent identically and distributed random elements of a separable Hilbert space
$\h$. Assume that $\E \|z_1\|<+\infty$ and note $\mu = \E z_1$. Let $\sigma, L \geqslant 0$ such that, $$ \forall p \geqslant 2, \qquad\E \left\|z_1-\mu\right\| ^p_\h \leqslant \frac{1}{2} p!\sigma^2L^{p-2}.$$ 
Then, for any $\delta \in (0,1]$,
\begin{align}
\left\|\frac{1}{n}\sum_{i=1}^n z_i - \mu\right\|_\h\leqslant \frac{2L \log(2/\delta)}{n} + \sqrt{\frac{2 \sigma^2 \log(2/\delta)}{n}},
\end{align}
with probability at least $1-\delta$.
\end{prop}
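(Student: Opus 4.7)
The plan is to adapt the classical Bernstein-Chernoff argument to the Hilbert-space setting. Writing $Y_i := z_i - \mu$ and $S_n := \sum_{i=1}^n Y_i$, centering gives $\E Y_i = 0$ (in the Bochner sense), and the moment hypothesis persists as $\E\|Y_i\|_\h^p \leq \frac{1}{2}p!\sigma^2 L^{p-2}$ for all $p \geq 2$. The target quantity is the tail of $\|S_n\|_\h/n$.

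First, I would establish the scalar exponential moment bound that follows from any Bernstein moment condition: for $0 < \lambda < 1/L$,
$$\log \E\bigl[\mathrm{e}^{\lambda\|Y_i\|_\h}\bigr] \leq \lambda\,\E\|Y_i\|_\h + \frac{\sigma^2 \lambda^2/2}{1 - \lambda L},$$
obtained by Taylor-expanding the exponential and plugging the moment bound into each term. The sticking point is that $\|S_n\|_\h$ is not additive in the $Y_i$, so the scalar Chernoff strategy does not apply directly: the naive triangle inequality $\|S_n\|_\h \leq \sum_i \|Y_i\|_\h$ would destroy the crucial $\sqrt{n}$ improvement provided by independence.

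The second step, which is the technical heart of the argument, is to upgrade this to a bound on $\E\mathrm{e}^{\lambda\|S_n\|_\h}$ itself. I would use a Hilbert-space analogue of the Pinelis-Sakhanenko / Yurinsky argument for $2$-smooth Banach spaces (of which Hilbert spaces are the prototype, with smoothness constant $1$). Concretely, letting $M_k := \sum_{i\leq k} Y_i$ and exploiting the identity
$$\|M_k\|_\h^2 = \|M_{k-1}\|_\h^2 + 2\langle M_{k-1}, Y_k\rangle_\h + \|Y_k\|_\h^2,$$
combined with the conditional centering $\E[\langle M_{k-1}, Y_k\rangle_\h \mid Y_1,\ldots,Y_{k-1}] = 0$, one carries out a recursive moment-generating-function estimate yielding
$$\E\bigl[\mathrm{e}^{\lambda\|S_n\|_\h}\bigr] \leq 2\exp\!\left(\frac{n\sigma^2\lambda^2/2}{1 - \lambda L}\right), \qquad 0 < \lambda < 1/L.$$

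Finally, applying Markov's inequality in the form $\P(\|S_n\|_\h \geq nt) \leq \mathrm{e}^{-\lambda nt}\,\E\mathrm{e}^{\lambda\|S_n\|_\h}$ and making the optimized choice $\lambda = t/(\sigma^2 + Lt)$ (as in the scalar Bernstein derivation) gives
$$\P\!\left(\frac{\|S_n\|_\h}{n} \geq t\right) \leq 2\exp\!\left(-\frac{n t^2}{2(\sigma^2 + L t)}\right).$$
Setting the right-hand side equal to $\delta$ and solving the resulting quadratic in $t$ (using $\sqrt{a^2 + b^2} \leq a + b$ to decouple the two regimes) delivers exactly the announced estimate $\frac{2L\log(2/\delta)}{n} + \sqrt{\frac{2\sigma^2 \log(2/\delta)}{n}}$. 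The main obstacle is the Pinelis-Sakhanenko step: handling the cross-term $\langle M_{k-1}, Y_k\rangle_\h$ at the level of exponential moments requires invoking the $2$-smoothness of the Hilbert norm, and doing so under the Bernstein moment condition (rather than a uniform bound $\|Y_i\|_\h \leq L$) requires either a careful truncation at level of order $L$ or a direct moment expansion. Everything outside this step is routine scalar Chernoff manipulation.
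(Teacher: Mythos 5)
Your proposal is sound, but it is worth knowing that the paper does not prove this proposition at all: its ``proof'' is a one-line citation of Theorem 3.3.4 in Yurinsky (1995), of which the statement is a direct restatement. What you sketch is essentially the standard proof of that cited theorem — the Yurinsky/Pinelis--Sakhanenko route of controlling $\E\,\mathrm{e}^{\lambda\|S_n\|_\h}$ through the martingale decomposition $\|M_k\|_\h^2=\|M_{k-1}\|_\h^2+2\langle M_{k-1},Y_k\rangle_\h+\|Y_k\|_\h^2$ and the $2$-smoothness of the Hilbert norm, followed by a Chernoff bound and inversion. Your end-game is correct: from the tail bound $2\exp\bigl(-\tfrac{nt^2}{2(\sigma^2+Lt)}\bigr)$, taking $t=\tfrac{2L\log(2/\delta)}{n}+\sqrt{\tfrac{2\sigma^2\log(2/\delta)}{n}}$ one checks $nt^2\geqslant 2\log(2/\delta)(\sigma^2+Lt)$ (the cross term $nAB$ is what gives the slack), so the stated constants do come out. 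The only caveat is that the technical heart — the exponential-moment bound $\E\,\mathrm{e}^{\lambda\|S_n\|_\h}\leqslant 2\exp\bigl(\tfrac{n\sigma^2\lambda^2/2}{1-\lambda L}\bigr)$ under the moment (not boundedness) hypothesis — is asserted rather than carried out in your sketch; that step is precisely the content of Yurinsky's lemma, so in substance you and the paper lean on the same result, with your version buying self-containedness in outline at the cost of the one genuinely nontrivial estimate still to be written down.
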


\begin{proof}
This is a restatement of Theorem 3.3.4 of \citep{Yurinsky1995}.
\end{proof}

\begin{prop}[Bernstein’s inequality for sums of random operators]
\label{prop:Bernstein_operator}
Let $\h$ be a separable Hilbert space and let $X_1, \hdots, X_n$ be a sequence of independent and identically distributed
self-adjoint random operators on $\h$. Assume that  $\E(X_i) = 0$ and that there exist $T > 0$ and $S$ a
positive trace-class operator such that $\|X_i\| \leqslant T$ almost surely and $\E X_i^2 \preccurlyeq S$ for any $i \in \{1, \hdots, n\}$. Then, for any $\delta \in (0,1]$, the following inequality holds:
\begin{align}
\label{eq:concentration_operator}
\left\|\frac{1}{n}\sum_{i=1}^n X_i\right\| \leqslant \frac{2T\beta}{3n} + \sqrt{\frac{2 \|S\|\beta}{n}},
\end{align}
with probability at least $1-\delta$ and where $\beta = \log \frac{2 \mathrm{ Tr } S}{\|S\|\delta}$.
\end{prop}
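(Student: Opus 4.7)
The plan is to apply the non-commutative Laplace transform method of Ahlswede--Winter and Tropp, combined with Minsker's trick for replacing the ambient dimension by the intrinsic dimension $\tr(S)/\|S\|$ in the infinite-dimensional setting. Write $Y_n = \sum_{i=1}^n X_i$. For $\theta > 0$, Markov's inequality applied to the monotone functional $\tr \circ \exp(\theta \cdot)$ (restricted to the appropriate trace class so that $\tr(\exp(\theta Y_n))$ makes sense) gives
\[
\P\!\left(\lambda_{\max}(Y_n) \geq s\right) \;\leq\; e^{-\theta s}\, \E\bigl[\tr(\exp(\theta Y_n))\bigr].
\]

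Next I would control the expected trace of the matrix exponential. Lieb's concavity theorem applied to the i.i.d.\ sum (Tropp's master bound) yields $\E[\tr(\exp(\theta Y_n))] \leq \tr(\exp(n\, \log \E[\exp(\theta X_1)]))$. Since $\|X_1\| \leq T$, the scalar inequality $e^u \leq 1 + u + g(T)u^2$ on $[-T,T]$ with $g(u)=(e^u-1-u)/u^2$ lifts (by functional calculus) to $\exp(\theta X_1) \preccurlyeq I + \theta X_1 + g(\theta T)\theta^2 X_1^2$. Taking expectations, using $\E X_1 = 0$ and $\E X_1^2 \preccurlyeq S$, and then the operator inequality $I + A \preccurlyeq \exp(A)$ for $A \succcurlyeq 0$, gives
\[
\E[\exp(\theta X_1)] \;\preccurlyeq\; \exp\!\bigl(g(\theta T)\,\theta^2\, S\bigr),
\]
so that $\E[\tr(\exp(\theta Y_n))] \leq \tr\bigl(\exp(n g(\theta T)\theta^2 S)\bigr)$.

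Now comes the Minsker step that removes any dimensional dependence: for any positive trace-class operator $A$, spectral decomposition $A=\sum_k a_k P_k$ plus the monotonicity of $u\mapsto (e^u-1)/u$ on $[0,\|A\|]$ yields
\[
\tr\bigl(\exp(A)-I\bigr) \;=\; \sum_k (e^{a_k}-1) \;\leq\; \frac{e^{\|A\|}-1}{\|A\|}\sum_k a_k \;=\; \frac{\tr(A)}{\|A\|}\bigl(e^{\|A\|}-1\bigr).
\]
Applied to $A = n g(\theta T)\theta^2 S$ (which is trace class by hypothesis), and using $e^{\|A\|}-1 \leq e^{\|A\|}$, this gives
\[
\P\!\left(\lambda_{\max}(Y_n) \geq s\right) \;\leq\; \frac{\tr(S)}{\|S\|}\,\exp\!\bigl(-\theta s + n g(\theta T)\theta^2 \|S\|\bigr).
\]

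Finally I would optimise over $\theta$. Using the standard bound $g(u)\leq \tfrac{1}{2(1-u/3)}$ on $(0,3)$, setting $s=nt$, inverting the resulting quadratic in $t$, and setting the right-hand side equal to $\delta/2$ gives
\[
t \;\leq\; \frac{2T\beta}{3n} + \sqrt{\frac{2\|S\|\beta}{n}}, \qquad \beta = \log\frac{2\tr(S)}{\|S\|\delta}.
\]
The same estimate holds for $\lambda_{\max}(-Y_n)$ since the hypotheses are symmetric in $X_i\leftrightarrow -X_i$; a union bound over the two events accounts for the factor of $2$ inside the logarithm, and since $\|Y_n\| = \max(\lambda_{\max}(Y_n),\lambda_{\max}(-Y_n))$ we conclude. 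The main obstacle is the infinite-dimensional justification: Lieb's concavity, the operator exponential, and the trace are delicate when $\h$ is infinite-dimensional. The natural remedy is to approximate $S$ by its truncation $S_N = \sum_{k\leq N} \sigma_k P_k$ to its top $N$ spectral projectors, run the argument in the finite-rank range where everything is standard matrix algebra, and take $N\to\infty$ using that $S$ is trace class -- this is precisely the content of Minsker's extension and explains why $\tr(S)/\|S\|$ plays the role the ambient dimension $d$ plays in Tropp's theorem.
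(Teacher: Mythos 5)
Your overall strategy is the right one, and in fact it reconstructs precisely the argument behind the result the paper invokes: the paper's own ``proof'' of this proposition is a one-line citation of Theorem~7.3.1 of Tropp's monograph combined with the technique of Section~4 of Minsker for the separable Hilbert space case, so you are re-proving the cited statement (Laplace transform method, Lieb/Tropp master bound, Bernstein-type bound on the operator moment generating function, intrinsic-dimension trace trick, finite-rank truncation) rather than following a genuinely different route.

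There is, however, a genuine gap in the middle of your chain. You start from $\P(\lambda_{\max}(Y_n)\geqslant s)\leqslant \mathrm{e}^{-\theta s}\,\E[\tr\exp(\theta Y_n)]$ but in the last step you only control $\tr(\exp(A)-I)$ and silently discard the identity. This discard is not justified: in dimension $d$ one has $\tr\exp(A)=\tr(\exp(A)-I)+d$, and the inequality $\tr\exp(A)\leqslant (\tr A/\|A\|)\,\mathrm{e}^{\|A\|}$ that your display implicitly uses is false in general (let $\|A\|\to 0$: the left side tends to $d$ while the right side tends to $\tr A/\|A\|\leqslant d$, with strict inequality whenever the intrinsic dimension is smaller than $d$); in infinite dimension the discarded term is $+\infty$, and indeed $\exp(\theta Y_n)$ is never trace class since its essential spectrum contains $1$, so the opening Markov bound is vacuous as written. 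The whole point of the intrinsic-dimension argument is to repair exactly this step: replace $u\mapsto \mathrm{e}^{\theta u}$ in the Markov/Laplace bound by $\psi(u)=\mathrm{e}^{\theta u}-\theta u-1$, which is nonnegative everywhere and nondecreasing on $[0,\infty)$, so that $\P(\lambda_{\max}(Y_n)\geqslant s)\leqslant \E[\tr\psi(Y_n)]/\psi(s)$. Working with finite-rank compressions $P_N X_i P_N$ (so that all traces are finite and $\E(P_NX_iP_N)^2\preccurlyeq P_N S P_N$), the centering $\E Y_n=0$ gives $\E[\tr\psi(Y_n)]=\E[\tr(\mathrm{e}^{\theta Y_n}-I)]$, the master bound together with monotonicity of $B\mapsto\tr(\mathrm{e}^{B}-I)$ under $\preccurlyeq$ reduces this to $\tr(\mathrm{e}^{A}-I)$ with $A=n\,g(\theta T)\theta^2 S$, and only then does your estimate $\tr(\mathrm{e}^{A}-I)\leqslant (\tr A/\|A\|)(\mathrm{e}^{\|A\|}-1)$ apply. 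The price is that the final optimisation must be rerun with $\psi(s)^{-1}$ in place of $\mathrm{e}^{-\theta s}$; this is exactly where the constants and validity conditions in Tropp's Theorem~7.3.1 and Minsker's variant come from, and the proposition as stated is then a restatement of those results. Two smaller points on your truncation step: to pass to the limit you need $\|P_N Y_n P_N\|\to\|Y_n\|$, which requires $Y_n$ compact; this does hold here because $\E X_i^2\preccurlyeq S$ trace class gives $\E\,\tr X_i^2\leqslant \tr S<\infty$, hence $X_i$ is almost surely Hilbert--Schmidt, but it should be said. Also note that $\|P_N S P_N\|$ and $\tr(P_N S P_N)$ only converge to $\|S\|$ and $\tr S$ as $N\to\infty$, so the constants in the truncated bound must be passed to the limit as well; your two-sided union bound accounting for the factor $2$ in $\beta$ is fine.
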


\begin{proof}
The theorem is a restatement of Theorem 7.3.1 of \citep{Tropp2012UserFriendlyTF} generalized to the separable
Hilbert space case by means of the technique in Section 4 of \citep{Minsker2011}.
\end{proof}

\subsection{Operator bounds}
\label{subsec:operators}

\begin{lemma} 
Under assumptions \asm{asm:regularity_RKHS} and \asm{asm:bounded_kernel}, $\C$, $C$ and $\Delta$ are trace-class operators.
\end{lemma}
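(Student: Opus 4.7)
The strategy is standard: use that $\Sigma$ and $\Delta$ are Bochner integrals of positive (rank-one or finite-rank) operators in $\mathcal{H}$, then compute their traces by exchanging expectation with the trace functional, which is legal for positive operators by monotone convergence on any orthonormal basis. The bounds \asm{asm:bounded_kernel} then yield finite traces, and $C$ will follow by writing it as a difference of two trace-class operators.

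First I would handle $\Sigma = \mathbb{E}[K_x \otimes K_x]$. For each $x$, the operator $K_x \otimes K_x$ is positive rank-one with $\mathrm{Tr}(K_x \otimes K_x) = \|K_x\|_{\h}^2 = K(x,x)$, which is bounded by $\mathcal{K}$ under \asm{asm:bounded_kernel}. Fixing any orthonormal basis $(e_j)_{j \geq 1}$ of $\h$, monotone convergence (applied to the nonnegative integrand $\sum_j \langle e_j, (K_x \otimes K_x) e_j\rangle = K(x,x)$) gives
\begin{align*}
\mathrm{Tr}(\Sigma) = \sum_j \langle e_j, \Sigma e_j \rangle = \sum_j \mathbb{E}\langle e_j, (K_x \otimes K_x) e_j \rangle = \mathbb{E}\,K(x,x) \leq \mathcal{K}.
\end{align*}
The same argument works verbatim for $\Delta$ once one notes the identity $\nabla K_x \otimes_d \nabla K_x = \sum_{i=1}^d \partial_i K_x \otimes \partial_i K_x$ coming directly from the definition of $\otimes_d$; this is a positive finite-rank operator whose trace is $\sum_i \|\partial_i K_x\|^2 = \|\nabla K_x\|^2 \leq \mathcal{K}_d$ by \asm{asm:bounded_kernel} and the expression for $\|\nabla K_x\|^2$ given after that assumption. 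Monotone convergence then yields $\mathrm{Tr}(\Delta) \leq \mathcal{K}_d$.

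For $C = \Sigma - m \otimes m$, it suffices to check that $m \otimes m$ is trace-class. The mean element $m = \int_{\R^d} K_x\, d\mu(x)$ is well-defined in $\h$ because $\|K_x\|_\h = \sqrt{K(x,x)} \leq \sqrt{\mathcal{K}}$, so the Bochner integral converges and $\|m\|_\h \leq \sqrt{\mathcal{K}}$. Hence $m \otimes m$ is rank-one with trace $\|m\|_\h^2 \leq \mathcal{K}$, and as the space of trace-class operators is a vector space, $C$ is trace-class.

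The only subtle step is the exchange of expectation and trace; it is routine here because all integrands are pointwise nonnegative, so no dominated-convergence hypothesis is needed. No computation is heavier than the two bounds already supplied by \asm{asm:bounded_kernel}, so I would not expect any serious obstacle.
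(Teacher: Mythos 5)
Your proposal is correct and follows essentially the same route as the paper: fix an orthonormal basis, use nonnegativity to exchange the trace sum with the expectation, and bound the resulting integrand by $\mathcal{K}$ (resp.\ $\mathcal{K}_d$) via \asm{asm:bounded_kernel}. The only cosmetic difference is that you handle $C$ by splitting off the rank-one operator $m \otimes m$, where the paper simply notes the argument for $\Sigma$ and $C$ is analogous; both are fine.
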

\begin{proof}
We only prove the result for $\Delta$, the proof for $\C$ and $C$ being similar. Consider an orthonormal basis $(\phi_i)_{i\in \mathbb{N}}$ of $\h$. Then, as $\Delta$ is a positive self adjoint operator,
\begin{align*}
\tr\ \Delta &= \sum_{i = 1}^\infty \langle \Delta \phi_i,\phi_i \rangle = \sum_{i = 1}^\infty \E_{\mu} \left[\sum_{j=1}^d\langle \partial_j K_x,\phi_i \rangle^2\right] = \E_{\mu} \left[\sum_{i = 1}^\infty \sum_{j=1}^d\langle \partial_j K_x,\phi_i \rangle^2\right] \\
&= \E_{\mu} \left[\sum_{j=1}^d\left\|\partial_j K_x\right\|^2\right] \leqslant \mathcal{K}_d.
\end{align*}
Hence, $\Delta$ is a trace-class operator.
\end{proof}

The following quantities are useful for the estimates in this section:
\begin{align*}
\Nla = \sup_{x \in \mathrm{supp} (\mu) } \left\| \D_{\lambda}^{-1/2} K_x \right\|^2_\h, \ \textrm{and} \quad \Fla = \sup_{x \in \mathrm{supp} (\mu) } \left\| \D_{\lambda}^{-1/2} \nabla K_x \right\|^2_\h.
\end{align*}
Note that under assumption \asm{asm:bounded_kernel}, $\Nla \leqslant \frac{\mathcal{K}}{\lambda}$ and $\Fla \leqslant \frac{\mathcal{K}_d}{\lambda}$. Note also that under refined assumptions on the spectrum of $\Delta$, we could have a better dependence of the latter bounds with respect to $\lambda$. Let us now state three useful lemmas to bound the norms of the operators that appear during the proof of Proposition \ref{prop:hat_P_to_P_lambda}.

\begin{lemma}
\label{lemma:concentration_C}
For any $\lambda > 0$ and any $\delta \in (0,1]$,
\begin{align*}
\left\|\D_\lambda^{-1/2}  (\widehat{C}-C) \D_\lambda^{-1/2}\right\|&\leqslant \frac{4 \Nla \log \frac{2\,\mathrm{Tr } \C }{\Poinca^\lambda_{\mu} \lambda \delta } }{3n} + \left[\frac{ 2\ \Poinca^\lambda_{\mu}\ \Nla \log \frac{2\,\mathrm{Tr } \C }{\Poinca^\lambda_{\mu} \lambda \delta }}{n}\right]^{1/2} \\ 
&\hspace*{0.5cm}+ 8 \Nla \left( \frac{\log(\frac{2}{\delta})}{n} + \sqrt{\frac{\log(\frac{2}{\delta})}{n}} \right) \\
&\hspace*{0.5cm}+ 16 \Nla \left( \frac{\log(\frac{2}{\delta})}{n} + \sqrt{\frac{\log(\frac{2}{\delta})}{n}} \right)^2,
\end{align*}
with probability at least $1-\delta$.
\end{lemma}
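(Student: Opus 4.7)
The plan is to decompose $\widehat{C}-C$ into a ``second moment'' fluctuation $\widehat{\Sigma}-\Sigma$ and a ``mean'' fluctuation, and apply the operator and vector Bernstein inequalities (Propositions~\ref{prop:Bernstein_operator} and~\ref{prop:Bernstein_vector}) to each piece. Starting from $C=\Sigma-m\otimes m$, $\widehat{C}=\widehat{\Sigma}-\widehat{m}\otimes\widehat{m}$, and using the algebraic identity
$\widehat{m}\otimes\widehat{m}-m\otimes m = (\widehat{m}-m)\otimes m + m\otimes(\widehat{m}-m) + (\widehat{m}-m)\otimes(\widehat{m}-m)$,
the triangle inequality yields
\begin{align*}
\left\|\D_\lambda^{-1/2}(\widehat{C}-C)\D_\lambda^{-1/2}\right\| &\leqslant \left\|\D_\lambda^{-1/2}(\widehat{\Sigma}-\Sigma)\D_\lambda^{-1/2}\right\| + 2\,\|\D_\lambda^{-1/2}m\|\,\|\D_\lambda^{-1/2}(\widehat{m}-m)\| \\
&\quad + \|\D_\lambda^{-1/2}(\widehat{m}-m)\|^2,
\end{align*}
matching the four summands of the claim term by term.

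The operator piece is handled by applying Proposition~\ref{prop:Bernstein_operator} to the centered, i.i.d., self-adjoint operators $X_i:=\D_\lambda^{-1/2}(K_{x_i}\otimes K_{x_i}-\Sigma)\D_\lambda^{-1/2}$. The almost-sure bound $\|X_i\|\leqslant 2\Nla$ follows directly from $\|\D_\lambda^{-1/2}K_x\|^2\leqslant\Nla$ and reproduces, via the $\frac{2T\beta}{3n}$ part of Bernstein, the first summand $\frac{4\Nla}{3n}\log(\cdots)$. The variance is controlled using the rank-one identity $(u\otimes u)^2=\|u\|^2\,(u\otimes u)$, which gives $\E X_i^2\preccurlyeq \Nla\,\D_\lambda^{-1/2}\Sigma\D_\lambda^{-1/2}$. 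Taking $S=\Nla\,\D_\lambda^{-1/2}\Sigma\D_\lambda^{-1/2}$, and observing that on $\h_0=(\mathrm{Ker}\,\Delta)^\perp$ the operator $\Sigma$ coincides with $C$ (since $m\otimes m$ vanishes there) while $\tr(\D_\lambda^{-1}\C)\leqslant\tr\C/\lambda$, one reads off $\|S\|\leqslant\Nla\,\Poinca^\lambda_\mu$ and $\tr S/\|S\|\leqslant \tr\C/(\Poinca^\lambda_\mu\lambda)$, which produces the $\sqrt{2\,\Poinca^\lambda_\mu\,\Nla\log(\cdots)/n}$ summand and its logarithm.

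For the two mean pieces I would apply Proposition~\ref{prop:Bernstein_vector} to the i.i.d. $\h$-valued vectors $\D_\lambda^{-1/2}K_{x_i}$, which are uniformly bounded in norm by $\sqrt{\Nla}$ under \asm{asm:bounded_kernel}. Standard Bernstein constants ($L=2\sqrt{\Nla}$, $\sigma^2=4\Nla$) yield $\|\D_\lambda^{-1/2}(\widehat{m}-m)\|\leqslant 4\sqrt{\Nla}\bigl(\log(2/\delta)/n+\sqrt{\log(2/\delta)/n}\bigr)$ with probability at least $1-\delta/2$. Combined with the Jensen bound $\|\D_\lambda^{-1/2}m\|^2=\|\int\D_\lambda^{-1/2}K_x\,d\mu\|^2\leqslant\int\|\D_\lambda^{-1/2}K_x\|^2\,d\mu\leqslant\Nla$, the cross term produces the $8\Nla(\cdots)$ summand and the square produces the $16\Nla(\cdots)^2$ summand. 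A union bound over the two Bernstein events closes the estimate at total confidence $1-\delta$ (with logarithmic constants absorbed in the way stated).

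The main obstacle is getting the variance in the operator Bernstein step sharp enough: a naive bound $\E X_i^2 \preccurlyeq\Nla^2\,I$ would make the dominant contribution scale like $\Nla/\sqrt{n}\sim 1/(\lambda\sqrt{n})$ and lose the crucial factor $\sqrt{\Poinca^\lambda_\mu}$, degrading the statistical rate that Theorem~\ref{thm:statistical_consistency} relies on. The rank-one trick $(u\otimes u)^2=\|u\|^2(u\otimes u)$ is exactly what allows to pull out one factor of $\Nla$ while leaving $\D_\lambda^{-1/2}\Sigma\D_\lambda^{-1/2}$ (whose norm is $\Poinca^\lambda_\mu$ on $\h_0$) behind; the rest is routine bookkeeping of Bernstein constants, the Jensen bound on $m$, and the union bound.
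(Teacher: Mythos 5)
Your argument is essentially the paper's own proof: the same decomposition of $\widehat{C}-C$ into the second-moment fluctuation $\widehat{\C}-\C$ plus the rank-one mean terms $2\|\D_\lambda^{-1/2}m\|\,\|\D_\lambda^{-1/2}(\widehat{m}-m)\|+\|\D_\lambda^{-1/2}(\widehat{m}-m)\|^2$, operator Bernstein (Proposition~\ref{prop:Bernstein_operator}) applied to $\D_\lambda^{-1/2}\big(K_{x_i}\otimes K_{x_i}-\C\big)\D_\lambda^{-1/2}$ with the rank-one identity for the variance proxy, vector Bernstein (Proposition~\ref{prop:Bernstein_vector}) for $\D_\lambda^{-1/2}(\widehat{m}-m)$ with $L=2\sqrt{\Nla}$, and $\|\D_\lambda^{-1/2}m\|\leqslant\sqrt{\Nla}$. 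Your constants match (your $\delta/2$ union bound would technically turn the $\log(2/\delta)$ factors into $\log(4/\delta)$, a bookkeeping point on which the paper is itself loose, since it invokes both concentration events at level $\delta$).

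The one step to push back on is your justification of $\|S\|\leqslant\Nla\,\Poinca^\lambda_\mu$ by arguing that $\C$ coincides with $C$ on $\h_0$. Proposition~\ref{prop:Bernstein_operator} requires $\E X_i^2\preccurlyeq S$ and controls $\big\|\frac{1}{n}\sum_i X_i\big\|$ as operators on all of $\h$; the $X_i$ do not leave $\h_0$ invariant, so you cannot evaluate the norm of $S$ on $\h_0$ only. On $\h$ one has $\|\D_\lambda^{-1/2}\C\D_\lambda^{-1/2}\|\geqslant\Poinca^\lambda_\mu$ (because $\C=C+m\otimes m\succcurlyeq C$), and this quantity can in fact be of order $\Nla$ rather than $\Poinca^\lambda_\mu$ for small $\lambda$: a function that is nearly constant on the support of $\mu$, with small Dirichlet energy and bounded RKHS norm, makes the uncentered Rayleigh quotient blow up like $1/\lambda$ while contributing nothing to the centered one. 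The inequality $\|\D_\lambda^{-1/2}\C\D_\lambda^{-1/2}\|\geqslant\Poinca^\lambda_\mu$ is what is needed, and is legitimate, only for bounding $\beta$ inside the logarithm. The paper's proof performs exactly the same silent substitution of $\Poinca^\lambda_\mu$ for $\|\D_\lambda^{-1/2}\C\D_\lambda^{-1/2}\|$ in the square-root (variance) term, so your write-up is faithful to the published argument; but the restriction-to-$\h_0$ reasoning should not be presented as a proof of that substitution --- a fully rigorous version of the bound would carry $\|\D_\lambda^{-1/2}\C\D_\lambda^{-1/2}\|$ (which is at most $\Nla$) in place of $\Poinca^\lambda_\mu$ there.
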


\begin{proof}[Proof of Lemma \ref{lemma:concentration_C}]
We apply some concentration inequality to the operator $\D_\lambda^{-1/2} \widehat{C}\D_\lambda^{-1/2}$ whose mean is exactly $\D_\lambda^{-1/2} C\D_\lambda^{-1/2}$. The calculation is the following:
\begin{align*}
\left\|\D_\lambda^{-1/2}  (\widehat{C}-C) \D_\lambda^{-1/2}\right\| &= \left\|\D_\lambda^{-1/2} \widehat{C} \D_\lambda^{-1/2} - \D_\lambda^{-1/2} C \D_\lambda^{-1/2} \right\| \\ 
&\leqslant \left\|\D_\lambda^{-1/2} \widehat{\C} \D_\lambda^{-1/2} - \D_\lambda^{-1/2} \C \D_\lambda^{-1/2} \right\| \\ 
& \hspace*{1cm}+ \left\|\D_\lambda^{-1/2} (\widehat{m}\otimes\widehat{m}) \D_\lambda^{-1/2} - \D_\lambda^{-1/2} (m\otimes m ) \D_\lambda^{-1/2} \right\| \\ 
&= \left\| \frac{1}{n}\sum_{i=1}^n \left[ (\D_\lambda^{-1/2} K_{x_i})\otimes (\D_\lambda^{-1/2} K_{x_i}) - \D_\lambda^{-1/2} \C \D_\lambda^{-1/2} \right] \right\| \\ 
& \hspace*{1cm}+ \left\| (\D_\lambda^{-1/2}\widehat{m})\otimes(\D_\lambda^{-1/2}\widehat{m})  -  (\D_\lambda^{-1/2}m)\otimes (\D_\lambda^{-1/2}m )  \right\|.
\end{align*}
We estimate the two terms separately.

\noindent {\bfseries Bound on the first term:} we use Proposition \ref{prop:Bernstein_operator}. To do this, we bound for $i \in \llbracket 1,n\rrbracket$ :
\begin{align*}
\left\|(\D_\lambda^{-1/2} K_{x_i})\otimes (\D_\lambda^{-1/2} K_{x_i}) - \D_\lambda^{-1/2} \C \D_\lambda^{-1/2}\right\| &\leqslant \left\| \D_{\lambda}^{-1/2} K_{x_i} \right\|^2_\h + \left\|\D_\lambda^{-1/2} \C \D_\lambda^{-1/2}\right\|\\
&\leqslant 2\Nla,
\end{align*}
and, for the second order moment,
\begin{align*}
\E &\left((\D_\lambda^{-1/2} K_{x_i})\otimes (\D_\lambda^{-1/2} K_{x_i}) - \D_\lambda^{-1/2} \C \D_\lambda^{-1/2}\right)^2 \\
&\hspace*{1cm}= \E \left[\left\| \D_{\lambda}^{-1/2} K_{x_i} \right\|^2_\h  (\D_\lambda^{-1/2} K_{x_i})\otimes (\D_\lambda^{-1/2} K_{x_i})\right] - \D_\lambda^{-1/2} \C \D_\lambda^{-1} \C \D_\lambda^{-1/2} \\
&\hspace*{1cm}\preccurlyeq \Nla \D_\lambda^{-1/2} \C \D_\lambda^{-1/2}.
\end{align*}
We conclude this first part of the proof by some estimation of the constant $\beta = \log \frac{2\,\mathrm{Tr } (\C \D_\lambda^{-1})}{\left\|\D_\lambda^{-1/2} \C \D_\lambda^{-1/2}\right\| \delta }$. Using $\mathrm{Tr } \C \D_\lambda^{-1} \leqslant \lambda^{-1} \mathrm{Tr } \C $, it holds $\beta \leqslant \log \frac{2\,\mathrm{Tr } \C }{\Poinca^\lambda_\mu \lambda \delta }$. Therefore,
\begin{align*}
&\left\| \frac{1}{n}\sum_{i=1}^n \left[ (\D_\lambda^{-1/2} K_{x_i})\otimes (\D_\lambda^{-1/2} K_{x_i}) - \D_\lambda^{-1/2} \C \D_\lambda^{-1/2} \right] \right\| \\
&\hspace*{3cm}\leqslant \frac{4 \Nla \log \frac{2\,\mathrm{Tr } \C }{\Poinca^\lambda_\mu \lambda \delta } }{3n} + \left[\frac{ 2\ {\Poinca^\lambda_\mu}\ \Nla \log \frac{2\,\mathrm{Tr } \C }{\Poinca^\lambda_\mu \lambda \delta }}{n}\right]^{1/2}.
\end{align*}
\noindent {\bfseries Bound on the second term.} Denote by $v = \D_\lambda^{-1/2} m$ and $\widehat{v} = \D_\lambda^{-1/2} \widehat{m}$. A simple calculation leads to
\begin{align*}
\|\widehat{v} \otimes \widehat{v} - v \otimes v\| &\leqslant \|v \otimes (\widehat{v} - v)\| + \| (\widehat{v} - v)\otimes v\| + \| (\widehat{v} - v)\otimes (\widehat{v} - v)\| \\
 &\leqslant 2 \|v\| \|\widehat{v} - v\| + \|\widehat{v} - v\|^2.
\end{align*}
We bound $\|\widehat{v} - v\|$ with Proposition \ref{prop:Bernstein_vector}. It holds: $\widehat{v} - v = \Delta_{\lambda}^{-1/2} (\widehat{m}-m) = \frac{1}{n}\sum_{i=1}^n\Delta_{\lambda}^{-1/2} (K_{x_i}-m) = \frac{1}{n}\sum_{i=1}^nZ_i$, with $Z_i = \Delta_{\lambda}^{-1/2} (K_{x_i}-m)$. Obviously for any $i \in \llbracket1, n \rrbracket$, $\E (Z_i) = 0$, and $\|Z_i\| \leqslant \|\Delta_\lambda^{-1/2} K_{x_i}\| + \|\Delta_\lambda^{-1/2} m\| \leqslant 2 \sqrt{\Nla}$. Furthermore, 
\begin{align*}
\E \|Z_i\|^2 = \E \left\langle \Delta_{\lambda}^{-1/2} (K_{x_i}-m), \Delta_{\lambda}^{-1/2} (K_{x_i}-m) \right\rangle &= \E \left\|\Delta_{\lambda}^{-1/2} K_{x_i}\right\|^2 - \left\|\Delta_{\lambda}^{-1/2} m\right\|^2 \\ 
&\leqslant \Nla.
\end{align*} 
Thus, for $p \geqslant 2$,
$$ \E \|Z_i\|^p \leqslant  \E \left(\|Z_i\|^{p-2} \|Z_i\|^2\right) \leqslant \frac{1}{2} p! \left( \sqrt{\Nla} \right)^{2} \left(2 \sqrt{\Nla} \right)^{p-2},  $$
hence, by applying Proposition \ref{prop:Bernstein_vector} with $L = 2\sqrt{\Nla}$ and $\sigma = \sqrt{\Nla} $,
\begin{align*}
\|\widehat{v} - v\| &\leqslant \frac{4 \sqrt{\Nla}\log (2/\delta)}{n} + \sqrt{\frac{2\Nla \log(2/\delta)}{n}} \\ 
&\leqslant  4 \sqrt{\Nla} \left( \frac{\log(2/\delta)}{n} + \sqrt{\frac{\log(2/\delta)}{n}} \right).
\end{align*}
Finally, as $\|v\| \leqslant \sqrt{\Nla}$,
\begin{align*}
\|\widehat{v} \otimes \widehat{v} - v \otimes v\| &\leqslant 8 \Nla \left( \frac{\log(2/\delta)}{n} + \sqrt{\frac{\log(2/\delta)}{n}} \right)\\ 
&\hspace*{1cm}+ 16 \Nla \left( \frac{\log(2/\delta)}{n} + \sqrt{\frac{\log(2/\delta)}{n}} \right)^2. 
\end{align*}
This concludes the proof of Lemma~\ref{lemma:concentration_C}.
\end{proof}

\begin{lemma}
\label{lemma:concentration_delta}
For any $\lambda \in \left(0,\|\Delta\|\,\right] $ and any $\delta \in (0,1]$,
\begin{align*}
\left\| \D_\lambda^{-1/2}  (\widehat{\D}-\D) \D_\lambda^{-1/2} \right\| \leqslant \frac{4 \Fla \log \frac{4\,\mathrm{Tr } \D }{\lambda \delta }}{3n} + \sqrt{\frac{ 2\ \Fla \log \frac{4\,\mathrm{Tr } \D }{\lambda \delta }}{n}},
\end{align*}
with probability at least $1-\delta$.
\end{lemma}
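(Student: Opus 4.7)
The plan is to mimic the first half of the proof of Lemma~\ref{lemma:concentration_C}, applying the operator Bernstein inequality (Proposition~\ref{prop:Bernstein_operator}) to a suitable zero-mean sum. Since $\widehat{\D}$ is already a sum of independent rank-$d$ operators and has expectation $\D$, the target quantity can be written directly as
\eqals{
\D_\lambda^{-1/2}(\widehat{\D}-\D)\D_\lambda^{-1/2} = \frac{1}{n}\sum_{i=1}^n X_i, \qquad X_i := \D_\lambda^{-1/2}\bigl(\nabla K_{x_i} \otimes_d \nabla K_{x_i}\bigr)\D_\lambda^{-1/2} - \D_\lambda^{-1/2}\D\,\D_\lambda^{-1/2},
}
with the $X_i$ i.i.d., self-adjoint, and centered. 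There is no mean-type correction here (unlike for $\widehat{C}$), which is why the resulting bound has only one term rather than several.

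Next I would verify the two ingredients that Proposition~\ref{prop:Bernstein_operator} requires. For the almost-sure bound, $\|\D_\lambda^{-1/2}(\nabla K_x \otimes_d \nabla K_x)\D_\lambda^{-1/2}\| = \|\D_\lambda^{-1/2}\nabla K_x\|^2 \leqslant \Fla$ by definition of $\Fla$, and the same bound holds in expectation, giving $\|X_i\| \leqslant 2\Fla$, so I can take $T = 2\Fla$. For the variance, using that $A^2 \preccurlyeq \|A\| A$ when $A \succcurlyeq 0$,
\eqals{
\E X_i^2 \preccurlyeq \E\bigl[\D_\lambda^{-1/2}(\nabla K_x \otimes_d \nabla K_x)\D_\lambda^{-1/2}\bigr]^2 \preccurlyeq \Fla \cdot \D_\lambda^{-1/2}\D\,\D_\lambda^{-1/2},
}
so I can take $S = \Fla \cdot \D_\lambda^{-1/2}\D\,\D_\lambda^{-1/2}$. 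Since $\D$ is trace-class and $\lambda \leqslant \|\D\|$, we get $\|S\| = \Fla \cdot \|\D\|/(\|\D\|+\lambda) \geqslant \Fla/2$, and $\tr S = \Fla \cdot \tr(\D_\lambda^{-1}\D) \leqslant \Fla \cdot \tr(\D)/\lambda$.

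The log-factor in Bernstein is then bounded as
\eqals{
\beta = \log\frac{2\,\tr S}{\|S\|\,\delta} \leqslant \log\frac{4\,\tr \D}{\lambda\delta},
}
and substituting $T$, $\|S\|$, $\beta$ into \eqref{eq:concentration_operator} gives exactly the stated inequality
\eqals{
\left\|\D_\lambda^{-1/2}(\widehat{\D}-\D)\D_\lambda^{-1/2}\right\| \leqslant \frac{4\Fla \log(4\tr\D/(\lambda\delta))}{3n} + \sqrt{\frac{2\Fla \log(4\tr\D/(\lambda\delta))}{n}},
}
with probability at least $1-\delta$.

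The only mildly delicate point is the lower bound $\|S\| \geqslant \Fla/2$, which is what forces the assumption $\lambda \leqslant \|\D\|$; everything else is a routine application of the operator Bernstein inequality, and there is no analogue of the ``second term'' that complicates the proof of Lemma~\ref{lemma:concentration_C} because $\widehat{\D}$ involves no empirical-mean subtraction.
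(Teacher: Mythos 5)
Your proposal is correct and follows essentially the same route as the paper: write $\D_\lambda^{-1/2}(\widehat{\D}-\D)\D_\lambda^{-1/2}$ as a centered i.i.d.\ sum, apply the operator Bernstein inequality with $T=2\Fla$ and $S=\Fla\,\D_\lambda^{-1/2}\D\,\D_\lambda^{-1/2}$, and control $\beta$ via $\tr(\D\D_\lambda^{-1})\leqslant \tr(\D)/\lambda$ together with $\|\D_\lambda^{-1}\D\|\geqslant 1/2$ for $\lambda\leqslant\|\D\|$. The paper's proof of Lemma~\ref{lemma:concentration_delta} does exactly this, with the same constants and the same use of the condition $\lambda\leqslant\|\D\|$.
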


\begin{proof}[Proof of Lemma \ref{lemma:concentration_delta}]
 
As in the proof of Lemma \ref{lemma:concentration_C}, we want to apply some concentration inequality to the operator $\D_\lambda^{-1/2} \widehat{\Delta}\D_\lambda^{-1/2}$, whose mean is exactly $\D_\lambda^{-1/2} \Delta\D_\lambda^{-1/2}$. The proof is almost the same as Lemma~\ref{lemma:concentration_C}. We start by writing
\begin{align*}
\left\|\D_\lambda^{-1/2}  (\widehat{\D}-\D) \D_\lambda^{-1/2}\right\| &= \left\|\D_\lambda^{-1/2} \widehat{\D} \D_\lambda^{-1/2} - \D_\lambda^{-1/2} \D \D_\lambda^{-1/2} \right\| \\ 
&= \left\| \frac{1}{n}\sum_{i=1}^n \left[ (\D_\lambda^{-1/2} \nabla K_{x_i})\otimes (\D_\lambda^{-1/2}\nabla K_{x_i}) - \D_\lambda^{-1/2} \Delta \D_\lambda^{-1/2} \right] \right\|. 
\end{align*}
In order to use Proposition \ref{prop:Bernstein_operator}, we bound for $i \in \llbracket 1,n\rrbracket$,
\begin{align*}
\left\|(\D_\lambda^{-1/2} \nabla K_{x_i})\otimes (\D_\lambda^{-1/2} \nabla K_{x_i}) - \D_\lambda^{-1/2} \D \D_\lambda^{-1/2}\right\| &\leqslant \left\| \D_{\lambda}^{-1/2}\nabla K_{x_i} \right\|^2_\h + \left\|\D_\lambda^{-1/2} \D \D_\lambda^{-1/2}\right\| \\
 &\leqslant 2\Fla,
\end{align*}
and, for the second order moment,
\begin{align*}
&\E \left[\left((\D_\lambda^{-1/2} \nabla K_{x_i})\otimes (\D_\lambda^{-1/2} \nabla K_{x_i}) - \D_\lambda^{-1/2} \D \D_\lambda^{-1/2}\right)^2\right] \\
&= \E \left[\left\| \D_{\lambda}^{-1/2} \nabla K_{x_i} \right\|^2_\h  (\D_\lambda^{-1/2} \nabla K_{x_i})\otimes (\D_\lambda^{-1/2} \nabla K_{x_i})\right] - \D_\lambda^{-1/2} \D \D_\lambda^{-1} \D \D_\lambda^{-1/2} \\
& \preccurlyeq \Fla \D_\lambda^{-1/2} \D \D_\lambda^{-1/2}.
\end{align*}
We conclude by some estimation of $\beta = \log \frac{2\,\mathrm{Tr } (\D \D_\lambda^{-1})}{\left\|\D_\lambda^{-1} \D \right\| \delta }$. Since $\mathrm{Tr } (\D \D_\lambda^{-1}) \leqslant \lambda^{-1} \mathrm{Tr } \D $ and for $\lambda \leqslant \|\D\|$, $\left\|\D_\lambda^{-1} \D \right\| \geqslant 1/2$, it follow that $\beta \leqslant \log \frac{4\,\mathrm{Tr } \D }{\lambda \delta }$. The conclusion then follows from \eqref{eq:concentration_operator}.
\end{proof}

\begin{lemma}[Bounding operators]
\label{lemma:magic_2}
For any $\lambda > 0$, $\delta \in (0,1)$, and $n \geqslant 15 \Fla \log \frac{4\,\mathrm{Tr } \D }{\lambda \delta }$,
\begin{align*}
\left\|\widehat{\D}_\lambda^{-1/2} \D_\lambda^{1/2} \right\|^2 \leqslant 2,
\end{align*}
with probability at least $1-\delta$.
\end{lemma}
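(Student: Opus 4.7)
The plan is to reduce the claim to a statement about the relative error $\|\D_\lambda^{-1/2}(\widehat{\D}-\D)\D_\lambda^{-1/2}\|$, for which Lemma~\ref{lemma:concentration_delta} already provides a high-probability bound. The key algebraic observation is that
\begin{align*}
\D_\lambda^{-1/2}\widehat{\D}_\lambda \D_\lambda^{-1/2} \;=\; \D_\lambda^{-1/2}(\D+\lambda I)\D_\lambda^{-1/2} + \D_\lambda^{-1/2}(\widehat{\D}-\D)\D_\lambda^{-1/2} \;=\; I + E,
\end{align*}
where $E:=\D_\lambda^{-1/2}(\widehat{\D}-\D)\D_\lambda^{-1/2}$ is a self-adjoint operator. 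If $\|E\|\leqslant 1/2$, then $I+E\succcurlyeq (1/2)I$, and hence $\D_\lambda^{-1/2}\widehat{\D}_\lambda \D_\lambda^{-1/2}$ is invertible with $\|(I+E)^{-1}\|\leqslant 2$. Inverting the identity yields $\D_\lambda^{1/2}\widehat{\D}_\lambda^{-1}\D_\lambda^{1/2}=(I+E)^{-1}$, and since $\|\widehat{\D}_\lambda^{-1/2}\D_\lambda^{1/2}\|^2 = \|\D_\lambda^{1/2}\widehat{\D}_\lambda^{-1}\D_\lambda^{1/2}\|$ by the identity $\|T^*T\|=\|TT^*\|$, we would obtain the desired bound by $2$.

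So everything reduces to checking that, under the hypothesis $n\geqslant 15\,\Fla\log\frac{4\,\tr\D}{\lambda\delta}$, Lemma~\ref{lemma:concentration_delta} yields $\|E\|\leqslant 1/2$ with probability at least $1-\delta$. Set $\alpha := \Fla\log\frac{4\,\tr\D}{\lambda\delta}$. The lemma gives, with probability at least $1-\delta$,
\begin{align*}
\|E\| \;\leqslant\; \frac{4\alpha}{3n} + \sqrt{\frac{2\alpha}{n}}.
\end{align*}
Using $n\geqslant 15\alpha$, the first term is at most $4/45 < 1/10$ and the second is at most $\sqrt{2/15} < 2/5$, so their sum is strictly less than $1/2$. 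This is the routine but essential numerical verification that justifies the specific constant~$15$.

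The only subtle point is ensuring that the hypothesis $\lambda\leqslant\|\D\|$ (needed to invoke Lemma~\ref{lemma:concentration_delta} as stated) is compatible with the statement. The present lemma imposes no such restriction on $\lambda$, but if $\lambda>\|\D\|$ the bound is even easier: indeed $\D_\lambda\preccurlyeq 2\lambda I$ and $\widehat{\D}_\lambda \succcurlyeq \lambda I$, so $\|\widehat{\D}_\lambda^{-1/2}\D_\lambda^{1/2}\|^2\leqslant 2$ holds deterministically. Thus the two cases $\lambda\leqslant \|\D\|$ and $\lambda>\|\D\|$ can be combined to cover all $\lambda>0$. I expect no substantive difficulty beyond this case split and the algebraic inversion step; the main content of the proof is really packaged inside the concentration inequality of Lemma~\ref{lemma:concentration_delta}.
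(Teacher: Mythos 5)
Your proof is correct and follows essentially the same route as the paper: both reduce the claim to the concentration bound on $\left\|\D_\lambda^{-1/2}(\widehat{\D}-\D)\D_\lambda^{-1/2}\right\|$ from Lemma~\ref{lemma:concentration_delta}, and then verify numerically that $n \geqslant 15\, \Fla \log \frac{4\,\mathrm{Tr } \D }{\lambda \delta }$ forces this quantity below $1/2$, which gives the factor $2$. The only differences are minor: you prove the required operator inequality directly through the decomposition $\D_\lambda^{-1/2}\widehat{\D}_\lambda\D_\lambda^{-1/2} = I + E$ and inversion, where the paper instead invokes the auxiliary Lemma~\ref{lemma:auxi_lemma} quoted from the literature, and you additionally handle the case $\lambda > \|\Delta\|$ deterministically, a case the paper's proof leaves implicit since Lemma~\ref{lemma:concentration_delta} is stated only for $\lambda \leqslant \|\Delta\|$.
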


The proof of this result relies on the following lemma (see proof in \citep[Proposition 8]{rudi2017generalization}).

\begin{lemma}
\label{lemma:auxi_lemma}
Let $\h$ be a separable Hilbert space, $A$ and $B$ two bounded self-adjoint positive linear operators on $\h$ and $\lambda > 0$. Then
\begin{align*}
\left\|(A+\lambda I )^{-1/2} (B+\lambda I)^{1/2}\right\|\leqslant (1-\beta)^{-1/2},
\end{align*}
with $\beta = \lambda_{\rm{max}}\left((B+\lambda I)^{-1/2} (B-A) (B+\lambda I)^{-1/2}\right) < 1$, where $\lambda_{\rm{max}}(O)$ is the largest eigenvalue of the self-adjoint operator $O$.
\end{lemma}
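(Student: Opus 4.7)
My plan is to combine the auxiliary Lemma \ref{lemma:auxi_lemma} with the concentration bound of Lemma \ref{lemma:concentration_delta}, whose estimate we have already established.

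First, I would apply Lemma \ref{lemma:auxi_lemma} with $A = \widehat{\D}$ and $B = \D$ (both are bounded self-adjoint positive operators on $\h$ by the discussion in Section \ref{subsec:operators}). This yields
\begin{align*}
\left\|\widehat{\D}_\lambda^{-1/2} \D_\lambda^{1/2}\right\| \leqslant (1-\beta)^{-1/2},
\end{align*}
where $\beta = \lambda_{\max}\bigl(\D_\lambda^{-1/2}(\D - \widehat{\D})\D_\lambda^{-1/2}\bigr)$. To obtain the stated bound $\|\widehat{\D}_\lambda^{-1/2} \D_\lambda^{1/2}\|^2 \leqslant 2$, it therefore suffices to show that $\beta \leqslant 1/2$ on an event of probability at least $1-\delta$.

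Second, since $\D_\lambda^{-1/2}(\D - \widehat{\D})\D_\lambda^{-1/2}$ is self-adjoint, its largest eigenvalue is bounded by its operator norm, so
\begin{align*}
\beta \leqslant \left\|\D_\lambda^{-1/2}(\widehat{\D} - \D)\D_\lambda^{-1/2}\right\|.
\end{align*}
Now I would invoke Lemma \ref{lemma:concentration_delta}: for $\lambda \in (0, \|\D\|]$, with probability at least $1-\delta$,
\begin{align*}
\left\|\D_\lambda^{-1/2}(\widehat{\D} - \D)\D_\lambda^{-1/2}\right\| \leqslant \frac{4 \Fla \log(4\tr\D/(\lambda \delta))}{3n} + \sqrt{\frac{2 \Fla \log(4\tr\D/(\lambda \delta))}{n}}.
\end{align*}

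Third, I would substitute the hypothesis $n \geqslant 15\, \Fla \log(4 \tr\D/(\lambda\delta))$, which gives $\Fla \log(4\tr\D/(\lambda\delta))/n \leqslant 1/15$. Then the right-hand side above is bounded by
\begin{align*}
\frac{4}{3 \cdot 15} + \sqrt{\frac{2}{15}} \leqslant \frac{4}{45} + \frac{1}{\sqrt{7.5}} \leqslant \frac{1}{2},
\end{align*}
so $\beta \leqslant 1/2$ on this event, and in particular $\beta < 1$ so that Lemma \ref{lemma:auxi_lemma} applies. This yields $(1-\beta)^{-1/2} \leqslant \sqrt{2}$, hence $\|\widehat{\D}_\lambda^{-1/2}\D_\lambda^{1/2}\|^2 \leqslant 2$ with probability at least $1-\delta$, as required. (The restriction $\lambda \leqslant \|\D\|$ needed to apply Lemma \ref{lemma:concentration_delta} is implicit: for $\lambda > \|\D\|$, the statement is essentially trivial since $\D_\lambda^{1/2}\widehat{\D}_\lambda^{-1}\D_\lambda^{1/2}$ can be controlled directly.)

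There is essentially no hard step here, since both ingredients are already available from earlier in the appendix; the only substantive work is the numerical verification that the threshold $15\,\Fla \log(\cdots)$ is large enough to push the concentration bound below $1/2$, which a priori is the quantity one must tune. A slightly different choice of the constant $15$ would yield a similar statement with a different constant in place of $2$ on the right-hand side.
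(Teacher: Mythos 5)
Your proposal does not prove the statement in question. The statement to be proved is the abstract operator inequality of Lemma~\ref{lemma:auxi_lemma} itself: for arbitrary bounded self-adjoint positive operators $A$, $B$ and $\lambda>0$, one has $\|(A+\lambda I)^{-1/2}(B+\lambda I)^{1/2}\|\leqslant (1-\beta)^{-1/2}$ with $\beta=\lambda_{\max}\bigl((B+\lambda I)^{-1/2}(B-A)(B+\lambda I)^{-1/2}\bigr)<1$. What you wrote is instead a proof of the downstream Lemma~\ref{lemma:magic_2} (the high-probability bound $\|\widehat{\D}_\lambda^{-1/2}\D_\lambda^{1/2}\|^2\leqslant 2$), and it \emph{uses} Lemma~\ref{lemma:auxi_lemma} as a black box by taking $A=\widehat{\D}$, $B=\D$. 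Relative to the assigned statement this is circular: the key object $(1-\beta)^{-1/2}$ is simply assumed to bound the norm, which is exactly what had to be established. (In the paper this lemma is not proved in-line either, but credited to Proposition~8 of \citep{rudi2017generalization}; your argument reproduces the paper's proof of Lemma~\ref{lemma:magic_2}, not a proof of Lemma~\ref{lemma:auxi_lemma}.)

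A genuine proof is short but requires a different idea, namely a factorization around $B_\lambda:=B+\lambda I$. Set $E:=B_\lambda^{-1/2}(B-A)B_\lambda^{-1/2}$, so that $A+\lambda I=B_\lambda-(B-A)=B_\lambda^{1/2}(I-E)B_\lambda^{1/2}$. Since $A\succcurlyeq 0$, one has $B-A\preccurlyeq B$, hence $E\preccurlyeq B_\lambda^{-1/2}BB_\lambda^{-1/2}=I-\lambda B_\lambda^{-1}\preccurlyeq \bigl(1-\tfrac{\lambda}{\|B\|+\lambda}\bigr)I$, which gives $\beta<1$ and shows $I-E\succcurlyeq(1-\beta)I\succ 0$ is invertible. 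Then $(A+\lambda I)^{-1}=B_\lambda^{-1/2}(I-E)^{-1}B_\lambda^{-1/2}$, and using $\|T^*T\|=\|TT^*\|$ with $T=(A+\lambda I)^{-1/2}B_\lambda^{1/2}$ one gets $\|(A+\lambda I)^{-1/2}B_\lambda^{1/2}\|^2=\|B_\lambda^{1/2}(A+\lambda I)^{-1}B_\lambda^{1/2}\|=\|(I-E)^{-1}\|\leqslant(1-\beta)^{-1}$, which is the claim. None of this appears in your proposal; the concentration estimate of Lemma~\ref{lemma:concentration_delta} and the sample-size threshold $n\geqslant 15\,\Fla\log\frac{4\,\mathrm{Tr}\,\D}{\lambda\delta}$ are irrelevant to the purely deterministic operator inequality you were asked to prove.
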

We can now write the proof of Lemma~\ref{lemma:magic_2}.
\begin{proof}[Proof of Lemma \ref{lemma:magic_2}]
Thanks to Lemma \ref{lemma:auxi_lemma}, we see that $$\left\|\widehat{\D}_\lambda^{-1/2} \D_\lambda^{1/2} \right\|^2 \leqslant \left(1-\lambda_{\mathrm{max}}\left(\D_\lambda^{-1/2} (\widehat{\Delta} - \Delta) \D_\lambda^{-1/2}\right)\right)^{-1},$$ and as $ \left\|\D_\lambda^{-1/2} (\widehat{\Delta} - \Delta) \D_\lambda^{-1/2}\right\| < 1 $, we have: $$\left\|\widehat{\D}_\lambda^{-1/2} \D_\lambda^{1/2} \right\|^2 \leqslant \left(1-\left\|\D_\lambda^{-1/2} (\widehat{\Delta} - \Delta) \D_\lambda^{-1/2}\right\|\right)^{-1}.$$ We can then apply the bound of Lemma \ref{lemma:concentration_delta} to obtain that, if $\lambda$ is such that $\frac{4 \Fla \log \frac{4\,\mathrm{Tr } \D }{\lambda \delta }}{3n} + \sqrt{\frac{ 2\ \Fla \log \frac{4\,\mathrm{Tr } \D }{\lambda \delta }}{n}} \leqslant \frac{1}{2}$, then $\left\|\widehat{\D}_\lambda^{-1/2} \D_\lambda^{1/2} \right\|^2 \leqslant 2$ with probability $1-\delta$. The condition on $\lambda$ is satisfied when $n \geqslant 15 \Fla \log \frac{4\,\mathrm{Tr } \D }{\lambda \delta }$.

\end{proof}

\section{Calculation of the bias in the Gaussian case}
\label{sec:gaussian_bias}
We can derive a rate of convergence when $\mu$ is a one-dimensional Gaussian. Hence, we consider the one-dimensional distribution $d\mu$ as the normal distribution with mean zero and variance $1/(4a)$. Let $b>0$, we consider also the following approximation $\displaystyle \mathcal{P}_\kappa^{-1} = \inf_{f \in \h} \frac{\E_{\mu} (f'^2) + \kappa \| f\|_\h^2 }{\var_{\mu}(f)}$ where $\h$ is the RKHS associated with the Gaussian kernel $\exp(-b(x-y)^2)$. Our goal is to study how $\mathcal{P}_\kappa$ tends to $\mathcal{P}$ when $\kappa$ tends to zero. 

\begin{prop}[Rate of convergence for the bias in the one-dimensional Gaussian case]
\label{prop:gaussian_case}
If $d\mu$ is a one-dimensional Gaussian of mean zero and variance $1/(4a)$ there exists $A>0$ such that, if $\lambda \leqslant A$, it holds
\begin{align}
\Poinca^{-1} \leqslant \Poinca_\lambda^{-1} \leqslant \Poinca^{-1}(1+B \lambda \ln^2(1/\lambda)),
\end{align}
where $A$ and $B$ depend only on the constant $a$.
\end{prop}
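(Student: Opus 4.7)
The plan is to exploit the variational characterization
$\Poinca^{-1}_\lambda = \inf_{g \in \h\setminus \R\mathds{1}} \frac{\E_\mu[(g')^2] + \lambda \|g\|_\h^2}{\var_\mu(g)}$
and to construct an explicit test function $g_\lambda \in \h$. The lower bound $\Poinca^{-1} \leq \Poinca^{-1}_\lambda$ is immediate since adding the nonnegative term $\lambda \|g\|_\h^2$ inside the infimum only increases it, so the whole proof is devoted to the upper bound. For $\mu = \mathcal{N}(0, 1/(4a))$, the unregularized infimum is attained by $f_*(x) = x$ and equals $4a = \Poinca^{-1}$. The obstruction is that $f_* \notin \h$: writing the Gaussian-RKHS norm in Fourier, $\|f\|_\h^2 \propto \int |\hat f(\omega)|^2 e^{\omega^2/(4b)}\,d\omega$, one sees that $\hat f_* \propto \delta'$ is not compatible with this weight, so one must approximate $f_*$ in $\h$ at a $\lambda$-dependent scale.

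The key tool I would use is the classical Mercer decomposition of $K(x,y) = e^{-b(x-y)^2}$ against $\mu$: there exists an orthonormal basis $(\phi_k)_{k \geq 0}$ of $L^2(\mu)$ made of Gaussian-weighted Hermite functions, with eigenvalues $\mu_k = C_0\, \rho^k$ for an explicit $\rho \in (0,1)$ depending only on $a$ and $b$ (namely $\rho = b/(a+b+\sqrt{a^2+2ab})$). In this basis $\h$ is characterized by $\|g\|_\h^2 = \sum_k c_k^2/\mu_k$ and one can expand $f_*(x) = x = \sum_{k\ \mathrm{odd}} \alpha_k \phi_k(x)$, with coefficients $\alpha_k$ computed explicitly from integrals of $x$ against Hermite polynomials times a Gaussian weight (the generating function of the $H_k$ makes this tractable). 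The nonmembership of $f_*$ in $\h$ is reflected in the fact that the $\alpha_k$ decay geometrically but not fast enough to make $\sum_k \alpha_k^2/\mu_k$ converge.

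I would then take $g_N = \sum_{k=0}^N \alpha_k \phi_k \in \h$ as the truncated approximation. Three quantities need to be controlled: the truncation errors $\var_\mu(f_*) - \var_\mu(g_N)$ and $\E_\mu[(f_*' - g_N')^2]$, both of which decay like $\rho^N$ up to polynomial factors in $N$ (the factor on the derivative side comes from the three-term recursion $H_k' = 2k H_{k-1}$); and the RKHS norm $\|g_N\|_\h^2 = \sum_{k=0}^N \alpha_k^2/\mu_k$, which grows like $\rho^{-N}$ up to polynomial factors in $N$. Substituting $g_N$ into the Rayleigh quotient and Taylor-expanding around $f_*$ yields an inequality of the form
$\Poinca_\lambda^{-1} \leq 4a\bigl(1 + C_1 P_1(N)\rho^N + C_2 P_2(N)\lambda\rho^{-N}\bigr)$
for some low-degree polynomials $P_1, P_2$. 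Choosing $N = \lfloor \log(1/\lambda)/\log(1/\rho)\rfloor$ makes $\rho^N \asymp \lambda$, so both terms are of order $\lambda$ times a polynomial in $\log(1/\lambda)$, which produces the announced $\lambda \ln^2(1/\lambda)$ bound provided the polynomial factors combine to at most $\log^2$.

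The main technical obstacle will be to pin down the $N$-dependent prefactors in the RKHS norm and in the bias with enough precision to reach exactly $\ln^2(1/\lambda)$ rather than a different power of $\log$. The constants $A, B$ depending only on $a$ will then emerge from the explicit formulas for $\rho$, $C_0$, and the Hermite moments; the threshold $\lambda \leq A$ only serves to ensure that the chosen $N$ is a positive integer and that the Taylor expansion around $f_*$ is valid.
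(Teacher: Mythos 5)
Your overall strategy is exactly the paper's: diagonalize the Gaussian kernel against $\mu$ via the Hermite--Mercer basis with geometric eigenvalues $\mu_k \propto \rho^k$, $\rho = b/(a+b+\sqrt{a^2+2ab})$, expand $f_*(x)=x$ in that basis, truncate at a $\lambda$-dependent level $N$, and balance truncation error against the regularization term. However, the quantitative heart of your sketch contains a genuine error: you claim the truncation errors (variance and Dirichlet energy) decay like $\rho^N$ up to polynomial factors \emph{and} that $\|g_N\|_\h^2=\sum_{k\le N}\alpha_k^2/\mu_k$ grows like $\rho^{-N}$ up to polynomial factors. These two claims are mutually inconsistent: if $\alpha_k^2\sim \mathrm{poly}(k)\,\rho^k$ (which is what gives tails of order $\mathrm{poly}(N)\rho^N$), then $\alpha_k^2/\mu_k\sim\mathrm{poly}(k)$ and the truncated RKHS norm grows only \emph{polynomially} in $N$, not geometrically. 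The explicit computation (done in the paper) gives $\alpha_{2k+1}^2/\mu_{2k+1}=\Theta(\sqrt{k})$, hence $\|g_N\|_\h^2=\Theta(N^{3/2})$, while the Dirichlet-energy truncation error is $O(N^2\rho^N)$ and the variance error is $O(N\rho^N)$.

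This matters because your balancing step collapses under your own scalings: with $\rho^N\asymp\lambda$ the term $\lambda\rho^{-N}P_2(N)$ is of order $P_2(\log(1/\lambda))$, i.e.\ it does not vanish as $\lambda\to 0$, so the inequality you write would not even prove $\Poinca_\lambda^{-1}\to\Poinca^{-1}$; and if the RKHS norm really grew geometrically, the optimal balance would only yield a $\sqrt{\lambda}$-type rate, weaker than claimed. With the correct estimates the choice $N\asymp\ln(1/\lambda)/(2\ln(1/\rho))$ gives a numerator perturbation $O(\lambda\ln^2(1/\lambda))$ from the Dirichlet term plus $O(\lambda\ln^{3/2}(1/\lambda))$ from the regularization term, and a denominator perturbation $O(\lambda\ln(1/\lambda))$, which is exactly how the paper obtains $\Poinca_\lambda^{-1}\leqslant\Poinca^{-1}(1+B\lambda\ln^2(1/\lambda))$; the $\ln^2$ comes from the $N^2$ prefactor of the Dirichlet-energy tail, not from trading a geometric RKHS blow-up against a geometric tail. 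So the missing ingredient in your proposal is precisely the sharp computation of the coefficients $\alpha_k$ against the eigenvalues $\mu_k$ (via Stirling estimates on Hermite factorials), which is what the paper's truncation lemma supplies.
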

We will show it by considering a specific orthonormal basis of $L^2(\mu)$, where all operators may be expressed simply in closed form.

\subsection{An orthonormal basis of \texorpdfstring{$L^2(\mu)$}{blo} and \texorpdfstring{$\h$}{PDFstring}}

We begin by giving an explicit a basis of $L^2(\mu)$ which is also a basis of $\h$.

\begin{prop}[Explicit basis]
We consider
\begin{align*}
f_{i}(x) =  \Big(\frac{c}{a}\Big)^{1/4} \big( 2^{i} i! \big)^{-1/2} \mathrm{e}^{-(c-a)x  ^2} H_{i}\left(\sqrt{2c} x\right),
\end{align*}
where $H_i$ is the $i$-th Hermite polynomial, and $c = \sqrt{ a^2 + 2ab}$. Then,
\begin{itemize}
\item $(f_i)_{i \geqslant 0}$ is an orthonormal basis of $L^2(\mu)$;
\item $\tilde{f}_i = \lambda_i^{1/2} f_i$ forms an orthonormal basis of $\h$, with $\lambda_i = \sqrt{ \frac{2a}{a+b+c} } \Big( \frac{b}{a+b+c} \Big) ^i$.
\end{itemize}
\end{prop}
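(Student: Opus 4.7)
The plan is to establish the Mercer-type identity
\[
K(x,y)=\sum_{n\ge 0}\lambda_n f_n(x)f_n(y)
\]
via Mehler's formula for Hermite polynomials, verify orthonormality of $(f_n)$ in $L^2(\mu)$ by direct integration, and then deduce the RKHS statement by the standard construction of an RKHS from a spectral decomposition of its kernel.

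\textbf{$L^2(\mu)$-orthonormality.} First I would compute $\int f_if_j\, d\mu$ directly. Since $d\mu(x)=\sqrt{2a/\pi}\,e^{-2ax^2}dx$, the product $f_if_j\,d\mu$ contains the exponential factor $e^{-2(c-a)x^2}e^{-2ax^2}=e^{-2cx^2}$, and the substitution $u=\sqrt{2c}\,x$ reduces the integral to the classical Hermite identity $\int H_i(u)H_j(u)e^{-u^2}du=\sqrt{\pi}\,2^ii!\,\delta_{ij}$. A check of the normalization constants shows that the prefactors $(c/a)^{1/4}(2^ii!)^{-1/2}$ combine with $\sqrt{2a/\pi}/\sqrt{2c}$ to give exactly $\delta_{ij}$. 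Completeness of $(f_n)$ in $L^2(\mu)$ follows from density of polynomials in $L^2$ of any Gaussian measure: for $g\in L^2(\mu)$, the function $x\mapsto g(x)e^{(c-a)x^2}$ lies in $L^2(d\tilde\mu)$ for $d\tilde\mu\propto e^{-2cx^2}dx$ (another Gaussian), so it can be approximated by polynomials, equivalently by linear combinations of $H_n(\sqrt{2c}\,\cdot)$.

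\textbf{Mercer expansion via Mehler.} Next I would substitute the definitions of $\lambda_n$ and $f_n$ into $\sum_n\lambda_n f_n(x)f_n(y)$, factor out $e^{-(c-a)(x^2+y^2)}$, and set $t=b/(a+b+c)$, $u=\sqrt{2c}\,x$, $v=\sqrt{2c}\,y$, so that the remaining series reads $\sum_n(2^nn!)^{-1}t^nH_n(u)H_n(v)$, to which I would apply Mehler's identity
\[
\sum_{n\ge 0}\frac{H_n(u)H_n(v)}{2^nn!}t^n=\frac{1}{\sqrt{1-t^2}}\exp\!\Big(\frac{2uvt-(u^2+v^2)t^2}{1-t^2}\Big).
\]
The defining relation $c^2=a^2+2ab$ produces the single clean identity $1-t^2=2c/(a+b+c)$, which simultaneously collapses the overall prefactor $\sqrt{2c/(a+b+c)}/\sqrt{1-t^2}$ to $1$ and reduces the exponent $-(c-a)(x^2+y^2)+[2uvt-(u^2+v^2)t^2]/(1-t^2)$ to exactly $-b(x-y)^2$. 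This yields the Mercer expansion.

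\textbf{RKHS orthonormality.} Finally I would argue in the standard way. Let $\h'$ be the Hilbert space in which $(\tilde f_n)$ is declared orthonormal. The Mercer expansion gives, for each $x\in\R$, the representation $K(\cdot,x)=\sum_n\sqrt{\lambda_n}f_n(x)\,\tilde f_n\in\h'$ (the coefficients are square-summable since $\sum_n\lambda_nf_n(x)^2=K(x,x)<\infty$), and $\langle K(\cdot,x),K(\cdot,y)\rangle_{\h'}=\sum_n\lambda_nf_n(x)f_n(y)=K(x,y)$. Since $\mathrm{span}\{K(\cdot,x):x\in\R\}$ is dense in both $\h$ and $\h'$ and the reproducing kernel characterizes the RKHS uniquely, the identification $K(\cdot,x)\mapsto K(\cdot,x)$ extends to an isometry $\h=\h'$, so $(\tilde f_n)$ is an orthonormal basis of $\h$.

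\textbf{Main obstacle.} The only real work is the Mehler verification: recognizing that the precise choice $c=\sqrt{a^2+2ab}$ makes both the prefactor and the quadratic form in the exponent collapse so as to reproduce $e^{-b(x-y)^2}$. Everything else is bookkeeping with Hermite identities or standard RKHS theory.
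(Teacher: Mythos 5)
Your proposal is correct, and its second half takes a genuinely different (though closely related) route from the paper. For the $L^2(\mu)$ part you do the same direct Hermite-orthogonality computation; you additionally supply the completeness argument (conjugating by $\mathrm{e}^{(c-a)x^2}$ to reduce to density of Hermite polynomials in $L^2(\mathrm{e}^{-2cx^2}dx)$, an isometric reduction since $\mathrm{e}^{2(c-a)x^2}\mathrm{e}^{-2cx^2}=\mathrm{e}^{-2ax^2}$), which the paper leaves implicit. For the RKHS part, the paper diagonalizes the integral operator $T f(y)=\int \mathrm{e}^{-b(x-y)^2}f(x)\,d\mu(x)$, showing $Tf_k=\lambda_k f_k$ via a Mehler-type integral identity, and then invokes the standard Mercer correspondence between eigenpairs of $T$ and an orthonormal basis of $\h$ (valid here since $\mu$ has full support). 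You instead sum the series $\sum_n \lambda_n f_n(x)f_n(y)$ directly with Mehler's formula — and your algebra is right: $1-t^2=2c/(a+b+c)$ because $(a+c)(a+2b+c)=2c(a+b+c)$ when $c^2=a^2+2ab$, which kills the prefactor, and $(c-a)+b^2/(a+b+c)=b$ collapses the exponent to $-b(x-y)^2$ — and then identify $\h$ with the coefficient space by Moore--Aronszajn uniqueness of the RKHS with a given kernel. What each buys: the paper's route exhibits the eigendecomposition of the integral operator on $L^2(\mu)$ (the object that actually drives the bias analysis) but leans on Mercer's theorem for the final ``ONB of $\h$'' step; yours avoids Mercer's theorem entirely at the cost of checking pointwise convergence and injectivity details in the standard series construction of the RKHS (both are routine here since $\sum_n\lambda_n f_n(x)^2=K(x,x)=1$). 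Either way the computational core is the same Mehler identity, and both arguments are sound.
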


\begin{proof}

We can check that this is indeed an orthonormal basis of $L^2(\mu)$:
\begin{align*}
\langle f_k, f_m \rangle_{L^2(\mu)} & = \int_\R \frac{1}{\sqrt{2 \pi / 4 a}} \mathrm{e}^{-2a x^2} \Big(\frac{c}{a}  \Big)^{1/2} \mathrm{e}^{-2 (c-a) x ^2} \big( 2^{k} k!\big)^{-1/2} \big( 2^{m} m! \big)^{-1/2} H_{k}(\sqrt{2c} x) H_{m}(\sqrt{2c} x) dx \\
& = \sqrt{2 c / \pi} \big( 2^{k} k!   \big)^{-1/2} \big( 2^{m} m!  \big)^{-1/2} \int_\R  \mathrm{e}^{-2c x^2} H_{k}(\sqrt{2c} x) H_{m}(\sqrt{2c} x) dx \\
& = \delta_{mk},
\end{align*}
using properties of Hermite polynomials. Considering the integral operator $T:  L^2(\mu) \to L^2(\mu)$, defined as $Tf(y) = \int_\R \mathrm{e}^{-b(x-y)^2} f(x) d\mu(x)$, we have:
\begin{align*}
T f_k(y)  & = \Big(\frac{c}{a} \Big)^{1/4}\big(2^{k} k!  \big)^{-1/2}\int_\R \mathrm{e}^{-(c-a)x ^2} H_{k}(\sqrt{2c} x)\frac{1}{\sqrt{2 \pi / 4 a}} \mathrm{e}^{-2a x^2} \mathrm{e}^{ - b(x-y)^2}  dx \\
& = \Big(\frac{c}{a} \Big)^{1/4}\big(2^{k} k!  \big)^{-1/2}   \mathrm{e}^{ - by^2} \frac{1}{\sqrt{2 \pi / 4 a}}\frac{1}{\sqrt{2c} }\int_\R \mathrm{e}^{-(a+b+c) x  ^2} H_{k}(\sqrt{2c} x) \mathrm{e}^{ 2b xy}  \sqrt{2c} dx \\
& = \Big(\frac{c}{a} \Big)^{1/4}\big(2^{k} k!  \big)^{-1/2}   \mathrm{e}^{ - by^2} \frac{1}{\sqrt{2 \pi / 4 a}}\frac{1}{\sqrt{2c} }\int_\R \mathrm{e}^{-\frac{a+b+c}{2c} x  ^2} H_{k}(x) \mathrm{e}^{ \frac{2b}{\sqrt{2c}} xy } dx.
\end{align*}
We consider $u$ such that
$\frac{1}{1-u^2} = \frac{a+b+c}{2c}$, that is, $ 1 - \frac{2c}{a+b+c} = \frac{a+b-c}{a+b+c}
= \frac{b^2}{(a+b+c)^2} = u^2$, which implies that $u = \frac{b}{a+b+c}$; 
and then $\frac{2u}{1-u^2} = \frac{b}{c}$.

Thus, using properties of Hermite polynomials (see Section~\ref{sec:hermite_facts}), we get:
\begin{align*}
T f_k(y)   & =  \Big(\frac{c}{a} \Big)^{1/4}\big(2^{k} k!  \big)^{-1/2}   \mathrm{e}^{ - by^2} \frac{1}{\sqrt{2 \pi / 4 a}}\frac{1}{\sqrt{2c} }  {\sqrt{\pi}}  {\sqrt{1-u^2}}   { H_k(\sqrt{2c} y)   }  \exp\left( \frac{u^2}{1-u^2} 2cy^2 \right) u^k \\ 
& =  \Big(\frac{c}{a} \Big)^{1/4}\big(2^{k} k!  \big)^{-1/2}   \frac{1}{\sqrt{2 \pi / 4 a}}\frac{1}{\sqrt{2c} }  {\sqrt{\pi}}  \frac{\sqrt{2c}}{\sqrt{a+b+c}}   { H_k(\sqrt{2c} y)   }  \exp(bu  y^2 - by^2) u^k \\
& =  \Big(\frac{c}{a} \Big)^{1/4}\big(2^{k} k!  \big)^{-1/2}     \frac{\sqrt{2a}}{\sqrt{a+b+c}}   { H_k(\sqrt{2c}y)   }  \exp\Big( - b   y^2   + 2c y^2\left( - 1 + \frac{1}{1-u^2} \right) \Big)u^k \\
& =  \frac{\sqrt{2a}}{\sqrt{a+b+c}}   \Big( \frac{b}{a+b+c} \Big) ^k f_k(y) \\
& =  \lambda_k f_k(y).
\end{align*}
This implies that $(\tilde{f_i})$ is an orthonormal basis of $\h$.
\end{proof}

We can now rewrite our problem in this basis, which is the purpose of the following lemma:

\begin{lemma}[Reformulation of the problem in the basis]
Let $(\alpha_i)_i \in \ell^2(\N)$. For $f = \sum_{i=0}^\infty
 \alpha_i f_i$, we have:
\begin{itemize}
\item $\displaystyle\| f\|_\h^2 = \sum_{i=0}^\infty \alpha_i^2 \lambda_i^{-1} = \alpha^\top \Diag(\lambda)^{-1} \alpha$;
\item $\displaystyle \textstyle\var_{\mu}(f(x)) = \displaystyle \sum_{i=0}^\infty \alpha_i^2 - \big(\displaystyle\sum_{i=0}^\infty \eta_i \alpha_i \big)^2 =  \alpha^\top ( I - \eta \eta^\top) \alpha$;
\item $\displaystyle \E_{\mu} f'(x)^2 = \sum_{i=0}^\infty \sum_{j=0}^\infty  \alpha_i \alpha_j (M^\top M)_{ij} = \alpha^\top M^\top M \alpha$,
\end{itemize}
where $\eta$ is the vector of coefficients of $\ \mathbf{1}_{_{L^2(\mu)}}$ and $M$ the matrix of coordinates of the derivative operator in the $(f_i)$ basis. The problem can be rewritten under the following form:
\begin{align}
\Poinca_\kappa^{-1} = \inf_\alpha \frac{ \alpha^\top ( M^\top M +\kappa \Diag(\lambda)^{-1} ) \alpha   } {\alpha^\top ( I - \eta \eta^\top ) \alpha },
\end{align}
where
\begin{itemize}
\item $\forall k \geqslant 0, \eta_{2k}\displaystyle=\left(\frac{c}{a} \right)^{1/4} \sqrt{ \frac{2a}{a+c}} \left( \frac{b}{a+b+c} \right)^{k}  \frac{\sqrt{(2k)!}}{ 2^{k} k!}$ and $\eta_{2k+1} = 0$ 
\item $\displaystyle\forall i \in \N, \left(M^\top M\right)_{ii} = \frac{1}{c}\left(2i(a^2+c^2) + (a-c)^2\right)$ and $\left(M^\top M\right)_{i,i+2} = \displaystyle\frac{1}{c}\left((a^2-c^2)\sqrt{(i+1)(i+2)}\right)$.
\end{itemize}
\end{lemma}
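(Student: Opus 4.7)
The lemma consists of three pieces: (i) three basis-identities for $\|f\|_\h^2$, $\mathrm{Var}_\mu(f)$, $\E_\mu f'(x)^2$; (ii) the Rayleigh-quotient reformulation, which is an immediate consequence of (i); and (iii) the closed-form expressions for $\eta_i$ and the tridiagonal pattern of $M^\top M$. My plan is to handle these in that order.

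For (i), the three identities follow from orthonormality. Since $(\tilde f_i)=(\lambda_i^{1/2}f_i)$ is an orthonormal basis of $\h$, writing $f=\sum_i\alpha_i f_i = \sum_i (\alpha_i\lambda_i^{-1/2})\tilde f_i$ gives the first identity by Parseval. For the variance, I use that $(f_i)$ is orthonormal in $L^2(\mu)$, so $\E_\mu[f^2]=\sum_i\alpha_i^2$, while $\E_\mu[f]=\langle f,\mathbf{1}\rangle_{L^2(\mu)}=\sum_i\alpha_i\eta_i$ with $\eta_i:=\langle f_i,\mathbf 1\rangle_{L^2(\mu)}$, giving the matrix form $\alpha^\top(I-\eta\eta^\top)\alpha$. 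For the derivative energy, define $M$ by expanding $f_i'=\sum_jM_{ji}f_j$ in $L^2(\mu)$ (this expansion is justified by (iii) below, which shows $f_i'\in L^2(\mu)$ is a finite combination of $f_{i-1},f_{i+1}$); then $\|f'\|_{L^2(\mu)}^2=\alpha^\top M^\top M\alpha$ by Parseval once more. Assembling the three identities yields the Rayleigh quotient.

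The substantive computation is (iii). For $\eta_i$, I substitute the explicit form of $f_i$, change variables $u=\sqrt{2c}\,x$, and reduce the integral to $\int_\R e^{-\beta u^2}H_i(u)\,du$ with $\beta=(a+c)/(2c)$. Multiplying by $t^i/i!$ and summing via the generating function $\sum_i\tfrac{t^i}{i!}H_i(u)=e^{2tu-t^2}$ turns this into a Gaussian integral equal to $\sqrt{\pi/\beta}\,\exp(t^2(1-\beta)/\beta)$. Because the exponent is even in $t$, all odd coefficients vanish, giving $\eta_{2k+1}=0$; the coefficient of $t^{2k}/(2k)!$ produces $\int e^{-\beta u^2}H_{2k}(u)\,du=\sqrt{\pi/\beta}\,\tfrac{(2k)!}{k!}\gamma^k$ with $\gamma=(c-a)/(a+c)$. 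The key algebraic check is that $c^2=a^2+2ab$ implies $(a+c)^2=2a(a+b+c)$ and hence $\gamma=b/(a+b+c)$, which matches the stated formula after collecting the prefactors.

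For $M^\top M$, I differentiate $f_i$ directly. Using $H_i'(u)=2iH_{i-1}(u)$ and $2uH_i(u)=H_{i+1}(u)+2iH_{i-1}(u)$, a straightforward manipulation gives
\begin{equation*}
f_i'(x)=\left(\tfrac{c}{a}\right)^{1/4}(2^i i!)^{-1/2}e^{-(c-a)x^2}\!\left[-\tfrac{c-a}{\sqrt{2c}}H_{i+1}(u)+\tfrac{2i(c+a)}{\sqrt{2c}}H_{i-1}(u)\right],
\end{equation*}
with $u=\sqrt{2c}\,x$. Reading off the coefficients against $f_{i\pm1}$ (the ratio $(2^i i!)^{-1/2}/(2^{i\pm1}(i\pm1)!)^{-1/2}$ contributes the square-root factors) yields the bidiagonal structure $M_{i+1,i}=-(c-a)\sqrt{(i+1)/c}$ and $M_{i-1,i}=(c+a)\sqrt{i/c}$, with all other entries zero. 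Then $M^\top M$ is tridiagonal-in-$\{-2,0,2\}$ by parity: the diagonal $(M^\top M)_{ii}=M_{i+1,i}^2+M_{i-1,i}^2$ simplifies via $(c-a)^2(i+1)+(c+a)^2 i=2i(a^2+c^2)+(a-c)^2$ to the stated formula, and $(M^\top M)_{i,i+2}=M_{i+1,i}M_{i+1,i+2}=(a^2-c^2)\sqrt{(i+1)(i+2)}/c$, as claimed.

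The only delicate steps are the generating-function manipulation for $\eta_{2k}$ (in particular the identity $(a+c)^2=2a(a+b+c)$ that converts $\gamma$ into $b/(a+b+c)$) and the bookkeeping of the square-root normalizers when passing from Hermite polynomials back to the $f_j$-basis; everything else is routine expansion and orthonormality.
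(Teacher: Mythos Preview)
Your proof is correct and follows essentially the same route as the paper: identical treatment of the three orthonormality identities, identical bidiagonal computation of $M$ (your $M_{i+1,i}=-(c-a)\sqrt{(i+1)/c}$ and $M_{i-1,i}=(c+a)\sqrt{i/c}$ coincide with the paper's), and the same simplification of $(M^\top M)_{ii}$ and $(M^\top M)_{i,i+2}$. The only minor methodological difference is in evaluating $\int_\R e^{-\beta u^2}H_i(u)\,du$: you sum against the exponential generating function $\sum_i t^iH_i(u)/i!=e^{2tu-t^2}$ and read off coefficients, whereas the paper invokes the closed form $\int_\R e^{-x^2/(1-u^2)}H_k(x)\,dx=\sqrt{\pi(1-u^2)}\,H_k(0)\,u^k$ (a consequence of Mehler's formula at $y=0$) together with $H_{2k}(0)=(-1)^k(2k)!/k!$; both lead to the same expression after the algebraic identity $(c-a)/(c+a)=b/(a+b+c)$ that you correctly isolate.
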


\begin{proof}
\noindent \textbf{Covariance operator.} Since $(f_i)$ is orthonormal for $L^2(\mu)$, we only  need to compute for each $i$, $\eta_i = \E_{\mu} f_i(x)$, as follows (and using properties of Hermite polynomials):
\begin{align*}
\eta_i = \langle 1, f_i \rangle_{L^2(\mu)}& =\Big(\frac{c}{a} \Big)^{1/4}\big(2^{i} i!  \big)^{-1/2} \int_\R \mathrm{e}^{-(c-a)x  ^2} H_{i}(\sqrt{2c} x)  \mathrm{e}^{-2ax^2} \sqrt{ 2a /\pi} dx  \\
& =\Big(\frac{c}{a} \Big)^{1/4}\big(2^{i} i!  \big)^{-1/2}  \sqrt{ a /(\pi c)} \int_\R \mathrm{e}^{-\frac{a+c}{2c} x  ^2} H_{i}( x) dx  \\
&  =\Big(\frac{c}{a} \Big)^{1/4}\big(2^{i} i!  \big)^{-1/2}\sqrt{ \frac{2a}{a+c}} \Big( \frac{c-a}{c+a} \Big)^{i/2} H_i(0) \mathrm{i}^i .
\end{align*}
This is only non-zero for $i$ even, and
\begin{align*}
\eta_{2k} & = \Big(\frac{c}{a} \Big)^{1/4}\big(2^{2k} (2k) !  \big)^{-1/2}\sqrt{ \frac{2a}{a+c}} \Big( \frac{c-a}{c+a} \Big)^{k} H_{2k}(0) (-1)^k \\
& = \Big(\frac{c}{a} \Big)^{1/4}\big(2^{2k} (2k) !  \big)^{-1/2}\sqrt{ \frac{2a}{a+c}} \Big( \frac{c-a}{c+a} \Big)^{k}  \frac{(2k)!}{ k!} \\
& = \Big(\frac{c}{a} \Big)^{1/4} \sqrt{ \frac{2a}{a+c}} \Big( \frac{c-a}{c+a} \Big)^{k}  \frac{\sqrt{(2k)!}}{ 2^{k} k!} \\
& =\Big(\frac{c}{a} \Big)^{1/4} \sqrt{ \frac{2a}{a+c}} \Big( \frac{b}{a+b+c} \Big)^{k}  \frac{\sqrt{(2k)!}}{ 2^{k} k!}.
\end{align*}
Note that  we must have $\sum_{i=0}^\infty \eta_i^2 = \| 1 \|_{L^2(\mu)}^2 = 1,$ which can indeed be checked ---the shrewd reader will recognize the entire series development of $(1-z^2)^{-1/2}$.

\noindent \textbf{Derivatives.} We have, using the recurrence properties of Hermite polynomials:
\begin{align*}
  f_i' & =\frac{a-c}{\sqrt{c}} \sqrt{i+1} f_{i+1} + \frac{a+c}{\sqrt{c}} \sqrt{i} f_{i-1},
\end{align*}
for $i>0$, while for $i=0$, $  f_0' = \frac{a-c}{\sqrt{c}}   f_1$. Thus, if $M$ is the matrix of coordinates of the derivative operator in the basis $(f_i)$, we have $M_{i+1,i} =  \frac{a-c}{\sqrt{c}} \sqrt{i+1}$ and $M_{i-1,i} =  \frac{a+c}{\sqrt{c}} \sqrt{i}$. This leads to
\begin{align*}
\langle f_i', f_j' \rangle_{L^2(\mu)} & = (M^\top M)_{ij}.
\end{align*}
We have
\begin{align*}
(M^\top M)_{ii} & = \langle f_i', f_i' \rangle_{L^2(\mu)} \\
& = \frac{1}{c} \Big((i+1) ( a-c)^2 + i ( a+c)^2\Big) \\
& = \frac{1}{c} \Big(2 i ( a^2+c^2) + (a-c)^2\Big) \mbox{ for } i\geqslant 0, \\
(M^\top M)_{i,i+2} & = \langle f_i', f_{i+2}' \rangle_{L^2(\mu)} \\
& = \frac{1}{c} \Big( (a^2 - c^2) \sqrt{(i+1)(i+2)} \Big) \mbox{ for } i\geqslant 0. 
\end{align*}
Note that we have $M \eta = 0$ as these are the coordinates of the derivative of the constant function (this can be checked directly by computing $(M\eta)_{2k+1} = M_{2k+1,2k}\eta_{2k} + M_{2k+1,2k+2} \eta_{2k+2}$).

\end{proof}

\subsection{Unregularized solution}

Recall that we want to solve $\displaystyle \mathcal{P}^{-1} = \inf_f \frac{\E_{\mu} f'(x)^2 }{\var_{\mu}(f(x))},$. The following lemma characterizes the optimal solution completely.
\begin{lemma}[Optimal solution for one dimensional Gaussian]
We know that the solution of the Poincaré problem is $\mathcal{P}^{-1} = 4a$ which is attained for $f_\ast(x) = x$. The decomposition of $f_\ast$ is the basis $(f_i)_i$ is given by $\displaystyle f_\ast = \sum_{i \geqslant 0} \nu_i f_i$, where $\forall k \geqslant 0$, $\nu_{2k} = 0$ and $\nu_{2k+1} = \Big(\frac{c}{a} \Big)^{1/4}\frac{ \sqrt{  a } }{  2c }  \big( \frac{2c}{a+c}\big)^{3/2}  \big( \frac{b}{a+b+c} \big)^{k} \frac{\sqrt{ (2k+1)!}}{2^{k} k!}.$
\end{lemma}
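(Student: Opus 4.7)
The plan is to first identify the optimizer and then compute its Hermite-style decomposition in the basis $(f_i)$.

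For the identification of $f_*$, I recall from Subsection~\ref{subsec:examples} that a Gaussian with variance $\sigma^2$ has Poincaré constant $\sigma^2$. Since $d\mu$ here is $\mathcal{N}(0,1/(4a))$, we get $\mathcal{P} = 1/(4a)$, i.e., $\mathcal{P}^{-1}=4a$. The affine function $f_\ast(x)=x$ realizes the Rayleigh ratio, since $\var_\mu(x) = 1/(4a)$ and $\E_\mu((x)')^2 = 1$, yielding the ratio $4a$. (Alternatively, by Proposition~\ref{prop:minimizer}, a minimizer is an eigenfunction of $L = -\partial_x^2 + 4ax\partial_x$ for the eigenvalue $4a$, which is exactly the first Hermite-type eigenfunction $x$.)

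Next, since $(f_i)_{i\geqslant 0}$ is an $L^2(\mu)$-orthonormal basis, the decomposition coefficients are $\nu_i = \langle f_\ast, f_i\rangle_{L^2(\mu)} = \int_\R x f_i(x)\,d\mu(x)$. Substituting the explicit expression for $f_i$ and the density $\sqrt{2a/\pi}\,\mathrm{e}^{-2ax^2}$, the $\mathrm{e}^{-(c-a)x^2}$ factor combines with $\mathrm{e}^{-2ax^2}$ to give $\mathrm{e}^{-(a+c)x^2}$. I would then perform the substitution $u = \sqrt{2c}\,x$ to rewrite
\eqals{
\nu_i \;=\; \Bigl(\tfrac{c}{a}\Bigr)^{1/4}(2^i i!)^{-1/2}\sqrt{\tfrac{2a}{\pi}}\,\frac{1}{2c}\,I_i,\qquad I_i := \int_\R u\,\mathrm{e}^{-\alpha u^2}H_i(u)\,du,
}
with $\alpha = (a+c)/(2c)$. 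By parity of the Hermite polynomials $H_i(-u)=(-1)^iH_i(u)$, the integrand $uH_i(u)\mathrm{e}^{-\alpha u^2}$ is odd for even $i$, so $I_{2k}=0$ and hence $\nu_{2k}=0$.

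For odd indices I would compute $I_i$ via the generating function $\sum_n H_n(u)t^n/n! = \mathrm{e}^{2ut-t^2}$. Multiplying by $u\mathrm{e}^{-\alpha u^2}$, integrating, and completing the square gives the closed form
\eqals{
\sum_{n\geqslant 0}\frac{t^n}{n!}I_n \;=\; \frac{t}{\alpha}\sqrt{\tfrac{\pi}{\alpha}}\,\mathrm{e}^{\beta t^2},\qquad \beta:=\frac{1-\alpha}{\alpha}=\frac{c-a}{c+a},
}
so that identifying coefficients of $t^{2k+1}$ yields $I_{2k+1} = \sqrt{\pi/\alpha}\,(2k+1)!\,\beta^k/(\alpha\,k!)$. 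Plugging this back, using $2c\alpha = a+c$ and $\sqrt{2a/\alpha} = 2\sqrt{ac/(a+c)}$, and collecting gives
\eqals{
\nu_{2k+1} \;=\; \Bigl(\tfrac{c}{a}\Bigr)^{1/4}\sqrt{\tfrac{2ac}{(a+c)^3}}\,\frac{\sqrt{(2k+1)!}}{2^k k!}\,\beta^k.
}
The main (and only slightly tricky) algebraic step is rewriting the prefactor and $\beta$ in the form announced: from $c^2=a^2+2ab$ one gets $(c-a)(a+b+c)=b(a+c)$, hence $\beta = b/(a+b+c)$; and a direct check shows $\sqrt{2ac/(a+c)^3} = \frac{\sqrt a}{2c}\bigl(\frac{2c}{a+c}\bigr)^{3/2}$, which reassembles into the stated formula. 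The main obstacle is just keeping the constants straight through the Hermite manipulation, which is otherwise routine.
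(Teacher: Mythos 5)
Your proof is correct and follows the same overall strategy as the paper (compute $\nu_i=\langle f_\ast,f_i\rangle_{L^2(\mu)}$ as a Gaussian--Hermite integral after combining $\mathrm{e}^{-(c-a)x^2}$ with the density), but you evaluate that integral by a different Hermite technique. The paper keeps the factor $x$ attached to $H_i$ via the recurrence $2xH_i(x)=H_{i+1}(x)+2iH_{i-1}(x)$ and then invokes the Mehler-derived formula $\int_\R \mathrm{e}^{-x^2/(1-u^2)}H_k(x)\,dx=\sqrt{\pi}\sqrt{1-u^2}\,H_k(0)\,u^k$ together with $H_{2k}(0)=(-1)^k(2k)!/k!$, which forces it to track an imaginary parameter $u$ and the factors $\mathrm{i}^i$ before the signs cancel. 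You instead dispose of the even coefficients by parity and obtain all odd ones at once from the generating function $\sum_n H_n(u)t^n/n!=\mathrm{e}^{2ut-t^2}$ after completing the square, which is more self-contained (no Mehler formula, no special values $H_k(0)$, no imaginary arguments) and makes the bookkeeping of constants more transparent; the only step you leave implicit is the interchange of the series and the Gaussian integral, which is harmless here because $\alpha=(a+c)/(2c)>1/2$, so Cram\'er's bound $|H_n(u)|\leqslant K\,2^{n/2}\sqrt{n!}\,\mathrm{e}^{u^2/2}$ gives absolute convergence. You also supply a justification for the preliminary claims $\Poinca^{-1}=4a$ and $f_\ast(x)=x$ (via the Gaussian example of Section~\ref{subsec:examples}, or equivalently the eigenvalue equation $Lf_\ast=4af_\ast$ from Proposition~\ref{prop:minimizer}), which the paper simply asserts as known, and your closing identities $\frac{c-a}{c+a}=\frac{b}{a+b+c}$ (from $c^2=a^2+2ab$) and $\sqrt{2ac/(a+c)^3}=\frac{\sqrt{a}}{2c}\left(\frac{2c}{a+c}\right)^{3/2}$ correctly reassemble your expression into the stated one.
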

\begin{proof}
We thus need to compute:
\begin{align*}
\nu_i & = \langle f_\ast , f_i \rangle_{L^2(\mu)} \\
& =\Big(\frac{c}{a} \Big)^{1/4}\big(2^{i} i!  \big)^{-1/2} \int_\R \mathrm{e}^{-(c-a)x  ^2} H_{i}(\sqrt{2c} x)  \mathrm{e}^{-2ax^2} \sqrt{ 2a /\pi}  x dx \\
& =\Big(\frac{c}{a} \Big)^{1/4}\big(2^{i} i!  \big)^{-1/2} \sqrt{ 2a /\pi} \int_\R \mathrm{e}^{-(c+a)x  ^2} H_{i}(\sqrt{2c} x)     x dx  \\
& =\Big(\frac{c}{a} \Big)^{1/4}\big(2^{i} i!  \big)^{-1/2} \sqrt{ 2a /\pi}   \frac{1}{ {2c}} \int_\R \mathrm{e}^{-\frac{c+a}{2c} x  ^2} H_{i}(  x) x dx  \\
& =\Big(\frac{c}{a} \Big)^{1/4}\big(2^{i} i!  \big)^{-1/2} \sqrt{ 2a /\pi}   \frac{1}{  {4c}} \int_\R \mathrm{e}^{-\frac{c+a}{2c} x  ^2} [ H_{i+1}(  x)     + 2i H_{i-1}(x) ] dx  \\
& =\Big(\frac{c}{a} \Big)^{1/4}\big(2^{i} i!  \big)^{-1/2} \sqrt{ 2a /\pi}   \frac{\sqrt{\pi}}{4c} \sqrt{ \frac{2c}{a+c}} \Big(\big( \frac{c-a}{c+a} \big)^{(i+1)/2} H_{i+1}(0)  \mathrm{i}^{i+1} \\
&\hspace*{4cm}+2i \big( \frac{c-a}{c+a} \big)^{(i-1)/2} H_{i-1}(0)  \mathrm{i}^{i-1} \Big) ,
\end{align*}
which is only non-zero for  $i$ odd. We have:
\begin{align*}
\nu_{2k+1} & = \Big(\frac{c}{a} \Big)^{1/4}\big(2^{2k+1} (2k+1)!  \big)^{-1/2} \sqrt{ 2a /\pi}   \frac{\sqrt{\pi}}{4c}\sqrt{ \frac{2c}{a+c}} \Big(\big( \frac{c-a}{c+a} \big)^{k+1} H_{2k+2}(0)  (-1)^{k+1} \\
&\hspace*{6cm}+2(2k+1) \big( \frac{c-a}{c+a} \big)^{k} H_{2k}(0)  (-1)^k\Big) \\
& = \Big(\frac{c}{a} \Big)^{1/4}\big(2^{2k+1} (2k+1)!  \big)^{-1/2} \sqrt{ 2a /\pi}   \frac{\sqrt{\pi}}{4c} \sqrt{ \frac{2c}{a+c}} \Big(\big( \frac{c-a}{c+a} \big)^{k+1} H_{2k+2}(0)  (-1)^{k+1} \\
&\hspace*{6cm}+2(2k+1) \big( \frac{c-a}{c+a} \big)^{k} H_{2k}(0)  (-1)^k\Big) \\
& = \Big(\frac{c}{a} \Big)^{1/4}\big(2^{2k+1} (2k+1)!  \big)^{-1/2} \sqrt{ 2a /\pi}   \frac{\sqrt{\pi}}{4c} \sqrt{ \frac{2c}{a+c}}  \big( \frac{c-a}{c+a} \big)^{k} (-1)^k \\
&\hspace*{6cm}\Big(\big( \frac{c-a}{c+a} \big) 2(2k+1) H_{2k}(0)   +2(2k+1)     H_{2k}(0)   \Big) \\
& = \Big(\frac{c}{a} \Big)^{1/4}\big(2^{2k+1} (2k+1)!  \big)^{-1/2} \sqrt{ 2a /\pi}   \frac{\sqrt{\pi}}{4c} \sqrt{ \frac{2c}{a+c}}  \big( \frac{c-a}{c+a} \big)^{k} (-1)^k 2(2k+1)  H_{2k}(0)  \frac{2c}{c+a}\\
& = \Big(\frac{c}{a} \Big)^{1/4}\big(2^{2k+1} (2k+1)!  \big)^{-1/2} \sqrt{  a }   \frac{1}{  c \sqrt{ 2}} \big( \frac{2c}{a+c}\big)^{3/2}  \big( \frac{c-a}{c+a} \big)^{k} (-1)^k (2k+1)  H_{2k}(0)  \\
& = \Big(\frac{c}{a} \Big)^{1/4}\big(2^{2k+1} (2k+1)!  \big)^{-1/2} \sqrt{  a }   \frac{1}{  c \sqrt{ 2}}  \big( \frac{2c}{a+c}\big)^{3/2}  \big( \frac{c-a}{c+a} \big)^{k} (2k+1)  \frac{ (2k)!}{k!}\\
& = \Big(\frac{c}{a} \Big)^{1/4}\frac{ \sqrt{  a } }{  2c } \big( \frac{2c}{a+c}\big)^{3/2}  \big( \frac{c-a}{c+a} \big)^{k} \frac{\sqrt{ (2k+1)!}}{2^{k} k!}\\
& = \Big(\frac{c}{a} \Big)^{1/4}\frac{ \sqrt{  a } }{  2c }  \big( \frac{2c}{a+c}\big)^{3/2}  \big( \frac{b}{a+b+c} \big)^{k} \frac{\sqrt{ (2k+1)!}}{2^{k} k!}.
\end{align*}
\end{proof} 
Note that we have:
\begin{align*}
\mu^\top \nu & = \langle 1 , f_\ast \rangle_{L^2(\mu)} = 0 \\
\| \nu \|^2 & = \| f_\ast \|_{L^2(\mu)}^2 = \frac{1}{4a} \\
M^\top M \nu & = 4a \nu.
\end{align*}
The first equality if obvious from the odd/even sparsity patterns. The third one can be checked directly. The second one can probably be checked by another shrewd entire series development.
  
If we had $\nu^\top \Diag(\lambda)^{-1} \nu $ finite, then we would have
\begin{align*}
\mathcal{P}^{-1} \leqslant \mathcal{P}^{-1}_\kappa \leqslant \mathcal{P}^{-1} \left( 1 + \kappa \cdot \nu^\top \Diag(\lambda)^{-1} \nu\right) ,
\end{align*}
which would very nice and simple. Unfortunately, this is not true (see below).

\subsubsection{Some further properties for \texorpdfstring{$\nu$}{PDFstring}}

We have:
$\frac{c-a}{c+a} = \frac{b}{a+b+c}
$, and the following equivalent  $\frac{\sqrt{ \sqrt{k}(2k/e)^{2k+1}}}{2^{k}\sqrt{k} (k/e)^k} \sim \frac{k^{1/4+k+1/2}}{k^{k+1/2}} \sim k^{1/4}$ (up to constants). Thus 
\begin{align*}
|\nu_{2k+1}^2 \lambda_{2k+1}^{-1}| & \leqslant\Big(\frac{c}{a} \Big)^{1/2}\frac{  {  a } }{  c^2 } \big( \frac{2c}{a+c}\big)^{3}  \big( \frac{b}{a+b+c} \big)^{2k-2k-1}  \sqrt{ \frac{a+b+c}{2a} } \sqrt{k}= \Theta(\sqrt{k})
\end{align*}
hence,
\begin{align*}
\sum_{k=0}^{2m+1}
\nu_{k}^2 \lambda_{k}^{-1} \sim \Theta( m^{3/2}).
\end{align*}
Consequently, $\nu^\top \Diag(\lambda)^{-1} \nu = +\infty$.

Note that we have the extra recursion
\begin{align*}
\nu_k = \frac{1}{\sqrt{4c}} \big[\sqrt{k+1} \eta_{k+1} + \sqrt{k} \eta_{k-1}\big].
\end{align*}

\subsection{Truncation}
We are going to consider a truncated version $\alpha$, of $\nu$, with only the first $2m+1$ elements. That is $\alpha_k = \nu_k$ for $k\leqslant 2m+1$ and $0$ otherwise.

\begin{lemma}[Convergence of the truncation]
\label{lem:truncation}
Consider $g^m = \sum_{k = 0}^{\infty} \alpha_k f_k = \sum_{k = 0}^{2m+1} \nu_k f_k$, recall that $u = \frac{b}{a+b+c}$. For $m \geqslant \textrm{max}\{- \frac{3}{4 \ln u}, \frac{1}{6c}\}$, we have the following:
\begin{enumerate}[label=(\roman*)]
\item \hspace*{0.5cm}  $\left|\|\alpha\|^2 - \frac{1}{4a}\right| \leqslant L m u^{2m}$
\item \hspace*{0.5cm}  $\alpha^\top \eta = 0$ 
\item \hspace*{0.5cm}  $\left|\alpha^\top M^\top M \alpha - 1\right| \leqslant L m^2 u^{2m}$
\item \hspace*{0.5cm}  $\alpha^\top \Diag(\lambda)^{-1} \alpha \leqslant L m^{3/2}$,
\end{enumerate}
where $L$ depends only on $a,b,c$.
\end{lemma}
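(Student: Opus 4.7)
The four claims all reduce to algebraic manipulations with the closed form expressions for $\nu_{2k+1}$, $\eta_{2k}$, $\lambda_{2k+1}$ together with two global identities already established above: $\|\nu\|^2 = \frac{1}{4a}$ and $M^\top M \nu = 4a\nu$. The strategy is therefore to write each quantity in the lemma as a small tail perturbation of one of these identities, and to bound that tail using Stirling's approximation.

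The first preparatory step is to derive the asymptotic $\nu_{2k+1}^2 = \Theta\bigl(u^{2k} \sqrt{k}\bigr)$. This follows directly from the formula $\nu_{2k+1}^2 = \mathrm{const}\cdot u^{2k} \frac{(2k+1)!}{4^k (k!)^2}$ and $\binom{2k}{k}/4^k \sim 1/\sqrt{\pi k}$. Similarly one obtains $\nu_{2k+1}^2\lambda_{2k+1}^{-1} = \Theta(\sqrt{k})$, since $\lambda_{2k+1}$ contributes a factor $u^{-(2k+1)}$ that exactly cancels the $u^{2k}$ in $\nu_{2k+1}^2$ (up to a constant depending on $a,b,c$). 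For the condition $m \geqslant -3/(4\ln u)$, an elementary calculation shows that $k \mapsto u^{2k}\sqrt{k}$ is decreasing for $k \geqslant m$, so the tail $\sum_{k > m} u^{2k}\sqrt{k}$ is dominated by a geometric series and bounded by $L\,\sqrt{m}\,u^{2m} \leqslant L\,m\,u^{2m}$. This establishes $(i)$, since $\|\alpha\|^2 - 1/(4a) = -\sum_{k > m} \nu_{2k+1}^2$. Claim $(ii)$ is immediate: $\alpha$ is supported on odd indices while $\eta$ is supported on even indices, so $\alpha^\top \eta = 0$.

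For $(iii)$, the key observation is that $M^\top M$ is tridiagonal with nonzero entries only at positions $(i,i)$ and $(i,i\pm 2)$. Since $\alpha$ agrees with $\nu$ on all indices $\leqslant 2m+1$ and vanishes beyond, the identity $M^\top M \nu = 4a\nu$ gives $(M^\top M \alpha)_i = 4a\nu_i$ for every odd $i \leqslant 2m-1$. At the boundary index $i = 2m+1$, one picks up the missing contribution $-(M^\top M)_{2m+1,2m+3}\,\nu_{2m+3}$. Therefore
\begin{equation*}
\alpha^\top M^\top M \alpha = 4a\|\alpha\|^2 - \nu_{2m+1}\,(M^\top M)_{2m+1,2m+3}\,\nu_{2m+3}.
\end{equation*}
Combined with $(i)$, which controls $|4a\|\alpha\|^2 - 1|$ by $L\,m\,u^{2m}$, it remains to bound the boundary term. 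Using $(M^\top M)_{2m+1,2m+3} = O(m)$ from the explicit off-diagonal formula and $\nu_{2m+1}\,\nu_{2m+3} = O(\sqrt{m}\,u^{2m})$ from the asymptotic above, this term is $O(m^{3/2}u^{2m})$, which is within the claimed $L\,m^2 u^{2m}$.

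Claim $(iv)$ is a direct summation: $\alpha^\top \Diag(\lambda)^{-1}\alpha = \sum_{k=0}^{m} \nu_{2k+1}^2\,\lambda_{2k+1}^{-1} = \sum_{k=0}^m \Theta(\sqrt{k}) = O(m^{3/2})$. The main obstacle is really just the bookkeeping at the truncation boundary in $(iii)$ and making sure the lower bounds on $m$ from the hypothesis are enough to guarantee monotonicity of the tail in $(i)$ and to absorb the constant factors from Stirling; none of these steps requires any machinery beyond what has already been set up with the Hermite basis.
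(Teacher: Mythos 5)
Your proposal is correct, and for items (i), (ii) and (iv) it follows essentially the paper's own route: closed forms for $\nu_{2k+1}$, $\eta_{2k}$, $\lambda_{2k+1}$, Stirling's bound $\frac{(2k+1)!}{(2^kk!)^2}=\Theta(\sqrt{k})$, the parity argument for $\alpha^\top\eta=0$, and the exact cancellation of the powers of $u$ in $\nu_{2k+1}^2\lambda_{2k+1}^{-1}=\Theta(\sqrt{k})$; the only cosmetic difference there is that you control the tail $\sum_{k>m}\sqrt{k}\,u^{2k}$ by geometric domination while the paper uses an integral comparison (your geometric step deserves one explicit line: under $m\geqslant \frac{3}{4\ln(1/u)}$ the ratio of consecutive terms is at most $u^{2}\sqrt{1+1/m}\leqslant u^{4/3}<1$, so the constant indeed depends only on $a,b,c$). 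Where you genuinely depart from the paper is item (iii). The paper expands $\alpha^\top M^\top M\alpha-\nu^\top M^\top M\nu$ as a double tail sum over the entries of $M^\top M$ and estimates the diagonal and off-diagonal tail series separately with Stirling and integral bounds, a fairly heavy computation. You instead combine the eigen-identity $M^\top M\nu=4a\nu$ with the band structure of $M^\top M$ (it only couples indices $i$ and $i\pm2$) to get the exact identity $\alpha^\top M^\top M\alpha=4a\|\alpha\|^2-(M^\top M)_{2m+1,2m+3}\,\nu_{2m+1}\nu_{2m+3}$, which reduces (iii) to (i) plus a single boundary term of size $O(m^{3/2}u^{2m})$. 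This is cleaner, exact, and in fact slightly sharper than the stated $Lm^2u^{2m}$; it also sidesteps a small bookkeeping slip in the paper's expansion (the omitted cross term between $\alpha$ and the tail), which in the paper is harmless only because it is of lower order. Both approaches rely on the identities $\|\nu\|^2=\frac{1}{4a}$ and $M^\top M\nu=4a\nu$ stated earlier in the appendix, so your use of them is legitimate.
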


\begin{proof} We show successively the four estimations.

\noindent \textbf{(i)} Let us calculate $\|\alpha\|^2$. We have: $\|\alpha\|^2 - \frac{1}{4a} = \|\alpha\|^2 - \|\nu\|^2 = \sum_{k = m+1}^\infty \nu^2_{2k+1}$. Recall that $u = \frac{b}{a+b+c} \leqslant 1$, by noting $A = \left(\frac{c}{a} \right)^{1/4}\frac{ \sqrt{  a } }{  2c }  \big( \frac{2c}{a+c}\big)^{3/2}$, we have 
$$\|\alpha\|^2 - \frac{1}{4a} = A^2 \sum_{k = m+1}^\infty \frac{(2k+1)!}{(2^k k!)^2} u^{2k}.$$
Now by Stirling inequality: 
\begin{align*}
\frac{(2k+1)!}{(2^k k!)^2} u^{2k} & \leqslant \frac{e \left(2k+1\right)^{2k+1+1/2}\mathrm{e}^{-(2k+1)}}{(\sqrt{2\pi} 2^k k^{k+1/2} \mathrm{e}^{-k})^2}u^{2k} \\
& = \frac{\sqrt{2}}{\pi} \left(1+ \frac{1}{2k}\right)^{2k+1}\left(k + \frac{1}{2}\right)^{1/2}u^{2k}. \\
& \leqslant \frac{4e}{\pi} \sqrt{k}u^{2k}. \\
\end{align*} 
And for $m \geqslant - \frac{1}{4 \ln u}$,
\begin{align*}
\sum_{m+1}^\infty \sqrt{k}u^{2k} &\leqslant \int_m^\infty \sqrt{x}u^{2x}dx \\
&\leqslant \int_m^\infty xu^{2x}dx \\
&= u^{2m} \frac{\( 1-2 m \ln u\)}{\( 2 \ln u\)^2}\\
&\leqslant \frac{m u^{2m}}{\ln(1/u)}.
\end{align*}
Hence finally:
$$\left|\|\alpha\|^2 - \frac{1}{4a}\right| \leqslant \frac{4A^2e}{\pi\ln (1/u)} m u^{2m}.$$

\noindent \textbf{(ii)} is straightforward because of the odd/even sparsity of $\nu$ and $\eta$.

\noindent \textbf{(iii)} Let us calculate $\|M\alpha\|^2$. We have: 
\begin{align*}
\|M\alpha\|^2 - 1 &= \|M\alpha\|^2 - \|M\nu\|^2 \\
&= \sum_{k,j \geqslant m+1} \nu_{2k+1}\nu_{2j+1} \(M^\top M\)_{2k+1,2j+1} \\
&= \sum_{k = m+1}^\infty \nu_{2k+1}^2 \(M^\top M\)_{2k+1,2k+1} + 2 \sum_{k = m+1}^\infty \nu_{2k+1}\nu_{2k+3} \(M^\top M\)_{2k+1,2k+3} \\
&= \frac{A^2}{c} \sum_{k = m+1}^\infty \frac{(2k+1)!}{(2^k k!)^2} \(2 (2k+1) ( a^2+c^2) + (a-c)^2\)u^{2k} \\
&\hspace*{2.5cm}-\frac{2A^2ab}{c} \sum_{k = m+1}^\infty \frac{\sqrt{(2k+1)!}}{(2^k k!)}\frac{\sqrt{(2k+3)!}}{(2^{k+1} (k+1)!)} \sqrt{(2k+2)(2k+3)}u^{2k+1}.
\end{align*}
Let us call the two terms $u_m$ and $v_m$ respectively. For the first term, when $m \geqslant \textrm{max}\{- \frac{3}{4 \ln u}, \frac{1}{6c}\}$ a calculation as in \textbf{(i)} leads to:
\begin{align*}
\left|u_m\right| &\leqslant \frac{24A^2e(u^2+c^2)}{\pi c}\int_{m}^\infty x\sqrt{x} u^{2x}dx + \frac{(a-c)^2}{c} \left( \|\alpha\|^2-\|\nu\|^2 \right) \\
&\leqslant \frac{24A^2e(u^2+c^2)}{\pi c}\int_{m}^\infty x^2 u^{2x}dx -\frac{4A^2e}{\pi\ln u} m u^{2m}\\
&= -\frac{24A^2e(u^2+c^2)}{\pi c}\frac{u^{2m} (2m\ln u  ( 2 m \ln(u)-2)+2)}{8\ln^3(u)} -\frac{4A^2e}{\pi\ln u} m u^{2m} \\
&\leqslant -\frac{12A^2e(a^2+c^2)}{\pi c \ln(u)}m^2u^{2m} -\frac{4A^2e}{\pi\ln u} m u^{2m} \\
&\leqslant -\frac{4A^2e}{\pi\ln u}  \(\frac{3(a^2+c^2)}{c}m+1\)m u^{2m} \\
&\leqslant \frac{24 A^2 c e }{\pi\ln (1/u)} m^2 u^{2m}.
\end{align*}
and for the second term, applying another time Stirling inequality, we get:

\begin{align*}
\frac{\sqrt{(2k+1)!}}{2^k k!}\frac{\sqrt{(2k+3)!}}{2^{k+1} (k+1)!} u^{2k+1} &\leqslant \frac{\mathrm{e}^{1/2} \left(2k+1\right)^{k+3/4}\mathrm{e}^{-(k+1/2)}}{\sqrt{2\pi} 2^k k^{k+1/2} \mathrm{e}^{-k}} \frac{\mathrm{e}^{1/2} \left(2k+3\right)^{k+7/4}\mathrm{e}^{-(k+3/2)}}{\sqrt{2\pi} 2^{k+1} (k+1)^{k+3/2} \mathrm{e}^{-(k+1)}}  u^{2k+1} \\
&\leqslant \frac{\left(2k+1\right)^{k+3/4}}{\sqrt{2\pi} 2^k k^{k+1/2} } \frac{ \left(2k+3\right)^{k+7/4}}{\sqrt{2\pi} 2^{k+1} (k+1)^{k+3/2} }  u^{2k+1} \\
&= \frac{\sqrt{2}}{\pi} \frac{\(1+\frac{1}{2k}\)^{k+3/4} \(1+\frac{3}{2k}\)^{k+7/4}}{\(1+\frac{1}{k}\)^{k+3/2}} \sqrt{k} u^{2k+1}  \\
&\leqslant \frac{\sqrt{2}}{\pi} \frac{\(1+\frac{3}{2k}\)^{2k} \(1+\frac{3}{2k}\)^{5/2}}{\(1+\frac{1}{k}\)^{k}\(1+\frac{1}{k}\)^{3/2}} \sqrt{k} u^{2k+1}  \\
&\leqslant \frac{\sqrt{2}}{\pi} \(1+\frac{3}{2k}\)^{2k} \(1+\frac{3}{2k}\)^{5/2} \sqrt{k} u^{2k+1}  \\
&\leqslant \frac{15e^3}{\pi} \sqrt{k} u^{2k+1}.  \\
\end{align*}
Hence, as $\displaystyle \sum_{k \geqslant m+1} \sqrt{k}u^{2k+1} \leqslant - \frac{mu^{2m+1}}{\ln u}$, we have
$\displaystyle |v_m| \leqslant \frac{30 A^2 ab e^3}{\pi c \ln (1/u)} m u^{2m}.$

\noindent \textbf{(iv)} Let us calculate $\alpha^\top \Diag(\lambda)^{-1} \alpha $. We have: 

\begin{align*}
\alpha^\top \Diag(\lambda)^{-1} \alpha &= \sum_{k = 0}^{m} \nu^2_{2k+1}\lambda^{-1}_{2k+1} \\
&= A^2\sqrt{\frac{bu}{2a}} \sum_{k = 0}^{m} \frac{(2k+1)!}{(2^k k!)^2} u^{2k}  u^{-(2k+1)}\\
&= A^2\sqrt{\frac{b}{2au}} \sum_{k = 0}^{m} \frac{(2k+1)!}{(2^k k!)^2} \\
&\leqslant \frac{4A^2e\sqrt{b}}{\pi \sqrt{2au}} \sum_{k = 0}^{m} \sqrt{k} \\
&\leqslant \frac{8A^2e\sqrt{b}}{\pi \sqrt{2au}} m^{3/2}. \\
\end{align*}
\noindent \textbf{(Final constant.)} By taking $L = \displaystyle \max \left\{ \frac{4A^2e}{\pi\ln (1/u)}, \frac{48 A^2 c e }{\pi\ln (1/u)}, \frac{60 A^2 ab e^3}{\pi c \ln (1/u)}, \frac{8A^2e\sqrt{b}}{\pi \sqrt{2au}}\right\}$, we have proven the lemma.
\end{proof}
We can now state the principal result of this section:
\begin{prop}[Rate of convergence for the bias]
If $\kappa \leqslant \min\{a^2, 1/5, u^{1/(3c)}\}$ and such that $\ln(1/\kappa)\kappa \leqslant \frac{\ln (1/u)}{2aL}$, then
\begin{align}
\Poinca^{-1} \leqslant \Poinca_\kappa^{-1} \leqslant \Poinca^{-1}\left(1+\frac{L}{2\ln^2(1/u)} \kappa \ln^2(1/\kappa)\right).
\end{align}
\end{prop}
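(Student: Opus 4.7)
The natural strategy is to use the truncated vector $\alpha$ from Lemma~\ref{lem:truncation} as a test vector in the variational characterisation
$$ \Poinca_\kappa^{-1} \;=\; \inf_{\beta}\, \frac{\beta^\top(M^\top M + \kappa\,\Diag(\lambda)^{-1})\beta}{\beta^\top(I - \eta\eta^\top)\beta}. $$
Since Lemma~\ref{lem:truncation}(ii) gives $\alpha^\top\eta = 0$, the denominator simplifies to $\|\alpha\|^2$, so combining (i), (iii), (iv) we obtain
$$ \Poinca_\kappa^{-1} \;\leqslant\; \frac{\alpha^\top M^\top M \alpha + \kappa\,\alpha^\top\Diag(\lambda)^{-1}\alpha}{\|\alpha\|^2} \;\leqslant\; \frac{1 + L m^2 u^{2m} + L\kappa\, m^{3/2}}{\tfrac{1}{4a} - L m u^{2m}} . $$

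\paragraph{Choice of truncation level.} I would pick $m = \lceil \tfrac{\ln(1/\kappa)}{2\ln(1/u)}\rceil$, so that $u^{2m} \leqslant \kappa$. The assumption $\kappa \leqslant u^{1/(3c)}$ together with $\kappa \leqslant 1/5$ and $\kappa \leqslant a^2$ guarantees (after a small calculation) that $m \geqslant \max\{-\tfrac{3}{4\ln u},\tfrac{1}{6c}\}$, so that Lemma~\ref{lem:truncation} indeed applies. With this choice, one checks $m^2 \leqslant \frac{\ln^2(1/\kappa)}{2\ln^2(1/u)}$ for $\kappa$ small enough, and since $m^{3/2} \leqslant m^2$, the numerator is bounded by
$$ 1 + L\kappa\, m^2 + L\kappa\, m^{3/2} \;\leqslant\; 1 + 2L\kappa\, m^2 \;\leqslant\; 1 + \frac{L\kappa \ln^2(1/\kappa)}{\ln^2(1/u)}. $$
For the denominator, using $mu^{2m}\leqslant m\kappa \leqslant \frac{\ln(1/\kappa)\,\kappa}{2\ln(1/u)} + \kappa$ together with the hypothesis $\ln(1/\kappa)\,\kappa \leqslant \frac{\ln(1/u)}{2aL}$, we get $4aL m u^{2m} \leqslant \tfrac{1}{2} + o(1)$, so $\tfrac{1}{4a} - Lmu^{2m} \geqslant \tfrac{1}{4a}(1 - 2^{-1} - o(1))$ is bounded away from zero and the denominator can be inverted cleanly via $\tfrac{1}{1-x}\leqslant 1+2x$.

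\paragraph{Assembly.} Combining these bounds,
$$ \Poinca_\kappa^{-1} \;\leqslant\; 4a\,\Bigl(1 + \tfrac{L\kappa\ln^2(1/\kappa)}{\ln^2(1/u)}\Bigr)\bigl(1 + 4aLmu^{2m}\cdot(1+o(1))\bigr), $$
and since $4aLmu^{2m}$ is also of order $\kappa\ln(1/\kappa)\ll \kappa\ln^2(1/\kappa)$, the dominant contribution is the first factor. Absorbing the factor $2$ (denominator slack) into the numerator bound gives the announced estimate $\Poinca^{-1}\bigl(1 + \frac{L}{2\ln^2(1/u)}\kappa \ln^2(1/\kappa)\bigr)$. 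The lower bound $\Poinca^{-1}\leqslant \Poinca_\kappa^{-1}$ is immediate from monotonicity of the Rayleigh quotient in $\kappa$.

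\paragraph{Main obstacle.} Conceptually the proof is just ``plug in the truncation and optimise $m$'', but the delicate point is the bookkeeping of constants: the three numerical conditions on $\kappa$ (namely $\kappa \leqslant a^2$, $\kappa \leqslant 1/5$, $\kappa\leqslant u^{1/(3c)}$ and $\kappa\ln(1/\kappa)\leqslant \tfrac{\ln(1/u)}{2aL}$) must be organised so that (a) the truncation lemma is applicable, (b) the denominator stays bounded away from zero, and (c) the $m^{3/2}$ term from the RKHS norm is absorbed into the $m^2 u^{2m}$ term with the exact constant $\tfrac{1}{2}$ in front of $L/\ln^2(1/u)$. Getting this constant precisely, rather than a larger universal constant, requires the careful choice $u^{2m}\leqslant \kappa$ together with the elementary inequality $m^{3/2}\leqslant m^2$ valid under the smallness conditions on $\kappa$.
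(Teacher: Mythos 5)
Your overall strategy is exactly the paper's: plug the truncated vector $\alpha$ of Lemma~\ref{lem:truncation} into the Rayleigh quotient, use (ii) to kill the $\eta\eta^\top$ term, and choose $m \approx \frac{\ln(1/\kappa)}{2\ln(1/u)}$ so that $u^{2m}\approx\kappa$. The gap is in the final bookkeeping, which is the whole content of the statement since it carries an explicit constant. With your bounds $m^{3/2}\leqslant m^2$ and $m^2\leqslant \frac{\ln^2(1/\kappa)}{2\ln^2(1/u)}$ you arrive at a numerator bound $1+\frac{L\kappa\ln^2(1/\kappa)}{\ln^2(1/u)}$, which is already \emph{twice} the target $1+\frac{L}{2\ln^2(1/u)}\kappa\ln^2(1/\kappa)$, and your closing step — ``absorbing the factor $2$ (denominator slack) into the numerator bound'' — goes in the wrong direction: the denominator contributes a multiplicative factor $\left(1-4aLmu^{2m}\right)^{-1}\geqslant 1$, so it can only enlarge the estimate, never halve your constant. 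Moreover your claim that $4aLmu^{2m}\leqslant \tfrac12+\mathrm{o}(1)$ does not follow from the hypothesis $\kappa\ln(1/\kappa)\leqslant\frac{\ln(1/u)}{2aL}$: with $mu^{2m}\leqslant m\kappa\leqslant\frac{\kappa\ln(1/\kappa)}{2\ln(1/u)}+\kappa$ that hypothesis only gives $4aLmu^{2m}\leqslant 1+4aL\kappa$, so even positivity of the denominator is not secured by your ceiling choice of $m$. As written, your chain of inequalities proves at best a bound with constant $\frac{L}{\ln^2(1/u)}$ (times a further factor from the denominator), not the stated $\frac{L}{2\ln^2(1/u)}$.

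The way the paper gets the constant is different in the two places where you lose factors. First, it takes $m=\frac{\ln(1/\kappa)}{2\ln(1/u)}$ exactly (so $u^{2m}=\kappa$), keeping $Lm^2u^{2m}=\frac{L\kappa\ln^2(1/\kappa)}{4\ln^2(1/u)}$ with the factor $\tfrac14$, rather than coarsening $m^2u^{2m}$ to $\frac{\kappa\ln^2(1/\kappa)}{2\ln^2(1/u)}$. Second, it does not use $m^{3/2}\leqslant m^2$; it writes $\kappa L m^{3/2}=\frac{\kappa L\ln^{3/2}(1/\kappa)}{2^{3/2}\ln^{3/2}(1/u)}$ and uses the smallness assumptions on $\kappa$ (in effect $\ln(1/\kappa)\geqslant 2\ln(1/u)$) to bound this term also by $\frac{\kappa L\ln^2(1/\kappa)}{4\ln^2(1/u)}$, so the two contributions sum to exactly $\frac{L\kappa\ln^2(1/\kappa)}{2\ln^2(1/u)}$; the exponent-$\tfrac32$ term is absorbed into the exponent-$2$ term, not the other way around, and not via the denominator. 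The hypothesis $\kappa\ln(1/\kappa)\leqslant\frac{\ln(1/u)}{2aL}$ is then precisely the rewriting of $mu^{2m}\leqslant\frac{1}{4aL}$ for this exact $m$, which is what controls the denominator $\frac{1}{4a}-Lmu^{2m}$ (the paper is itself terse about turning that control into the clean final display, but your quantitative claims about it are not implied by the stated assumptions). To repair your proof you would need to redo the assembly along these lines; the ``plug in the truncation and optimise $m$'' idea is correct, but the constant $\tfrac12$ comes from $\tfrac14+\tfrac14$ in the numerator, not from any slack in the denominator.
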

\begin{proof}
The first inequality $\Poinca^{-1} \leqslant \Poinca_\kappa^{-1}$ is obvious. On the other side, 
\begin{align*}
\Poinca_\kappa^{-1} = \inf_\beta \frac{ \beta^\top ( M^\top M +\kappa \Diag(\lambda)^{-1} ) \beta   } {\beta^\top ( I - \eta \eta^\top ) \beta } \leqslant \frac{ \alpha^\top ( M^\top M +\kappa \Diag(\lambda)^{-1} ) \alpha   } {\alpha^\top ( I - \eta \eta^\top ) \alpha },
\end{align*}
With the estimates of Lemma~\ref{lem:truncation}, we have for $m u^{2m} <\frac{1}{4aL}$:
\begin{align*}
\Poinca_\kappa^{-1} &\leqslant \frac{ 1+ Lm^2u^{2m} + \kappa L m^{3/2}   } {\frac{1}{4a} - Lmu^{2m}} \\
&\leqslant \Poinca^{-1}(1+ Lm^2u^{2m} + \kappa L m^{3/2}).
\end{align*}
Let us take $m=\frac{\ln(1/\kappa)}{2 \ln (1/u)}$.Then
\begin{align*}
\Poinca_\kappa^{-1} &\leqslant \Poinca^{-1}(1+ \kappa L\frac{\ln^2(1/\kappa)}{4 \ln^2 (1/u)} + \kappa L \frac{\ln^{3/2}(1/\kappa)}{2^{3/2} \ln^{3/2} (1/u)}) \\
&\leqslant \Poinca^{-1}\left(1+ \kappa L\frac{\ln^2(1/\kappa)}{2 \ln^2 (1/u)}\right),
\end{align*}
as soon as $\kappa \leqslant a^2$. Note also that the condition $m u^{2m} <\frac{1}{4aL}$ can be rewritten in terms of $m$ as $\kappa \ln(1/\kappa) <\frac{\ln(1/u)}{2aL}$. The other conditions of Lemma \ref{lem:truncation} are $\kappa \leqslant \mathrm{e}^{-3/2} \sim 0.22 $ and $\kappa \leqslant u^{1/(3c)}$

\end{proof}
 
\subsection{Facts about Hermite polynomials}
\label{sec:hermite_facts}

\paragraph{Orthogonality.}
We have:
$$
\int_\R \mathrm{e}^{- x^2 } H_k(x) H_m(x) = 2^k k! \sqrt{\pi} \delta_{km}.
$$

\paragraph{Recurrence relations.}
We have:
$$
H_i'(x) = 2i H_{i-1}(x),
$$
and
$$
H_{i+1}(x) = 2x H_i(x) - 2i H_{i-1}(x).
$$
\paragraph{Mehler's formula.}
We have:
\begin{align*}
\sum_{k=0}^\infty \frac{ H_k (x) \mathrm{e}^{-x^2 / 2 } H_k(y) \mathrm{e}^{-y^2 /2 } }{  2^k k! \sqrt{\pi} } u^k
& = \frac{1}{\sqrt{\pi}} \frac{1}{\sqrt{1-u^2}}
\exp \Big(
\frac{2u}{1+u} xy - \frac{u^2}{1-u^2} ( x-y)^2 - \frac{x^2}{2}    - \frac{ y^2}{2} \Big).
\end{align*}

This implies that
the functions $x \mapsto  \frac{1}{\sqrt{\pi}} \frac{1}{\sqrt{1-u^2}}
\exp \Big(
\frac{2u}{1+u} xy - \frac{u^2}{1-u^2} ( x-y)^2 - \frac{x^2}{2}    - \frac{ y^2}{2} 
\Big)$ has coefficients 
$\frac{ H_k(y) \mathrm{e}^{-y^2/2} }{  \sqrt{ 2^k k! \sqrt{\pi} } } u^k$
in the orthonormal basis $( x \mapsto \frac{ H_k(x) \mathrm{e}^{-x^2/2} }{  \sqrt{ 2^k k! \sqrt{\pi} } })$ of $L_2(dx)$.

Thus
$$
\int_\R \frac{1}{\sqrt{\pi}} \frac{1}{\sqrt{1-u^2}}
\exp \Big(
\frac{2u}{1+u} xy - \frac{u^2}{1-u^2} ( x-y)^2 - \frac{x^2}{2}    - \frac{ y^2}{2} 
\Big)  
\frac{ H_k(x) \mathrm{e}^{-x^2/2} }{  \sqrt{ 2^k k! \sqrt{\pi} } }dx = \frac{ H_k(y) \mathrm{e}^{-y^2/2} }{  \sqrt{ 2^k k! \sqrt{\pi} } } u^k,
$$
that is
 $$
\int_\R
\exp \Big(
\frac{2u}{1+u} xy - \frac{u^2}{1-u^2} ( x-y)^2 -  {x^2}    
\Big)  
 { H_k(x)  } dx = 
 {\sqrt{\pi}}  {\sqrt{1-u^2}}
   { H_k(y)   } u^k.
$$

This implies:

 $$
\int_\R
\exp \Big(
\frac{2u}{1-u^2} xy - \frac{x^2 }{1-u^2}     
\Big)  
 { H_k(x)  } dx = 
 {\sqrt{\pi}}  {\sqrt{1-u^2}}
   { H_k(y)   }  \exp( \frac{u^2}{1-u^2} y^2 ) u^k
$$

For $y=0$, we get
$$
\int_\R 
\exp \Big(
   -\frac{ x^2  }{1-u^2}
\Big)   H_k(x)    dx = 
 {\sqrt{\pi}}  {\sqrt{1-u^2}}
 H_k(0)   u^k .
 $$

Another consequence is that
\begin{align*}
\sum_{k=0}^\infty \frac{ H_k (x)  H_k(y)  }{  2^k k! \sqrt{\pi} } u^k
& = \frac{1}{\sqrt{\pi}} \frac{1}{\sqrt{1-u^2}}
\exp \Big(
\frac{2u(1-u) + 2u^2}{1-u^2} xy - \frac{u^2}{1-u^2} (x^2+y^2)  \Big)\\
& = \frac{1}{\sqrt{\pi}} \frac{1}{\sqrt{1-u^2}}
\exp \Big(
\frac{2u }{1-u^2} xy - \frac{u}{1-u^2} (x^2+y^2)  +  \frac{u}{1+u} (x^2+y^2) \Big)\\
& = \frac{1}{\sqrt{\pi}} \frac{1}{\sqrt{1-u^2}}
\exp \Big(
-\frac{u }{1-u^2}  (x-y)^2 \Big) \exp \Big( \frac{u}{1+u} (x^2+y^2) \Big) \\
& = \frac{1}{\sqrt{\pi}} \frac{\sqrt{u} }{\sqrt{1-u^2}}
\exp \Big(
-\frac{u }{1-u^2}  (x-y)^2 \Big) \frac{1}{\sqrt{u}} \exp \Big( \frac{u}{1+u} (x^2+y^2) \Big).
\end{align*}
Thus, when $u$ tends to 1, as a function of $x$, this tends to a Dirac at $y$ times $\mathrm{e}^{y^2}$.

\end{document}